\definecolor{MyDarkblue}{rgb}{0,0.08,0.50}
\definecolor{Brickred}{rgb}{0.65,0.08,0}
\newtheorem*{theorem*}{Theorem}
\newtheorem{theorem}{Theorem}[section]
\newtheorem{lemma}[theorem]{Lemma}
\newtheorem{proposition}[theorem]{Proposition}
\newtheorem{corollary}[theorem]{Corollary}
\theoremstyle{definition}
\newtheorem{definition}[theorem]{Definition}
\newtheorem{assumption}[theorem]{Assumption}
\newtheorem{remark}[theorem]{Remark}
\newtheorem{example}[theorem]{Example}
\renewcommand{\P}{\mathbb{P}}
\newcommand{\Pv}{\mathbb{P}}
\newcommand{\eps}{\varepsilon}
\newcommand{\s}[1]{\sum_{#1}}
\newcommand{\cA}{\mathcal{A}}
\newcommand{\cN}{\mathcal{N}}\newcommand{\cO}{\mathcal{O}}
\newcommand{\Var}{{\rm Var}}
\newcommand{\CE}{{\mathcal{E}}}
\newcommand{\e}{{\mathrm e}}
\newcommand{\R}{\mathbb{R}}
\newcommand{\N}{\mathbb{N}}
\newcommand{\Z}{\mathbb{Z}}
\renewcommand{\emptyset}{\varnothing}
\newcommand{\CA}{\mathcal {A}}
\newcommand{\CB}{\mathcal {B}}
\newcommand{\CD}{\mathcal {D}}
\newcommand{\CL}{\mathcal {L}}
\newcommand{\CM}{\mathcal {M}}
\newcommand{\CN}{\mathcal {N}}
\newcommand{\CP}{\mathcal {P}}
\newcommand{\CX}{\mathcal {X}}
\newcommand*{\wt}{\widetilde}
\newcommand*{\be}{\begin{equation}}
	\newcommand*{\ee}{\end{equation}}
\newcommand*{\ba}{\begin{aligned}}
	\newcommand*{\ea}{\end{aligned}}
\newcommand*{\barr}{\begin{array}{c}}
	\newcommand*{\earr}{\end{array}}
\def \toinp    {\buildrel {\Pv}\over{\longrightarrow}}
\def \toindis  {\buildrel {d}\over{\longrightarrow}}
\def \toas     {\buildrel {a.s.}\over{\longrightarrow}}
\newcommand*{\ind}{\mathbbm{1}}
\def\namedlabel#1#2{\begingroup
	#2%
	\def\@currentlabel{#2}%
	\phantomsection\label{#1}\endgroup
}
\newcommand{\bes}{\begin{equation*}}
	\newcommand{\ees}{\end{equation*}}
\renewcommand{\P}[1]{\mathbb{P}\!\left(#1\right)}
\newcommand{\E}[1]{\mathbb{E}\left[#1\right]}
\newcommand{\F}{W}
\newcommand{\G}{\mathcal{G}}
\newcommand{\Zm}{\mathcal{Z}}
\renewcommand{\N}{\mathbb{N}}
\newcommand{\Ef}[2]{\mathbb{E}_\F#1[#2#1]}
\newcommand{\Pf}[1]{\mathbb{P}_\F\!\left(#1\right)}
\renewcommand{\d}{\mathrm{d}}
\newcommand{\zni}{\Zm_n(i)}
\newcommand{\inn}{i\in[n]}
\numberwithin{equation}{section}
\renewcommand{\e}{\mathrm{e}}
\newcommand{\leqnomode}{\tagsleft@true\let\veqno\@@leqno}
\newcommand{\reqnomode}{\tagsleft@false\let\veqno\@@eqno}
\newlength{\tagmarginsep} 
\tikzstyle{vertex}=[circle,fill=orange!60,minimum size=10pt,inner sep=0pt]
\tikzstyle{tedge} = [draw,ultra thick,->,>=stealth, orange]
\tikzstyle{esq}=[circle,fill=white,minimum size=10pt,inner sep=0pt]
\tikzstyle{up}=[<-,>=stealth]
\begin{document}
	
	\title[Location of high-degree vertices in WRG with bounded weights]{The location of high-degree vertices in weighted recursive graphs with bounded random weights}

	\date{\today}
	\keywords{Weighted recursive graph, Weighted recursive tree, Maximum degree, High degree, Random environment, Persistence, Vertex depth, Marked point processes.}
	
	\author[Lodewijks]{Bas Lodewijks}
	\address{Universit\"at Augsburg, Universit\"atsstr. 14, 86159 Augsburg, Germany}
	\email{bas.lodewijks@uni-a.de}

	\begin{abstract}
		We study the asymptotic growth rate of the labels of high-degree vertices in weighted recursive graphs (WRG) when the weights are independent, identically distributed, almost surely bounded random variables, and as a result confirm a conjecture by Lodewijks and Ortgiese~\cite{LodOrt21}. WRGs are a generalisation of the random recursive tree (RRT) and directed acyclic graph model (DAG), in which vertices are assigned vertex-weights and where new vertices attach to $m\in\N$ predecessors, each selected independently with a probability proportional to the vertex-weight of the predecessor.  Prior work established the asymptotic growth rate of the maximum degree of the WRG model and here we show that there exists a critical exponent $\mu_m$, such that the typical label size of the maximum degree vertex equals $n^{\mu_m(1+o(1))}$ almost surely as $n$, the size of the graph, tends to infinity. These results extend and improve on the asymptotic behaviour of the location of the maximum degree, formerly only known for the RRT model, to the more general weighted multigraph case of the WRG model. Moreover, for the Weighted Recursive Tree (WRT) model, that is, the WRG model with $m=1$, we prove the joint convergence of the rescaled degree and label of high-degree vertices under additional assumptions on the vertex-weight distribution, and also extend results on the growth rate of the maximum degree obtained by Eslava, Lodewijks and Ortgiese~\cite{EslLodOrt21}. 
	\end{abstract}
	
	\maketitle 
	
	\section{Introduction}\label{sec:intr}
	
	The Weighted Recursive Graph model (WRG) is a weighted multigraph generalisation of the random recursive tree model in which each vertex has a (random) weight and out-degree $m\in\N$. The graph process $(\G_n,n\in\N)$ is initialised with a single vertex $1$ with vertex-weight $W_1$, and at every step $n\geq 2$ vertex $n$ is assigned vertex-weight $W_n$ and $m$ half-edges and is added to the graph. Conditionally on the weights, each half-edge is then independently connected to a vertex $i$ in $\{1,\ldots,n-1\}$ with probability $W_i/\sum_{j=1}^{n-1}W_j$. The case $m=1$ yields the Weighted Recursive Tree model (WRT), first introduced by Borovkov and Vatutin~\cite{BorVat06,BorVat206}. In this paper we are interested in the \emph{asymptotic behaviour of the vertex labels} of vertices that attain the \emph{maximum degree} in the graph, when the vertex-weights are \emph{i.i.d.\ bounded random variables}. This was formerly only known for the random recursive tree model~\cite{BanBha20}, a special case of the WRT which is obtained when $W_i=1$ for all $i\in\N$. 
	
	After the introduction of the WRT model by Borovkov and Vatutin, Hiesmayr and I\c slak studied the height, depth and size of the tree branches of this model. Mailler and Uribe Bravo~\cite{MaiBra19}, as well as S\'enizergues~\cite{Sen19} and S\'enizergues and Pain~\cite{PainSen21,PainSen22} studied the weighted profile and height of the WRT model. Mailler and Uribe Bravo consider random vertex-weights with particular distributions, whereas S\'enizergues and Pain allow for a more general model with both sequences of deterministic as well as random weights.
	
	Iyer~\cite{Iyer20} and the more general work by Fountoulakis and Iyer~\cite{FouIyer21} study the degree distribution of a large class of evolving weighted random trees, of which the WRT model is a particular example, and Lodewijks and Ortgiese~\cite{LodOrt21} study the degree distribution of the WRG model. In both cases, an almost sure limiting degree distribution for the empirical degree distribution is identified. Lodewijks and Ortgiese~\cite{LodOrt21} also study the maximum degree and the labels of the maximum degree vertices of the WRG model for a large range of vertex-weight distributions. In particular, we distinguish two main cases in the behaviour of the maximum degree: when the vertex-weight distribution has unbounded support or bounded support. In the former case the behaviour and size of the label of maximum degree vertices is mainly controlled by a balance of vertices being old (i.e.\  having a small label) and having a large vertex-weight. In the latter case, due to the fact that the vertex-weights are bounded, the behaviour is instead controlled by a balance of vertices being old and having a degree which significantly exceeds their expected degree.
	
	Finally, Eslava, Lodewijks and Ortgiese~\cite{EslLodOrt21} describe the asymptotic behaviour of the maximum degree in the WRT model in more detail (compared to~\cite{LodOrt21}) when the vertex-weights are i.i.d.\ bounded random variables, under additional assumptions on the vertex-weight distribution. In particular, we outline several classes of vertex-weight distributions for which different higher-order behaviour is observed. 
	
	In this paper we identify the growth rate of the labels of vertices that attain the maximum degree, assuming only that the vertex-weights are almost surely bounded. If we set
	\be 
	\theta_m:=1+\E W/m \text{ and } \mu_m:=1-(\theta_m-1)/(\theta_m\log\theta_m),
	\ee 
	we show that the labels of vertices that attain the maximum degree are \emph{almost surely} of the order $n^{\mu_m(1+o(1))}$. This confirms a conjecture by Lodewijks and Ortgiese~\cite[Conjecture $2.11$]{LodOrt21}, improves a recent result of Banerjee and Bhamidi~\cite{BanBha20} for the location of the maximum degree in the random recursive tree model (which is obtained by setting $\E W=1,m=1$ so that $\mu_1=1-1/(2\log 2)$) from convergence in probability to almost sure convergence, and extends their result to the WRG model. Furthermore, for the WRT model, that is the case $m=1$, under an additional assumption on the vertex-weight distribution, we are able to provide a central limit theorem for the rescaled labels of uniform vertices $v_1,\ldots, v_k$ with $k\in\N$, conditionally on the event that the in-degree of vertex $v_i$ is at least $d_i$ for each $i\in[k]$, for a range of values of the $d_i$. Finally, for several specific cases of vertex-weight distribution, we prove the joint convergence of the rescaled degree and label of high-degree vertices to a marked point process. The points in this marked point process are defined in terms of a Poisson point process on $\R$ and the marks are Gaussian random variables. These additional assumptions on the vertex-weight distribution are similar to the assumptions made by Eslava, Lodewijks and Ortgiese in~\cite{EslLodOrt21} to provide higher-order asymptotic results for the growth rate of the maximum degree in the WRT model, but relax a particular technical condition used in~\cite{EslLodOrt21}, and our results allow for an extension of their results as well.
	
	\textbf{Notation.}	Throughout the paper we use the following notation: we let $\N:=\{1,2,\ldots\}$ denote the natural numbers, set $\N_0:=\{0,1,\ldots\}$ to include zero and let $[t]:=\{i\in\N: i\leq t\}$ for any $t\geq 1$. For $x\in\R$, we let $\lceil x\rceil:=\inf\{n\in\Z: n\geq x\}$ and $\lfloor x\rfloor:=\sup\{n\in\Z: n\leq x\}$. For $x\in \R, k\in\N$, we let $(x)_k:=x(x-1)\cdots (x-(k-1))$ and $(x)_0:=1$ and use the notation $\bar d$ to denote a $k$-tuple $d=(d_1,\ldots, d_k)$ (the size of the tuple will be clear from the context), where the $d_1,\ldots, d_k$ are either numbers or sets. For sequences $(a_n,b_n)_{n\in\N}$ such that $b_n$ is positive for all $n$ we say that $a_n=o(b_n), a_n=\omega(b_n), a_n\sim b_n, a_n=\mathcal{O}(b_n)$ if $\lim_{n\to\infty} a_n/b_n=0,\lim_{n\to\infty} |a_n|/b_n=\infty, \lim_{n\to\infty} a_n/b_n=1$ and if there exists a constant $C>0$ such that $|a_n|\leq Cb_n$ for all $n\in\N$, respectively. For random variables $X,(X_n)_{n\in\N}$ we let $X_n\toindis X, X_n\toinp X$ and $X_n\toas X$ denote convergence in distribution, probability and almost sure convergence of $X_n$ to $X$, respectively. We let $\Phi:\R\to(0,1)$ denote the cumulative density function of a standard normal random variable and for a set $B\subseteq \R$ we abuse this notation to also define $\Phi(B):=\int_B \phi(x)\,\d x$, where $\phi(x):= \Phi'(x)$ denotes the probability density function of a standard normal random variable. It will be clear from the context which of the two definitions is to be applied. Finally, we use the conditional probability measure $\Pf{\cdot}:=\mathbb{P}(\,\cdot\, |(\F_i)_{i\in\N})$ and conditional expectation $\Ef{}{\cdot}:=\E{\,\cdot\,|(\F_i)_{i\in\N}}$, where the $(W_i)_{i\in\N}$ are the i.i.d.\ vertex-weights of the WRG model.
	
	\newpage
	
	\section{Definitions and main results}\label{sec:def}
	
	We define the weighted recursive graph (WRG) as follows:
	
	\begin{definition}[Weighted Recursive Graph]\label{def:wrg}
		Let $(\F_i)_{i\geq 1}$ be a sequence of i.i.d.\ copies of a random variable $\F$ such that $\P{\F>0}=1$, let $m\in\N$, and set
		\be 
		S_n:=\sum_{i=1}^n\F_i.
		\ee
		We construct the \emph{Weighted Recursive Graph} as follows:
		\begin{enumerate}
			\item[1)] Initialise the graph with a single vertex $1$, the root, and assign to the root a vertex-weight $\F_1$. We let $\G_1$ denote this graph. .
			\item[2)] For $n\geq 1$, introduce a new vertex $n+1$ and assign to it the vertex-weight $\F_{n+1}$ and $m$ half-edges. Conditionally on $\G_n$, independently connect each half-edge to some vertex $\inn$ with probability $\F_i/S_n$. Let $\G_{n+1}$ denote this graph.
		\end{enumerate}
		We treat $\G_n$ as a directed graph, where edges are directed from new vertices towards old vertices. Moreover, we assume throughout this paper that the vertex-weights are bounded almost surely.
	\end{definition}
	
	\begin{remark}\label{remark:def} $(i)$ Note that the edge connection probabilities remain unchanged if we multiply each weight by the same constant. In particular, we assume without loss of generality (in the case of bounded vertex-weights) that $x_0:=\sup\{x\in\R\,|\, \P{\F\leq x}<1\}=1$.
		
		$(ii)$ It is possible to extend the definition of the WRG to the case of \emph{random out-degree}. Namely, we can allow that vertex $n+1$ connects to $\emph{every}$ vertex $\inn$ independently with probability $\F_i/S_n$, and the results presented in this paper still hold under this extension.
	\end{remark}
	
	Throughout, for any $n\in\N$ and $\inn$, we write 
	\be 
	\zni:=\text{in-degree of vertex $i$ in }\G_n.
	\ee 
	This paper presents the asymptotic behaviour of the labels of high-degree vertices, the maximum degree vertices in particular. To that end, we define
	\be \label{eq:in}
	I_n:=\inf\{\inn: \zni\geq \Zm_n(j)\text{ for all }j\in[n]\}.
	\ee  
	We now present our first result, which confirms~\cite[Conjecture $2.11$]{LodOrt21}:
	
	\begin{theorem}[Labels of the maximum degree vertices]\label{thrm:bddloc}
		Consider the WRG model as in Definition~\ref{def:wrg} with vertex-weights $(W_i)_{i\in\N}$, which are i.i.d.\ copies of a positive random variable $W$ such that $x_0:=\sup\{x>0:\P{W\leq x}<1\}=1$. Let $\theta_m:=1+\E W/m$ and recall $I_n$ from~\eqref{eq:in}. Then, 
		\be 
		\frac{\log I_n}{\log n}\toas 1-\frac{\theta_m-1}{\theta_m\log\theta_m}=:\mu_m.
		\ee 		
	\end{theorem}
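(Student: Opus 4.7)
The natural approach is a matching upper-and-lower-bound argument driven by Poisson large-deviation estimates for the in-degrees $\Zm_n(i)$. Conditionally on the weight sequence, each half-edge at time $k>i$ attaches to vertex $i$ with probability $W_i/S_{k-1}$, so $\Zm_n(i)$ is a sum of independent $\mathrm{Bin}(m, W_i/S_{k-1})$ variables; by the strong law of large numbers, $S_k \sim k\,\E{W}$ uniformly, so for $i = \lfloor n^\alpha\rfloor$ with $W_i$ close to the essential supremum $1$, $\Zm_n(i)$ is approximately Poisson with mean $c(\alpha)\log n$, where $c(\alpha) := (1-\alpha)/(\theta_m - 1)$. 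Writing $\lambda := 1/\log\theta_m$ for the leading coefficient of $M_n := \max_i \Zm_n(i)$ (available a.s.\ from \cite{LodOrt21,EslLodOrt21}), Poisson tail estimates give $\P{\Zm_n(i) \geq \lambda \log n \mid W_i \approx 1} \approx n^{-r(\alpha)}$ with $r(\alpha) = c(\alpha) + \lambda\log(\lambda/c(\alpha)) - \lambda$, so the expected number of vertices $i \leq n^\alpha$ reaching this threshold scales as $n^{\alpha - r(\alpha)}$. The map $\alpha \mapsto \alpha - r(\alpha)$ is strictly concave (since $r$ is convex in $\alpha$) and vanishes uniquely at $\alpha = \mu_m$, pinning down the critical exponent: the first-order condition yields $\lambda/(1-\alpha) = \theta_m/(\theta_m - 1)$, and a direct check shows $r(\mu_m) = \mu_m$.

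For the lower bound $\liminf_{n\to\infty} \log I_n/\log n \geq \mu_m$ a.s., fix small $\epsilon, \delta > 0$. The quenched Chernoff bound, combined with $W_i \leq 1$ a.s., yields $\P{\Zm_n(i) \geq (\lambda - \delta)\log n} \leq n^{-r(\mu_m - \epsilon) + O(\delta)}$ uniformly over $i \leq n^{\mu_m - \epsilon}$. A union bound then gives that the event ``some vertex $i \leq n^{\mu_m - \epsilon}$ has degree at least $(\lambda - \delta)\log n$'' has probability at most $n^{(\mu_m - \epsilon) - r(\mu_m - \epsilon) + O(\delta)}$, whose exponent is strictly negative for $\delta$ small enough by concavity. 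Borel--Cantelli along $n_k = 2^k$, together with the monotonicity of $\Zm_n(i)$ in $n$ to interpolate, yields the corresponding a.s.\ statement for all $n$, and combined with the a.s.\ lower bound $M_n \geq (\lambda - \delta/2)\log n$ from prior work, rules out any vertex with label $\leq n^{\mu_m - \epsilon}$ from attaining $M_n$.

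For the upper bound $\limsup_{n\to\infty} \log I_n/\log n \leq \mu_m$ a.s., I would exhibit small-label vertices with high degree via a truncated second-moment method. Let $N_n$ count vertices $i \in (n^{\mu_m}, n^{\mu_m + \epsilon}]$ with $W_i \geq 1 - \delta$ (a positive-density event, since $x_0 = 1$ forces $\P{W \geq 1-\delta} > 0$ for every $\delta > 0$) and $\Zm_n(i) \geq \lceil(\lambda + \delta)\log n\rceil$. Stirling's formula applied to the Poisson/binomial pointwise tail gives $\E{N_n} \geq n^{c_0(\epsilon,\delta)}$ for some $c_0 > 0$, provided $\epsilon$ is chosen large enough compared to $\delta$. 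The degree indicators across distinct vertices are conditionally negatively associated (they compete for the same half-edges at every time step), so $\Var(N_n) = O(\E{N_n})$, and Chebyshev delivers $\P{N_n \geq 1} \to 1$. Promoting this to almost-sure existence uses Borel--Cantelli along $n_k = 2^k$ coupled with uniform control of the fluctuations of $M_n$ between consecutive $n_k$, which follows from the a.s.\ identification $M_n \sim \lambda \log n$.

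The principal obstacle lies in the upper bound. It requires Poisson \emph{lower-tail} estimates sharp to the leading exponential order, which in turn forces the restriction to vertices of weight in $[1-\delta, 1]$ and relies crucially on the assumption $x_0 = 1$, whereas the lower bound uses only uniform Chernoff \emph{upper} tails valid for all weights $\leq 1$. A secondary difficulty is the almost-sure upgrade itself: the event ``vertex $i$ attains $M_n$'' is not monotone in $n$, since both the degree of each vertex and the maximum degree evolve with $n$, so the subsequence argument must simultaneously track the candidate degrees and the evolution of $M_n$ along the subsequence and in between.
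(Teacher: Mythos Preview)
Your lower-bound argument (ruling out labels $\leq n^{\mu_m-\eps}$) is on the right track and matches the paper's approach: quenched Chernoff bounds plus a union bound over $i$, then Borel--Cantelli along $n_k=2^k$ with the monotonicity of degrees. One point glossed over is that the union bound is not a single term $n^{(\mu_m-\eps)-r(\mu_m-\eps)}$: you are summing over all $i\le n^{\mu_m-\eps}$, and the exponent $\alpha-r(\alpha)$ varies with the scale $\alpha=\log i/\log n$. The paper handles this by splitting the sum into $i\le n^\delta$, $i\ge n^{1-\delta}$, and the bulk $n^\delta\le i\le n^{1-\delta}$ (approximated by an integral in $\alpha$), and showing each piece is summable; your sketch needs this refinement.

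The upper-bound argument via a second-moment method has a genuine gap. You propose counting vertices $i\in(n^{\mu_m},n^{\mu_m+\eps}]$ with degree at least $(\lambda+\delta)\log n$ and claiming $\E{N_n}\to\infty$. But the rate function satisfies $r_\delta(\mu_m)>r_0(\mu_m)=\mu_m$ for any $\delta>0$ (indeed $\partial_\delta r_\delta(\mu_m)|_{\delta=0}=\log\theta_m>0$), and the concavity of $\alpha\mapsto\alpha-r_\delta(\alpha)$ then forces $\alpha-r_\delta(\alpha)<0$ throughout $(\mu_m,\mu_m+\eps]$; so $\E{N_n}\to 0$, not $\infty$. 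This is consistent with the known fact $M_n\sim\lambda\log n$: no vertex has degree $(\lambda+\delta)\log n$ eventually. Lowering the threshold to $(\lambda-\delta)\log n$ does make $\E{N_n}\to\infty$, but then exhibiting such a vertex in the window does not localise the \emph{maximum}-degree vertex there, since a still-higher degree could occur outside.

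The paper avoids this entirely: the upper bound $I_n\le n^{\mu_m+\eps}$ is obtained by the \emph{same} exclusion argument as the lower bound, applied on the other side. On the event $\{M_n\geq(1-\eta)\log_{\theta_m}n\}$ (which holds eventually a.s.\ by the known max-degree result), the event $\{I_n\ge n^{\mu_m+\eps}\}$ implies $\max_{i\ge n^{\mu_m+\eps}}\Zm_n(i)\geq(1-\eta)\log_{\theta_m}n$, and the identical Chernoff-plus-union-bound computation (with the rate function $\alpha\mapsto\alpha-r(\alpha)$ again strictly negative on $[\mu_m+\eps,1)$) shows this has summable probability along $2^k$. No second-moment or existence argument is needed; both directions follow from excluding high degrees outside the window $(n^{\mu_m-\eps},n^{\mu_m+\eps})$ together with the a.s.\ lower bound on $M_n$.
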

	
	\begin{remark}\label{remark:thrmbddloc}
		$(i)$ The result also holds for $\wt I_n:=\sup\{i\in\N: \zni\geq \Zm_n(j)\text{ for all }j\in[n]\}$, so that \emph{all} vertices that attain the maximum degree have a label that is almost surely of the order $n^{\mu_m(1+o(1))}$. In fact, the result holds for vertices with `near-maximum' degree as well. That is, for vertices with degree $\log_{\theta_m}\!n-i_n$, where $i_n\to\infty$ and $i_n=o(\log n)$.
		
		$(ii)$ As discussed in Remark~\ref{remark:def}$(ii)$, the result presented in Theorem~\ref{thrm:bddloc} also holds, including the additional results discussed in point $(i)$ above, when considering the case of \emph{random out-degree}.
	\end{remark} 
	
	When we consider the Weighted Recursive Tree model (WRT), that is, the WRG model as in Definition~\ref{def:wrg} with $m=1$, we can provide higher-order results for the labels of high-degree vertices. Here, high-degree means that the degree diverges with $n$. These results are known for the random recursive tree model already, as proved by the author in~\cite{Lod22}. To extend this to the more general WRT model, additional assumptions on the vertex-weight distribution are required to prove these higher-order results, which are as follows.
	
		\begin{assumption}[Vertex-weight distribution]\label{ass:weights}
			The vertex-weights $W,(W_i)_{i\in\N}$ are i.i.d.\ strictly positive random variables, whose distribution satisfies the following condition:
			\begin{itemize}
				\item[\namedlabel{item:c1}{\textbf{C1}}] The essential supremum of the distribution is one; $x_0:=\sup\{x\in \R:\P{W\leq x}<1\}=1$.
			\end{itemize}
			Additionally, we may require the following conditions:
			\begin{itemize}
				\item[\namedlabel{item:c2}{\textbf{C2}}] There exist $a,c_1>0,\tau\in(0,1),$ and $x_0\geq 1$ such that $\P{W\geq 1-1/x}\geq a\e^{-c_1x^\tau}$ for all $x\geq x_0$.
				\item[\namedlabel{item:c3}{\textbf{C3}}]There exist $C,\rho>0,$ and $x_0\in(0,1)$ such that $\P{W\leq x}\leq Cx^\rho$ for all $x\in[0,x_0]$.
			\end{itemize}
			Finally, we may assume the vertex-weights satisfy one of the following cases:
			\begin{enumerate}[labelindent = 1cm, leftmargin = 2.2cm]
				\item[\namedlabel{ass:weightatom}{$($\textbf{Atom}$)$}] The vertex weights follow a distribution that has an atom at one, i.e.\ there exists $q_0\in(0,1]$ such that $\P{W=1}=q_0$. Note that $q_0=1$ recovers the random recursive tree model.
				\item[\namedlabel{ass:beta}{$($\textbf{Beta}$)$}] The vertex weights follow a beta distribution: for some $\alpha,\beta>0$ and with $\Gamma$ the gamma function, 
				\be \label{eq:betacdf}
				\P{W\geq x}=\int_x^1 \frac{\Gamma(\alpha+\beta)}{\Gamma(\alpha)\Gamma(\beta)}s^{\alpha-1}(1-s)^{\beta-1}\,\d s, \qquad x\in[0,1].
				\ee 
				\item[\namedlabel{ass:gamma}{$($\textbf{Gamma}$)$}] The vertex weights follow a distribution that satisfies, for some $b\in\R,c_1>0$, and $\tau\geq 1$ such that $b\leq 0$ if $\tau>1$ or $bc_1\leq 1$ when $\tau=1$, 
				\be \label{eq:gumbex}
				\P{W\geq x}=(1-x)^{-b}\e^{-(x/(c_1(1-x)))^\tau}, \qquad x\in[0,1).
				\ee 
			\end{enumerate}  
		\end{assumption}
		
		\begin{remark}\label{rem:ass}
			$(i)$ Condition~\ref{item:c1} naturally follows from the model definition, and is also stated in Remark~\ref{remark:def}(i). Condition~\ref{item:c2} provides a family of vertex-weight distributions for which we can prove a central limit theorem-type result for the labels of high-degree vertices. Informally, for vertex-weights with a tail distribution that decays at a sub-exponential rate as it approaches one, it holds that
			\be 
			\P{\Zm_n(v)\geq d,v\geq n\exp(-(1-\theta^{-1})d+x\sqrt{(1-\theta^{-1})^2d})}\approx \P{\Zm_n(v)\geq d}(1-\Phi(x)),
			\ee 
			where $\theta:=\theta_1=1+\E W$, $v$ is a vertex selected uniformly at random from $[n]$, $x\in\R$ is fixed, and $d=d(n)$ is an integer-valued sequence that diverges with $n$. This general result can be used to prove the desired result.
			
			Condition~\ref{item:c3} follows from~\cite{EslLodOrt21}. There, this condition is necessary to be able to precisely determine the asymptotic behaviour of $\P{\Zm_n(v)\geq d}$, where $d=d(n)\in\N$ is an integer-valued sequence and $v$ is a vertex selected uniformly at random from $[n]$. It is only needed here in a part of Theorem~\ref{thrm:conddegloc}. 
			
			$(ii)$ The~\ref{ass:gamma} case derives its name from the fact that $X:=(1-W)^{-1}$ is distributed as a gamma random variable, conditionally on $X\geq 1$. The condition on the parameters ensures that the probability density function is non-negative on $[0,1)$.
			
			$(iii)$ We observe that both the~\ref{ass:weightatom} and~\ref{ass:beta} cases satisfy Conditions~\ref{item:c1} and~\ref{item:c2}, whereas the~\ref{ass:gamma} case does not satisfy Condition~\ref{item:c2}. Indeed, the behaviour observed in the latter case is different from vertex-weight distributions that do satisfy Condition~\ref{item:c2}. More broadly speaking, from the perspective of extreme value theory, any distribution that falls within the Weibull maximum domain of attraction satisfies condition~\ref{item:c2} (e.g.\ the beta distribution), as well as a large range of distributions with bounded support that fall in the Gumbel maximum domain of attraction (e.g.\ $W=1-1/X$, with $X$ a log-normal random variable, conditionally on $X\geq 1$). An example of a vertex-weight whose distribution does not satisfy Condition~\ref{item:c2} is $W=1-1/X$, where $X$ is a standard normal, conditionally on $X\geq 1$, which is similar to the~\ref{ass:gamma} case with $\tau=2$. For a more precise classification of these domains, we refer to~\cite{Res13} for more details.
		\end{remark}

	The following result identifies the rescaling of the label of high-degree vertices (where high-degree denotes a degree that diverges to infinity with $n$). In particular, it outlines behaviour outside of the range of Theorem~\ref{thrm:bddloc}, both for degrees that are smaller as well as degrees that are \emph{larger} than the maximum degree.
	
	\begin{theorem}[Central limit theorem for high-degree vertex labels]\label{thrm:conddegloc}
		Consider the WRT model, that is, the WRG model as in Definition~\ref{def:wrg} with $m=1$, with vertex-weights $(W_i)_{i\in\N}$ which satisfy conditions~\ref{item:c1} and~\ref{item:c2} in Assumption~\ref{ass:weights}. Fix $k\in\N$, let $(d_i)_{i\in[k]}$ be $k$ integer-valued sequences that diverge as $n\to\infty$ and define
			\be \label{eq:ci}
			c_i:=\limsup_{n\to\infty}\frac{d_i}{\log n},\qquad i\in[k].
			\ee 
			First, assume $c_i\in[0,1/\log\theta)$ for all $i\in[k]$. Then, the tuple  
		\be 
		\Big(\frac{\log v_i -(\log n-(1-\theta^{-1})d_i)}{\sqrt{(1-\theta^{-1})^2d_i}}\Big)_{i\in[k]},
		\ee 
		conditionally on the event $\Zm_n(v_i)\geq d_i$ for all $i\in[k]$, converges in distribution to $(M_i)_{i\in[k]}$, which are $k$ independent standard normal random variables. If we additionally assume that Condition~\ref{item:c3} of Assumption~\ref{ass:weights} holds, then the result holds for $(c_i)_{i\in[k]}\in[1/\log\theta,\theta/(\theta-1))^k$ as well.
	\end{theorem}
	 
		\begin{remark} 
			$(i)$ Theorem~\ref{thrm:conddegloc} covers vertex-weight distributions that fall in the~\ref{ass:weightatom} and \ref{ass:beta} cases as well. As observed in Remark~\ref{rem:ass}$(iii)$, such distributions already satisfy conditions~\ref{item:c1} and~\ref{item:c2} (the other families of distributions outlined in $(iii)$ are also covered by Theorem~\ref{thrm:conddegloc}). 
			
			$(ii)$ Condition~\ref{item:c3} allows us to extend Theorem~\ref{thrm:conddegloc} to a wider range of degrees $d_i$, as it enables us to use~\cite[Proposition $5.1$]{EslLodOrt21} (Proposition~\ref{lemma:degprobasymp} here). This result provides an asymptotic expression for $\P{\Zm_n(v)\geq d}$, where $v$ is a vertex selected uniformly at random from $[n]$. This result can be avoided when the degrees $d_i$ are not too large (i.e.\ $\ll \log(n)/\log \theta$), so that Condition~\ref{item:c3} is not required in those cases. We observe that the~\ref{ass:beta} case satisfies condition~\ref{item:c3}.
		\end{remark}
	
	The following corollary is an immediate result from Theorem~\ref{thrm:conddegloc}
	
	\begin{corollary}\label{cor:conddegloc}
		With the same definitions and assumptions as in Theorem~\ref{thrm:conddegloc}, additionally assume that for each $i\in[k]$,
		\be 
		|d_i-c_i\log n|=o(\sqrt{\log n}).
		\ee 
		Then, the tuple
		\be 
		\Big(\frac{\log v_i -(1-c_i(1-\theta^{-1}))\log n}{\sqrt{c_i(1-\theta^{-1})^2\log n}}\Big)_{i\in[k]},
		\ee 
		conditionally on the event $\Zm_n(v_i)\geq d_i$ for all $i\in[k]$, converges in distribution to $(M_i)_{i\in[k]}$, which are $k$ independent standard normal random variables. Assuming Condition~\ref{item:c3} of Assumption~\ref{ass:weights} holds allows us to extend the result to $c_i\in[1/\log \theta,\theta/(\theta-1))$ for all $i\in[k]$ as well.
	\end{corollary}
	
	\begin{remark}\label{rem:degthrm}
		In both Theorem~\ref{thrm:conddegloc} and Corollary~\ref{cor:conddegloc}, the same results can be obtained when working with the conditional event $\{\Zm_n(v_i)= d_i, i\in[k]\}$ rather than $\{\Zm_n(v_i)\geq d_i, i\in[k]\}$, with an almost identical proof. 
	\end{remark} 
	
	Theorem~\ref{thrm:conddegloc} is very general, in the sense that  Condition~\ref{item:c2} is a mild condition satisfied by a wide range of distributions. In contrast, the behaviour of the maximum degree is much more dependent on the precise behaviour of the vertex-weight distribution (see, for example,~\cite[Theorems $2.6$ and $2.7$]{EslLodOrt21}). The labels of high-degree vertices are much less influenced by the underlying vertex-weight distribution. We provide an heuristic explanation of this fact in Section~\ref{sec:heur}.
	
	When more precise information regarding the vertex-weight distribution is available, as in the \ref{ass:weightatom}, \ref{ass:beta}, and~\ref{ass:gamma} cases, even more can be proved. We state a result for the~\ref{ass:weightatom} case here. It shows the distributional convergence of degrees and their labels in the WRT under proper rescaling. Let us set $\theta:=\theta_1,\mu:=\mu_1=1-(\theta-1)/(\theta\log\theta)$ and define $\sigma^2:=1-(\theta-1)^2/(\theta^2\log \theta)$.
	
	\begin{theorem}[Degrees and labels in the~\ref{ass:weightatom} case]\label{thrm:deglocwrt}
		Consider the WRT model, that is, the WRG model as in Definition~\ref{def:wrg} with $m=1$, with vertex-weights $(W_i)_{i\in\N}$ which satisfy the~\ref{ass:weightatom} case in Assumption~\ref{ass:weights}. Let $v^1,v^2,\ldots, v^n$ be the vertices in the tree in decreasing order of their in-degree $($\!where ties are split uniformly at random$)$, let $d_n^i$ and $\ell_n^i$ denote the in-degree and label of $v^i$, respectively, for $\inn$, and fix $\eps\in[0,1]$. Let $\eps_n:=\log_\theta n-\lfloor \log_\theta n\rfloor$, and let $(n_j)_{j\in\N}$ be a positive, diverging, integer sequence such that $\eps_{n_j}\to\eps$ as $j\to\infty$. Finally, let $(P_i)_{i\in\N}$ be the points of the Poisson point process $\CP$ on $\R$ with intensity measure $\lambda(x)=q_0\theta^{-x}\log \theta\,\d x$, ordered in decreasing order, and let $(M_i)_{i\in\N}$ be a sequence of i.i.d.\ standard normal random variables. Then, as $j\to\infty$, 
		\be
		\Big(d_{n_j}^i-\lfloor \log_\theta n_j\rfloor, \frac{\log(\ell_{n_j}^i)-\mu\log n_j}{\sqrt{(1-\sigma^2)\log n_j}}\Big)_{i\in[n_j]}\toindis (\lfloor P_i+\eps\rfloor , M_i)_{i\in\N}.
		\ee 
	\end{theorem}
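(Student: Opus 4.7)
The strategy is to extend the degree point process convergence established by Eslava, Lodewijks and Ortgiese~\cite{EslLodOrt21} by tracking the labels as marks, via a saddle-point / local central limit analysis for the label conditional on the degree. Conditioning on $(\F_i)_{i\in\N}$, the~\ref{ass:weightatom} assumption implies that $\{\inn:\F_i=1\}$ is a Bernoulli($q_0$)-thinning of $\N$, and since $S_k/k\to\theta-1$ almost surely, only vertices of weight one can attain a degree exceeding $\log_\theta n-C$ for large $n$ (every other weight value gives a strictly smaller limiting Poisson mean, by a straightforward large-deviations argument). We therefore restrict attention to weight-one vertices throughout.

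For such a vertex $i$, the in-degree $\Zm_n(i)=\sum_{k=i}^{n-1}B_{k,i}$ with $B_{k,i}\sim\mathrm{Ber}(1/S_k)$ conditionally independent is well approximated by $\mathrm{Poi}(\Lambda_n(i))$, where $\Lambda_n(i):=\sum_{k=i}^{n-1}1/S_k\sim\log(n/i)/(\theta-1)$. Using Stirling, the tail probability $\Pf{\Zm_n(i)\geq d}$ with $d=\lfloor\log_\theta n\rfloor+k$ and $\log i=\alpha\log n$ takes the form $n^{g_k(\alpha)+o(1)}$, where $g_k(\alpha)$ is maximised at $\alpha=\mu$ with $g_k(\mu)=0$, as one verifies directly from the definitions of $\mu$ and $\theta$. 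Writing $\log i=\mu\log n+y\sqrt{(1-\sigma^2)\log n}$, a second-order Taylor expansion at $\mu$ yields a Gaussian profile in $y$ with variance $-1/g_k''(\mu)=(\theta-1)^2/(\theta^2\log\theta)=1-\sigma^2$, and an overall intensity prefactor $\theta^{-k}$.

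Joint convergence is then established by computing factorial moments of the point measures
\be
\mathcal{N}_{n_j}^{(k)}:=\sum_{i\in[n_j]}\indicator{\F_i=1,\ \Zm_{n_j}(i)\geq\lfloor\log_\theta n_j\rfloor+k}\delta_{(\Zm_{n_j}(i)-\lfloor\log_\theta n_j\rfloor,\,(\log i-\mu\log n_j)/\sqrt{(1-\sigma^2)\log n_j})},
\ee
combining the factorial moment machinery of~\cite{EslLodOrt21} for the degree coordinate with the Gaussian profile above for the label coordinate. On test sets of the form $(a,b]\times\{d\}$, the intensity factorises and, summed over the discrete degree coordinate, yields convergence to a Poisson point process on $\R\times\R$ with intensity $q_0\theta^{-x}\log\theta\,\d x\otimes\phi(y)\,\d y$; the fractional offset $\eps_{n_j}\to\eps$ is absorbed into the floor operation $\lfloor P_i+\eps\rfloor$, and the uniform tie-breaking is immaterial in the limit because the continuous points $P_i$ are almost surely distinct. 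Asymptotic independence of the Gaussian marks follows from the essentially disjoint log-label supports of different high-degree vertices, combined with the conditional independence of $\Zm_n(\cdot)$ on disjoint windows of labels.

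The main obstacle is obtaining sharp enough Poisson / local CLT estimates for $\Zm_n(i)$ uniformly in $i$ on a log-label window of width $\sqrt{(1-\sigma^2)\log n}$ around $\mu\log n$, and reconciling these with the annealed randomness of $(\F_i)_{i\in\N}$. Concretely, one must show that the fluctuations of $S_k-k(\theta-1)$—of order $\sqrt{k\log\log k}$ by the LIL—together with the Poisson approximation error for $\Zm_n(i)$, do not contaminate the $\sqrt{\log n}$-scale Gaussian in the label direction. This is also the place where the technical assumption in~\cite{EslLodOrt21} can be relaxed: only the second-order expansion of the saddle-point near $\alpha=\mu$ is needed, rather than the finer higher-order expansion that was required to resolve the precise fractional part in the degree asymptotics there.
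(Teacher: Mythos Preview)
Your high-level strategy---marked point process convergence via factorial moments, with a Gaussian profile in the label coordinate emerging from a second-order expansion around $\alpha=\mu$---is the same as the paper's. The paper also reduces Theorem~\ref{thrm:deglocwrt} to the convergence of factorial moments of the variables $\wt X_{j}^{(n)}(B),\wt X_{\geq j}^{(n)}(B)$ (Proposition~\ref{prop:momentconvwrt}), which in turn rests on a precise asymptotic for $\P{\Zm_n(v_i)= d_i,\,v_i>\ell_i,\ i\in[k]}$ with $v_1,\ldots,v_k$ uniform without replacement (Proposition~\ref{prop:deglocwrt}).

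There is, however, a genuine gap in your factorisation step. Your justification for asymptotic independence of the marks---``essentially disjoint log-label supports of different high-degree vertices, combined with the conditional independence of $\Zm_n(\cdot)$ on disjoint windows of labels''---is incorrect on both counts. First, all high-degree vertices have labels concentrating around $n^{\mu}$ on the \emph{same} $\sqrt{\log n}$ window, so their log-label supports are not disjoint at all; the whole point of the theorem is that several points of the limiting process share overlapping Gaussian marks. Second, the in-degrees $\Zm_n(i)$ and $\Zm_n(j)$ are \emph{not} conditionally independent given the weights, even for labels in disjoint ranges: every new vertex chooses exactly one parent, so the indicators $\{k\to i\}$ and $\{k\to j\}$ are negatively correlated for each $k$. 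The paper uses this negative quadrant dependence only as a crude upper bound (to discard error terms), never as a source of independence.

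What actually delivers the product structure in the factorial moments is the direct computation in Proposition~\ref{prop:deglocwrt}: one writes $\Pf{\Zm_n(j_i)=d_i,\,i\in[k]}$ as a sum over all ordered $k$-tuples of arrival-time configurations $(m_{s,t})$, controls $S_j$ uniformly via Azuma--Hoeffding on the event $E_n$ (giving $\zeta_n=n^{-\delta\eta}$, much sharper than the LIL bound you cite), and then shows the resulting multiple sum decouples into a product of one-vertex integrals up to $1+o(1)$. The Gaussian in the label arises not from a local CLT for $\Zm_n(i)$ but from recognising each one-vertex integral as $\Pf{X_t<(1+W/(\theta-1))\log(n/\ell_t)}$ with $X_t\sim\Gamma(d_t+1,1)$, and then applying the CLT to the Gamma variable (Lemma~\ref{lemma:expasymp}). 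Your saddle-point heuristic for a single vertex is morally equivalent to this, but the multi-vertex decoupling is the hard part and cannot be obtained from any independence argument; it requires the explicit combinatorial analysis.
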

	
	\begin{remark}\label{rem:highatom}
		We can view the convergence result in Theorem~\ref{thrm:deglocwrt} in terms of the weak convergence of marked point processes. Indeed, we can order the points in the marked point process 
		\be 
		\CM\CP^{(n)}:=\sum_{i=1}^n \delta_{(\zni-\lfloor \log_\theta n\rfloor, (\log i-\mu\log n)/\sqrt{(1-\sigma^2)\log n})}, 
		\ee 
		in decreasing order with respect to the first argument of the tuples, where $\delta$ is a Dirac measure. 
		
		Moreover, Theorem~\ref{thrm:deglocwrt} extends~\cite[Theorem $2.5$]{EslLodOrt21} to a wider range of vertex-weight distributions. Namely, let us define $\Z^*:=\Z\cup\{\infty\}$ and $\CM^\#_{\Z^*\times \R}, \CM^\#_{\Z^*},$ to be the spaces of boundedly finite measures on $\Z^*\times \R$ and $\Z^*$, respectively, and define  $T:\CM^\#_{\Z^*\times \R}\to \CM^\#_{\Z^*}$ for $\CM\CP\in \CM^\#_{\Z^*\times \R}$ by $T( \CM\CP):= \sum_{(x_1,x_2)\in \CM\CP}\delta_{x_1}$. $T( \CM\CP)$ is the restriction of marked processes $\CM\CP$ to its first coordinate, i.e.\ to the ground process $\CP:=T(\CM\CP)$. Since $T$ is continuous and $\CM\CP^{(n)}\in \CM^\#_{\Z^*\times \R}$, it follows from the continuous mapping theorem that Theorem~\ref{thrm:deglocwrt} implies Theorem $2.5$ in~\cite{EslLodOrt21} without the need of Condition~\ref{item:c3}.
	\end{remark} 
	
	Similar results hold in the~\ref{ass:beta} case as well. In the~\ref{ass:gamma} case slightly different behaviour is observed, where additional higher-order terms in the rescaling of the labels of high-degree vertices are required. We have deferred the results regarding these two cases to Section~\ref{sec:examples}, since they are similar in nature to Theorems~\ref{thrm:conddegloc} and~\ref{thrm:deglocwrt} but of independent interest. Moreover, the results in Theorems~\ref{thrm:conddegloc} and~\ref{thrm:deglocwrt}, as well as the results presented in Section~\ref{sec:examples}, also hold when we consider the WRT with \emph{random out-degree}, as discussed in Remark~\ref{remark:def}.

	\textbf{Discussion, open problems and outline of the paper}\\
	For the proof of Theorem~\ref{thrm:bddloc}, only the asymptotic growth-rate of the maximum degree of the WRG model, as proved by Lodewijks and Ortgiese in~\cite[Theorem $2.9$, Bounded case]{LodOrt21} (Theorem~\ref{thrm:bddmax} here), is required to prove the growth rate of the location of the maximum degree in the WRG model. It uses a slightly more careful approach compared to the proof of~\cite[Theorem $2.9$, Bounded case]{LodOrt21}, which allows us to determine the range of vertices which obtain the maximum degree.
	
	In recent work by Eslava, the author, and Ortgiese~\cite{EslLodOrt21}, more refined asymptotic behaviour of the maximum degree is presented for the weighted recursive tree model (WRT), that is, the WRG model with $m=1$, under additional assumptions on the vertex-weight distribution. We refine their proofs to allow for an extension of their results and to obtain higher-order results for the location of high-degree vertices. Whether either of these results can be extended to the case $m>1$ is an open problem to date.
	
	Finally, more involved results can be proved for the random recursive tree model. There, the joint convergence of the labels and depths of and the graph distance between high-degree vertices can be obtained, as shown by the author in~\cite[Theorems $2.2$ and $2.4$]{Lod22}. The analysis of the random recursive tree in that article heavily relies on a different construction of the tree compared to the WRG and WRT models, which can be viewed as a construction backward in time. This methodology can be applied to the random recursive tree only, and allows for a simplification of the dependence between degree, depth, and label of a vertex. Whether such results can be extended to the weighted tree case is unclear, but would surely need a different approach.
	
	The paper is organised as follows: In Section~\ref{sec:heur} we provide a short, non-rigorous and intuitive argument as to why the result presented in Theorem~\ref{thrm:bddloc} related to the WRG model holds and briefly discuss the approach to proving the other results stated in Section~\ref{sec:def}. Section~\ref{sec:mainproof} is devoted to proving Theorem~\ref{thrm:bddloc}. In Section~\ref{sec:highproof} we introduce some intermediate results related to the WRT model and use these to prove Theorems~\ref{thrm:conddegloc} and~\ref{thrm:deglocwrt}. We prove the intermediate results in Section~\ref{sec:degwrtproof}. We discuss the additional results (similar to Theorems~\ref{thrm:conddegloc} and~\ref{thrm:deglocwrt}) for the~\ref{ass:beta} and~\ref{ass:gamma} cases in Section~\ref{sec:examples}. Finally, the~\hyperref[sec:appendix]{Appendix} contains several technical lemmas that are used in some of the proofs.
	
	\section{Heuristic ideas behind the main results and preliminary results}\label{sec:heur}
	
	In this section we present some heuristic, non-rigorous ideas that underpin the main results, as presented in Theorems~\ref{thrm:bddloc},~\ref{thrm:conddegloc},~\ref{thrm:deglocwrt} (as well as the results presented in Section~\ref{sec:examples}), and also some preliminary results required throughout the paper.
	
	\subsection{Heuristic ideas}
	
	To understand why the maximum degree of WRG model is attained by vertices with labels of order $n^{\mu_m(1+o(1))}$, where $\mu_m:=1-(\theta_m-1)/(\theta_m\log\theta_m)$, we first state the following observation: for $m\in\N$, define $f_m:(0,1)\to \R_+$ by
	\be \label{eq:f}
	f_m(x):=\frac{1}{\log \theta_m}\Big(\frac{(1-x)\log \theta_m}{\theta_m-1}-1-\log\Big(\frac{(1-x)\log \theta_m}{\theta_m-1}\Big)\Big),\qquad x\in(0,1).
	\ee 
	It is readily checked that $f_m$ has a unique fixed point $x^*_m$ in $(0,1)$, namely $x^*_m=\mu_m$, and that $f_m(x)>x$ for all $x\in(0,1),x\neq \mu_m$. Then, using a Chernoff bound on $\zni$ (a Markov bound on $\exp(t\zni)$ for $t>0$ and determining the value of $t$ that minimises the upper bound) yields
	\be \label{eq:chern}
	\Pf{\zni\geq \log_{\theta_m}\!n}\leq \exp(-\log_{\theta_m}\!n(u_i-1-\log u_i)),
	\ee 
	where 
	\be
	u_i=\frac{mW_i}{\log_{\theta_m}\!n}\sum_{j=i}^{n-1}\frac{1}{S_j}.
	\ee 
	Here we use the quantity $\log_{\theta_m}\! n$, as this (asymptotically) is the size of the maximum degree. Let us now assume that $i\sim n^\beta$ for some $\beta\in(0,1)$. By Lemma~\ref{lemma:logconv}, almost surely
	\be 
	\sum_{j=i}^{n-1}1/S_j =(1+o(1))\log(n/i)/\E W=(1+o(1))(1-\beta)\log (n)/\E W,
	\ee 
	so that
	\be 
	u_i\leq \frac{m(1-\beta)\log \theta_m}{\E W}(1+o(1))=\frac{(1-\beta)\log \theta_m}{\theta_m-1}(1+o(1))<1,
	\ee  
	almost surely, where the final inequality holds for all $n$ sufficiently large as $\log(1+x)\leq x$ for all $x>-1$. Moreover, the $o(1)$ term is independent of $i$. As $x\mapsto x-1-\log x$ is decreasing on $(0,1)$, we can use the almost sure upper bound on $u_i$ in~\eqref{eq:chern}, combined with~\eqref{eq:f}, to obtain
	\be 
	\Pf{\zni\geq \log_{\theta_m}\! n}\leq\exp(-f_m(\beta)\log n(1+o(1)) ).
	\ee 
	Note that this upper bound depends on $i$ only via $i\sim n^\beta$. We perform a union bound over $\{\inn: i\leq n^{\mu_m-\eps}\text{ or }i\geq n^{\mu_m+\eps}\}$. As the sum obtained from the union bound can be well-approximated by an integral, we arrive at 
	\be 
	\mathbb P\Big(\max_{i\in[n]\backslash [n^{\mu_m-\eps}, n^{\mu_m+\eps}]}\zni \geq \log_{\theta_m}\! n\Big)\leq \int_{(0,1)\backslash (\mu_m-\eps,\mu_m+\eps)}\!\!\!\!\!\!\!\!\exp((\beta-f_m(\beta))\log n(1+o(1)))\,\d \beta.
	\ee 
	It follows from the properties of the function $f_m$ (as stated below~\eqref{eq:f}) that this integral converges to zero with $n$.
	
	To obtain the more precise behaviour of the labels of high-degree vertices, as in (among others) Theorem~\ref{thrm:conddegloc}, the precise evaluation of the union bound in the approach sketched above no longer suffices. Instead, for any $k\in\N$, we derive in Proposition~\ref{prop:deglocwrt} the asymptotic expression
		\be \label{eq:probapprox}
		\P{\Zm_n(v_i)\geq d_i, v_i>\ell_i, i\in[k]}\approx \prod_{i=1}^k \mathbb E\bigg[\Big(\frac{W}{\theta-1+W}\Big)^{d_i}\!\Pf{X_i\leq\Big(1+\frac{W}{\theta-1}\Big)\log(n/\ell_i)}\bigg],
		\ee 
		where $v_1, \ldots,v_k$ are $k$ vertices selected uniformly at random from $[n]$ without replacement, $\theta:=\theta_1=1+\E W$, and $X_i\sim\text{Gamma}(d_i+1,1)$ for each $i\in[k]$, under certain assumptions on the $d_i$ and $\ell_i$. Heuristically, this follows from the fact that $S_j\approx j\E W$, and
		\be 
		\Zm_n(j)=\sum_{i=j+1}^n \text{Ber}\Big(\frac{W_j}{S_{i-1}}\Big)\approx \text{Poi}\Big(\sum_{i=j+1}^n \frac{W_j}{i\E W}\Big)\approx \text{Poi}\Big(\frac{W_j}{\theta-1}\log(n/j)\Big).
		\ee 
		By conditioning on the value of $v$, we thus have (with $k=1$ for simplicity and dropping indices) 
		\be
		\P{\Zm_n(v)\geq d, v>\ell}\approx \P{\text{Poi}\Big(\frac{W}{\theta-1}\log(n/v)\Big)\geq d,v>\ell},
		\ee 
		where we can remove the index of the vertex-weight, as the weights are i.i.d.\ and hence it does not influence the probability. We first observe that $v/n\toindis U$, where $U$ is a uniform random variable on $(0,1)$. Second, we have that $T:=\log(1/U)$ is a rate one exponential random variable, independent of everything else. Finally, the duality between Poisson and gamma random variables via Poisson processes yields that we can approximate the right-hand side by
		\be
		\P{X\leq \frac{W}{\theta-1}T, T\leq \log(n/\ell)}=\P{X\leq T_W, T_W\leq \frac{W}{\theta-1}\log(n/\ell)}, 
		\ee 
		where $X\sim\text{Gamma}(d,1)$ and $T_W:=WT/(\theta-1)$. Note that $T_W$ is exponential with rate $(\theta-1)/W$, conditionally on $W$. Setting $x:=W\log(n/\ell)/(\theta-1)$ and conditioning on both $W $ and $X$, we obtain
		\be
		\Pf{X\leq T_W\leq x\,|\, X}=\ind_{\{X\leq x\}}\int_X^x \frac{\theta-1}{W}\e^{-(\theta-1)t/W}\,\d t=\ind_{\{X\leq x\}}\big(\e^{-(\theta-1) X/W}-\e^{-(\theta-1) x/W}\big).
		\ee
		Taking the expected value with respect to $X$ then yields
		\be 
		\Pf{X\leq T_W\leq x}=\Big(1+\frac{\theta-1}{W}\Big)^{-d}\Pf{X'\leq x}-\e^{-(\theta-1)x/W}\P{X\leq x},
		\ee 
		where $X'\sim\text{Gamma}(d,1+(\theta-1)/W)$, conditionally on $W$. As $X\overset d=(1+(\theta-1)/W)X' \sim \text{Gamma}(d,1)$, we obtain by substituting the definition of $x$, 
		\be 
		\Big(\frac{W}{\theta-1+W}\Big)^d\Pf{X\leq \Big(1+\frac{W}{\theta-1}\Big)\log(n/\ell)}-\frac\ell n\Pf{X\leq \frac{W}{\theta-1}\log(n/\ell)}.
		\ee 
		Conditions on $d$ and $\ell$ will allow us to show that the second term is negligible with respect to the first term and hence an error term. Taking the expected value with respect to $W$ then approximately yields~\eqref{eq:probapprox}. This result can then be used to obtain more precise statements regarding the label of high-degree vertices, as well as the size of the maximum degree in the tree.
		
		We finally comment on Condition~\ref{item:c2} and Theorem~\ref{thrm:conddegloc}. For vertex-weight distributions that satisfy this condition, we can show (as in Lemma~\ref{lemma:expasymp} in the~\hyperref[sec:appendix]{Appendix}) that the main contribution to 
		\be 
		\P{\Zm_n(v)\geq d}\approx\E{\Big(\frac{W}{\theta-1+W}\Big)^d}
		\ee 
		comes from values of $W$ close to one, namely at $W=1-Cd^{-\beta}$ for some constant $C>0$ and $\beta>1/2$ (or even closer to one). As such, one would expect this to be the same for the right-hand side of~\eqref{eq:probapprox}. Substituting this value of $W$ roughly yields (again dropping indices and setting $C=1$ for simplicity)
		\be 
		\E{\Big(\frac{W}{\theta-1+W}\Big)^d}\P{X\leq\Big(\frac{\theta}{\theta-1}+\frac{1}{d^\beta(\theta-1)}\Big)\log (n/\ell)}. 
		\ee 
		When we set, for $z\in\R$, 
		\be 
		\ell:=n\exp\big(-(1-\theta^{-1})(\E X - z\sqrt{\Var(X)})\big)\approx n\exp\big(-(1-\theta^{-1})(d-z\sqrt{d})), 
		\ee 
		this simplifies to 
		\be 
		\E{\Big(\frac{W}{\theta-1+W}\Big)^d}\P{\frac{X-\E X}{\sqrt{\Var(X)}}\leq -z+\frac{\E X(1+o(1))}{\theta d^\beta\sqrt{\Var(X)}}}.
		\ee 
		Now, the probability tends to $1-\Phi(z)$ by the central limit theorem when $d$ diverges with $n$, since $\E X=d+1$ and $d^\beta\sqrt{\Var(X)}\sim d^{1/2+\beta}\gg d$ as $\beta>1/2$. This thus shows that $\log v$ is approximately normal and provides the asymptotic mean and variance.  
		
		For tail distributions that decay at a faster rate near one, the main contribution to the expected value is made for $W=1-d^{-\beta}$ with $\beta\leq 1/2$, for which this argument does not hold. Here, we require additional higher-order terms in the rescaling of the labels of high-degree vertices. An example of such a family of distributions is presented in the~\ref{ass:gamma} case of Assumption~\ref{ass:weights}. Theorem~\ref{thrm:gammacond} provides, to some extent, the behaviour of the labels in this case.
		
		\subsection{Preliminaries}
		
		Here we present some known results that are needed throughout the paper. The first result states the almost sure convergence of the maximum degree in the WRG model, as in Theorem $2.9$ in~\cite{LodOrt21}:
		
		\begin{theorem}[Maximum degree in WRGs with bounded random weights, \cite{LodOrt21}]\label{thrm:bddmax}
			Consider the \\WRG model as in Definition~\ref{def:wrg} with almost surely bounded vertex-weights and $m\in\N$. Then, 
			\be 
			\max_{\inn}\frac{\zni}{\log_{\theta_m}\!n}\toas 1.
			\ee 
		\end{theorem}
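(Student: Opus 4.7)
The plan is to prove $\max_{i \in [n]} \zni / \log_{\theta_m} n \toas 1$ by establishing matching upper and lower in-probability bounds of the form $\Pf{(1-\eta) \log_{\theta_m} n \leq \max_{i \in [n]} \zni \leq (1+\eta) \log_{\theta_m} n} \to 1$ for arbitrary $\eta > 0$, with error probabilities that are summable along the dyadic subsequence $n_k := 2^k$. Borel--Cantelli then yields almost-sure convergence along $n_k$, and monotonicity of $n \mapsto \max_{i\in[n]} \zni$ interpolates between consecutive dyadic indices since $\log_{\theta_m}(2n) = \log_{\theta_m} n + O(1)$.

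For the upper bound, fix $\eta > 0$ and set $d_n := (1+\eta) \log_{\theta_m} n$. Conditional on $(W_i)_{i\in\N}$, $\zni$ is a sum of independent Binomials with parameters $m$ and $W_i/S_{j-1}$ for $j = i+1,\dots, n$, so the Chernoff computation outlined in Section~\ref{sec:heur} yields
\begin{equation*}
\Pf{\zni \geq d_n} \leq \exp\bigl(-d_n(u_i - 1 - \log u_i)\bigr),\qquad u_i := \frac{m W_i}{d_n} \sum_{j=i}^{n-1} \frac{1}{S_j}.
\end{equation*}
Lemma~5.1 of~\cite{LodOrt21} gives $\sum_{j=i}^{n-1} 1/S_j = (1+o(1)) \log(n/i)/\E W$ almost surely, uniformly in $i$, and the a.s.\ bound $W_i \leq 1$ yields $u_i \leq (1-\log_n i) \log \theta_m / [(1+\eta)(\theta_m-1)] \cdot (1+o(1))$, which is strictly bounded below $1$. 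Converting the union bound $\sum_{i=1}^n \Pf{\zni \geq d_n}$ to an integral in $\beta = \log_n i \in (0,1)$ bounds it by $n^{-c(\eta)}$ for some $c(\eta) > 0$: one uses that the function $\beta \mapsto \beta - f_m(\beta)$ from \eqref{eq:f} is non-positive with unique zero at $\mu_m$, and that the $\eta$-perturbation of the rate function makes the corresponding exponent strictly negative uniformly in $\beta$.

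For the lower bound, fix $\eta > 0$, set $d_n := (1-\eta) \log_{\theta_m} n$, and restrict attention to `good' vertices in $I_n := \{i \in \N : n^{\mu_m}/2 \leq i \leq 2 n^{\mu_m}\}$ with $W_i \geq 1 - \delta$ for some small $\delta > 0$. Since $x_0 = 1$, each vertex is good independently with some positive probability $p_\delta$. A matching lower bound for the Chernoff estimate (together with Lemma~5.1 of~\cite{LodOrt21}) gives $\Pf{\zni \geq d_n} \geq n^{-\mu_m + c'(\eta,\delta)}$ for good $i \in I_n$, so the expected number $N_n$ of good high-degree vertices in $I_n$ diverges like $n^{c'(\eta,\delta)}$. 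A Chebyshev-type argument then gives $\Pf{N_n \geq 1} \to 1$, and summability along $n_k = 2^k$ combined with Borel--Cantelli upgrades this to an almost-sure statement.

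The main obstacle is the second-moment bound for $N_n$: the in-degrees $\zni$ and $\Zm_n(j)$ of two distinct old vertices are coupled through the common sequence of incoming vertices and share the denominators $S_j$. Conditioning on $(W_i)_{i\in\N}$ removes the randomness of $S_j$ and makes increments independent across the arrival index $j$, but the joint attachment indicators at a given $j$ still require covariance control. I expect the argument to reduce to showing $\Cov(\1{\zni \geq d_n}, \1{\Zm_n(j) \geq d_n})$ is negligible compared to $\Pf{\zni \geq d_n}\Pf{\Zm_n(j) \geq d_n}$, which can be obtained by bounding the joint MGF $\Ef{}{\exp(t\zni + s\Zm_n(j))}$ using that the Binomials at each step $j$ are concentrated on at most $m$ attachments. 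Once this covariance estimate is in hand, Paley--Zygmund completes the lower bound, and the proof concludes.
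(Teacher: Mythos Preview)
This theorem is not proved in the present paper: it is quoted from~\cite{LodOrt21} (their Theorem~2.9, Bounded case) and used as input. There is therefore no ``paper's own proof'' to compare against here. That said, your outline is essentially the strategy used in~\cite{LodOrt21}, and the heuristics in Section~\ref{sec:heur} of this paper confirm the Chernoff/union-bound half of it.

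Two comments on the proposal itself. First, the phrase ``a matching lower bound for the Chernoff estimate'' hides real work: Chernoff gives only upper bounds on tail probabilities, and you need a genuine lower bound on $\Pf{\zni\geq d_n}$ for good $i$. One clean way is to note that, conditionally on the weights, $\zni$ is a sum of independent Bernoulli variables with small parameters and is therefore close to Poisson with mean $mW_i\sum_{j=i}^{n-1}1/S_j$; a single-point Poisson lower bound (as used in the proof of Lemma~\ref{lemma:littleo} in this paper) then gives the required $n^{-\mu_m+c'(\eta,\delta)}$.

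Second, and more importantly, the covariance obstacle you anticipate is not an obstacle at all. Under $\mathbb P_W$ the in-degrees are negatively quadrant dependent: for distinct $r_1,\ldots,r_k$,
\[
\Pf{\Zm_n(r_i)\geq d_i,\ i\in[k]}\leq \prod_{i=1}^k \Pf{\Zm_n(r_i)\geq d_i},
\]
which is~\cite[Lemma~7.1]{LodOrt21} and is also invoked in the proof of Proposition~\ref{prop:deglocwrt} in this paper. This immediately gives $\mathbb E_W[N_n^2]\leq \mathbb E_W[N_n]^2+\mathbb E_W[N_n]$, so Chung--Erd\H{o}s (or Paley--Zygmund) applies directly without any joint-MGF computation. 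Replacing your proposed MGF argument with this NQD fact turns the lower bound into a short, clean step.
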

		
		The following result concerns the asymptotic behaviour of the limiting (tail) degree distribution $p_{\geq d}$ and $p_d$, defined as 
		\be 
		p_{\geq d}:=\E{\Big(\frac{W}{\theta_m-1+W}\Big)^d}, \qquad p_d:=\E{\frac{\theta_m-1}{\theta_m-1+W}\Big(\frac{W}{\theta_m-1+W}\Big)^d},
		\ee 
		of the weighted recursive graph as $d$ from diverges, which combines (parts of) Theorem $2.7$ from~\cite{LodOrt21}, and Lemmas $5.5$, $7.1$ and $7.3$ from~\cite{EslLodOrt21}. For the purposes of this paper, we state the result for the case $m=1$ only. 
		
		\begin{theorem}[Asymptotic behaviour of $p_{\geq d}$,~\cite{EslLodOrt21,LodOrt21}]\label{thrm:pkasymp}
			Consider the WRT with vertex-weights \\$(W_i)_{i\in\N}$, i.i.d.\ copies of a non-negative random variable $W$ which satisfies Condition~\ref{item:c1}. Recall that $\theta:=\theta_1=1+\E W$. Then, for any $\xi>0$ and $d$ sufficiently large, 
			\be 
			(\theta+\xi)^{-d}\leq p_d\leq p_{\geq d}\leq \theta^{-d}.
			\ee 
			Moreover, consider the different cases in Assumption~\ref{ass:weights}: 
			\begin{itemize}
				\item If $W$ satisfies the~\ref{ass:weightatom} case for some $q_0\in(0,1]$, 
				\be 
				p_{\geq d}=q_0\theta^{-d}(1+o(1)).
				\ee 
				\item If $W$ satisfies the~\ref{ass:beta} case for some $\alpha,\beta>0$, 
				\be 
				p_{\geq d}=\frac{\Gamma(\alpha+\beta)}{\Gamma(\alpha)}(1-\theta^{-1})^{-\beta} d^{-\beta}\theta^{-d}\big(1+\cO(1/k)\big).
				\ee 
				\item If $W$ satisfies the~\ref{ass:gamma} case for some $b\in\R,c_1>0$, and $\tau= 1$ such that $bc_1\leq 1$, then 
				\be 
				p_{\geq d}=Cd^{b/2+1/4}\e^{-2\sqrt{c_1^{-1}(1-\theta^{-1})d}}\theta^{-d}\big(1+\cO(1/\sqrt d)\big),
				\ee 
				with $C:=\e^{c_1^{-1}(1-\theta^{-1})/2}\sqrt{\pi}c_1^{-1/4+b/2}(1-\theta^{-1})^{1/4+b/2}.$
			\end{itemize} 
		\end{theorem}
		
		\begin{remark}
			The final results which consider the different cases of Assumption~\ref{ass:weights} also hold for $p_d$ instead of $p_{\geq d}$ when one adds a multiplicative constant $1-\theta^{-1}$ to the right-hand side.
		\end{remark} 
		
		The following proposition provides an asymptotic expression of the tail degree distribution of $k$ typical vertices under certain conditions~\cite[Proposition $5.1$]{EslLodOrt21}: 
		
		\begin{proposition}[Distribution of typical vertex degrees~\cite{EslLodOrt21}]\label{lemma:degprobasymp}
			Consider the WRT model, that is, the WRG as in Definition~\ref{def:wrg} with $m=1$, with vertex-weights $(W_i)_{\inn}$ which are i.i.d.\ copies of a positive random variable $W$ that satisfies Conditions~\ref{item:c1} and~\ref{item:c3} of Assumption~\ref{ass:weights}. Fix $k\in\N,c\in(0,\theta/(\theta-1))$, and let $(v_i)_{i\in[k]}$ be $k$ vertices selected uniformly at random without replacement from $[n]$. Then, uniformly over $d_i\leq c\log n,i\in[k]$,
			\be
			\P{\Zm_n(v_i)\geq d_i,i\in[k]}= \prod_{i=1}^k \E{\Big(\frac{W}{\E{W}+W}\Big)^{d_i}}(1+o(1)).
			\ee
		\end{proposition}
		
		Finally, we have the following three technical lemmas. The first deals with concentration of sums of i.i.d.\ random variables, the second with particular multiple integrals that we use in one of the proofs.
		
		\begin{lemma}[Bounds on partial sums of vertex-weights~\cite{EslLodOrt21}, Lemma $A.2$]\label{lemma:weightsumbounds}
			Let $(W_i)_{i\in\N}$ be i.i.d.\\ copies of a random variable $W$ with mean $\E W\in(0,1]$. Let $\eta\in(0,1), \delta\in(0,1/2)$, $k\in\N$, and set $\zeta_n:=n^{-\delta \eta}/\E{W}$. Consider the events 
			\be\ba  \label{eq:events}
			E_n^{(1)}&:=\bigg\{ \sum_{\ell=1}^j W_\ell \in ((1-\zeta_n)\E{W}j,(1+\zeta_{n})\E{W}j),\text{ for all } n^{\eta}\leq j\leq n\bigg\},\\
			E_n^{(2)}&:=\Big\{\sum_{\ell=k+1 }^j W_\ell\in((1-\zeta_n)\E{W}j,(1+\zeta_n)\E{W}j),\text{ for all } n^{\eps}\leq j\leq n\Big\}.
			\ea \ee 
			Then, for any $\gamma>0$ and any $i\in\{1,2\}$, for all $n$ large,
			\be 
			\P{(E_n^{(i)})^c}\leq n^{-\gamma}.
			\ee 
		\end{lemma} 
		
		\begin{lemma}[\cite{EslLodOrt21}, Lemma $A.4$]\label{lemma:logints}
			For any $k\in\N$ and any $0< a\leq b<\infty$, 
			\be \label{eq:logint}
			\int_a^b \int_{x_1}^b\cdots \int_{x_{k-1}}^b \prod_{j=1}^kx_j^{-1}\,\d x_k\ldots\d x_1= \frac{(\log(b/a))^k}{k!}.
			\ee 
			Similarly, for any $k\in\N$ and any $0< a\leq b-k<\infty$,
			\be \label{eq:loglb}
			\int_{a+1}^b\int_{x_1+1}^b\cdots \int_{x_{k-1}+1}^b\prod_{j=1}^{k}x_{j}^{-1}\,\d x_k\ldots \d x_1\geq \frac{(\log(b/(a+k)))^k}{k!}.
			\ee 
		\end{lemma} 
		
		\begin{lemma}[\cite{LodOrt21}, Lemma $5.1$]\label{lemma:logconv}
			Let $(W_i)_{i\in\N}$ be a sequence of strictly positive i.i.d.\ random variables which are almost surely bounded. Then, there exists an almost surely finite random variable $Y$ such that
			\be 
			\sum_{j=1}^{n-1}\frac{1}{S_j}-\frac{1}{\E W}\log n\toas Y.		
			\ee 		
		\end{lemma}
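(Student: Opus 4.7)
The plan is to compare $1/S_j$ to its law-of-large-numbers proxy $1/(j\E W)$ and to extract the logarithmic asymptotics from the harmonic sum. First, I would decompose
\[
\sum_{j=1}^{n-1} \frac{1}{S_j} - \frac{\log n}{\E W} = \sum_{j=1}^{n-1} \left( \frac{1}{S_j} - \frac{1}{j \E W} \right) + \frac{1}{\E W}\left( \sum_{j=1}^{n-1} \frac{1}{j} - \log n \right).
\]
The second, deterministic term converges to $\gamma/\E W$, where $\gamma$ is the Euler--Mascheroni constant, so the problem reduces to showing almost sure convergence of the first, random sum to a finite limit; the desired $Y$ is then the sum of the two.

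For the random remainder I would use the identity $1/S_j - 1/(j\E W) = (j\E W - S_j)/(j\E W\cdot S_j)$ together with the strong law of large numbers $S_j/j \to \E W$ almost surely (valid because $W$ is bounded). On the corresponding full-measure event there exists a random threshold $j_0$ with $S_j \geq j \E W/2$ for all $j \geq j_0$, and hence
\[
\left|\frac{1}{S_j} - \frac{1}{j \E W}\right| \leq \frac{2 \,|S_j - j \E W|}{j^2 (\E W)^2},\qquad j \geq j_0.
\]
To establish almost sure summability of this bound I would apply Cauchy--Schwarz to the second-moment identity $\E{(S_j - j\E W)^2} = j \Var(W)$ (finite since $W$ is bounded), obtaining $\E{|S_j - j\E W|} \leq \sqrt{j \Var(W)}$, so that by Tonelli
\[
\E{\sum_{j=1}^\infty \frac{|S_j - j \E W|}{j^2}} \leq \sqrt{\Var(W)} \sum_{j=1}^\infty j^{-3/2} < \infty,
\]
which forces $\sum_j |S_j - j \E W|/j^2 < \infty$ almost surely. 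The finitely many terms with $j < j_0$ contribute a random but finite amount since $S_j>0$ almost surely. Combining these estimates, the first sum converges absolutely, hence also conditionally, almost surely, producing the desired $Y$.

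The only real subtlety is the bookkeeping of combining a pointwise bound valid only past a random threshold $j_0$ with a summability estimate at the level of expectations; this is handled by intersecting the SLLN event with the almost sure finiteness event of $\sum_j |S_j - j\E W|/j^2$ and treating the finite prefix separately. No deeper tool, such as the law of the iterated logarithm, is needed; the argument relies only on the SLLN and a second-moment bound, both of which are immediate consequences of boundedness of $W$.
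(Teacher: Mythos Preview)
Your proof is correct. The paper does not supply its own proof of this lemma; it is quoted verbatim as Lemma~5.1 of \cite{LodOrt21} and used as a black box, so there is no in-paper argument to compare against. Your decomposition into the Euler--Mascheroni contribution plus the absolutely convergent random series $\sum_j(1/S_j-1/(j\E W))$, controlled via the second-moment bound $\E|S_j-j\E W|\le \sqrt{j\,\Var(W)}$ and Tonelli, is a clean and self-contained derivation.
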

		
		This lemma implies, in particular, that for any $i=i(n)$ such that $i$ diverges with an and $i=o(n)$ as $n\to\infty$, almost surely,
		\be\ba \label{eq:lemma}
		\sum_{j=i}^{n-1}\frac{1}{S_j}=\frac{1}{\E W}\log(n/i)(1+o(1)),\qquad
		\sum_{j=1}^{n-1}\frac{1}{S_j}=\frac{1}{\E W}\log(n)(1+o(1)).
		\ea \ee 
	
	\section{Location of the maximum degree vertices}\label{sec:mainproof}
	
	Let us, for ease of writing, set $\mu_m:=1-(\theta_m-1)/(\theta_m\log\theta_m)$, where we recall that $\theta_m:=1+\E W/m$. To make the intuitive idea presented in Section~\ref{sec:heur} precise, we use a careful union bound on the events $\{\max_{1\leq i\leq n^{\mu_m-\eps}}\zni\geq (1-\eta)\log_{\theta_m}\!n\}$ and $\{\max_{n^{\mu_m+\eps}\leq i\leq n}\zni\geq (1-\eta)\log_{\theta_m}\! n\}$ for arbitrary and fixed $\eps>0$ and some sufficiently small $\eta>0$.
	
	\begin{proof}[Proof of Theorem~\ref{thrm:bddloc}]
		As in the proofs of \cite[Theorem $2.9$]{LodOrt21} and \cite[Theorem $1$]{DevLu95}, we first prove the convergence holds in probability, and then discuss how to improve it to almost sure convergence.
		
		We take $\eta\in(0,1-\log(\theta_m)/(\theta_m-1))$ and write
		\be\ba \label{eq:locbound}
		\Pf{\Big|\frac{\log I_n}{\log n}-\mu_m\Big|\geq \eps}
		\leq {}&\Pf{\big\{I_n\leq n^{\mu_m-\eps}\}\cap \{\max_{\inn}\zni \geq (1-\eta)\log_{\theta_m}\!n\big\}}\\
		&+\Pf{\{I_n\geq n^{\mu_m+\eps}\big\}\cap \{\max_{\inn}\zni \geq (1-\eta)\log_{\theta_m}\!n\big\}}\\
		&+\Pf{\max_{\inn}\zni < (1-\eta)\log_{\theta_m}\!n}.
		\ea \ee 
		We deal with the first two terms on the right-hand side first and at the end use Theorem~\ref{thrm:bddmax} to deal with the final term. The first two probabilities can be bounded from above by
		\be \label{eq:locub}
		\Pf{ \max_{i\in[n^{\mu_m-\eps}]}\zni \geq (1-\eta)\log_{\theta_m}\!n}+\Pf{\max_{n^{\mu_m+\eps}\leq i\leq n}\zni \geq (1-\eta)\log_{\theta_m}\!n}.
		\ee 
		The aim is thus to show that vertices with a label `far away' from $n^{\mu_m}$ are unlikely to have a high degree. With $I_n^-:=n^{\mu_m-\eps}, I_n^+:=n^{\mu_m+\eps}$, we first apply a union bound to obtain the upper bound
		\be 
		\sum_{i\in[n]\backslash [I_n^-,I_n^+]}\Pf{\zni\geq (1-\eta)\log_{\theta_m}\!n}.
		\ee 
		With the same approach that leads to the upper bound in~\eqref{eq:chern}, that is, using a Chernoff bound with $t=\log((1-\eta)\log_{\theta_m}\!n)-\log\big(mW_i\sum_{j=i}^{n-1}1/S_j\big)$, we arrive at the upper bound
		\be\ba \label{eq:probbound}
		\sum_{\inn\backslash [I_n^-,I_n^+]}\!\!\!\!\!\!\!\e^{-t(1-\eta)\log_{\theta_m}\!\!n}\prod_{j=i}^{n-1}\Big(1+\big(\e^t-1\big)\frac{W_i}{S_j}\Big)^m\leq\sum_{\inn\backslash [I_n^-,I_n^+]}\!\!\!\!\!\!\e^{-(1-\eta)\log_{\theta_m}\!\!n(u_i-1-\log u_i)},
		\ea \ee 
		where
		\be \label{eq:ui}
		u_i:=\frac{mW_i}{(1-\eta)\log_{\theta_m}\!n}\sum_{j=i}^{n-1} \frac{1}{S_j}.
		\ee 
		We now set
		\be \ba
		\delta:=\min\bigg\{{}&\frac{1-\eta}{2\log\theta_m}\bigg(\frac{\log\theta_m}{(\theta_m-1)(1-\eta)}-1-\log\bigg(\frac{\log\theta_m}{(\theta_m-1)(1-\eta)}\bigg)\bigg),\\
		&-\frac{(\theta_m-1)(1-\eta)}{2\log \theta_m}W_0\big(-\theta_m^{-1/(1-\eta)}\e^{-1}\big)\bigg\},
		\ea \ee
		with $W_0$ the (main branch of the) $W$ Lambert function, the inverse of $f:[-1,\infty)\to [-1/\e,\infty)$, $f(x):=x\e^x$. Note that, when $\eps$ is sufficiently small, $\delta\in(0,\min\{\mu_m-\eps,1-\mu_m-\eps\})$. We use this $\delta$ to split the union bound in~\eqref{eq:probbound} into three parts: 
		\be\ba \label{eq:s123}
		R_1&:=\sum_{i=1}^{\lfloor n^{\delta}\rfloor} \e^{-(1-\eta)\log_{\theta_m}\!\!n(u_i-1-\log u_i)},\\
		R_2&:=\sum_{i=\lceil n^{1-\delta}\rceil}^n \e^{-(1-\eta)\log_{\theta_m}\!\!n(u_i-1-\log u_i)},\\
		R_3&:=\sum_{i\in [n^\delta, n^{1-\delta}]\backslash [I_n^-,I_n^+]}\e^{-(1-\eta)\log_{\theta_m}\!\!n(u_i-1-\log u_i)},
		\ea \ee 
		and we aim to show that each of these terms converges to zero with $n$ almost surely. For $R_1$ we use that uniformly in $i\leq n^\delta$, almost surely
		\be \label{eq:uibound1}
		u_i\leq \frac{m}{(1-\eta)\log_{\theta_m}\!n}\sum_{j=1}^{n-1}\frac{1}{S_j}=\frac{\log\theta_m}{(1-\eta)(\theta_m-1)}(1+o(1)),
		\ee 
		where the final step follows from Lemma~\ref{lemma:logconv}. Using that the upper bound is at most $1$ by the choice of $\eta$, that $x\mapsto x-1-\log x$ is decreasing on $(0,1)$ and using this in $R_1$ in~\eqref{eq:s123}, we bound $R_1$ from above by  
		\be \ba \label{eq:s1bound}
		\sum_{i=1}^{\lfloor n^\delta\rfloor}{}&\exp\Big(-\frac{(1-\eta)\log n}{\log\theta_m}\Big( \frac{\log\theta_m}{(1-\eta)(\theta_m-1)}-1-\log\Big(\frac{\log\theta_m}{(1-\eta)(\theta_m-1)}\Big)\Big)(1+o(1))\Big)\\
		={}&\exp\Big(\log n\Big(\delta-\frac{1-\eta}{\log\theta_m}\Big( \frac{\log\theta_m}{(1-\eta)(\theta_m-1)}-1-\log\Big(\frac{\log\theta_m}{(1-\eta)(\theta_m-1)}\Big)\Big)(1+o(1))\Big),
		\ea \ee 
		which converges to zero by the choice of $\delta$. In a similar way, uniformly in $n^{1-\delta}\leq i\leq n$, almost surely
		\be \label{eq:uibound2}
		u_i\leq \frac{m}{(1-\eta)\log_{\theta_m}\!n}\sum_{j=\lceil n^{1-\delta}\rceil }^{n-1}\frac{1}{S_j}=\frac{\delta\log\theta_m}{(1-\eta)(\theta_m-1)}(1+o(1)),
		\ee 
		so that we can bound $R_2$ from above by
		\be\ba \label{eq:s2bound}
		\sum_{i= \lceil n^{1-\delta}\rceil}^n\!\!\!\!\!\!{}&\exp\Big(-(1-\eta)\log_{\theta_m}\! n\Big( \frac{\delta\log\theta_m}{(1-\eta)(\theta_m-1)}-1-\log\Big(\frac{\delta\log\theta_m}{(1-\eta)(\theta_m-1)}\Big)\Big)(1+o(1))\Big)\\
		={}&\exp\Big(\log n\Big(1-\frac{1-\eta}{\log\theta_m}\Big( \frac{\delta\log\theta_m}{(1-\eta)(\theta_m-1)}-1-\log\Big(\frac{\delta\log\theta_m}{(1-\eta)(\theta_m-1)}\Big)\Big)(1+o(1))\Big).
		\ea\ee 
		Again, by the choice of $\delta$, the exponent is strictly negative, so that the upper bound converges to zero with $n$. It remains to bound $R_3$. We aim to approximate the sum by an integral, using the same approach as in the proof of~\cite[Theorem $2.9$, Bounded case]{LodOrt21}. We first bound $u_i\leq m(H_n-H_i)/((1-\eta)\log_{\theta_m}\! n)=:\wt u_i$ almost surely for any $\inn$, where $H_n:=\sum_{j=1}^{n-1}1/S_j$. Then, define $u:(0,\infty)\to \R$ and $\phi:(0,\infty)\to\R$ by 
		\be 
		u(x):=\Big(1-\frac{\log x}{\log n}\Big)\frac{\log\theta_m}{(1-\eta)(\theta_m-1)},\quad \text{and}\quad \phi(x):=x-1-\log x, \qquad x>0. 
		\ee 
		For $i$ in $[n^\delta, n^{1-\delta}]\backslash [I_n^-,I_n^+]$ such that $i=n^{\beta+o(1)}$ for some $\beta\in[\delta,1-\delta]$ (where the $o(1)$ is independent of $\beta$) and $x\in[i,i+1)$, 
		\be\ba\label{eq:phidiff}
		|\phi(\wt u_i)-\phi(u(x))|\leq{}& |\wt u_i-u(x)|+|\log(\wt u_i/u(x))|\\
		={}&\bigg|\frac{\log\theta_m}{(1-\eta)(\theta_m-1)}\Big(1-\frac{\log x}{\log n}\Big)-\frac{\log\theta_m}{(1-\eta)(\theta_m-1)\log n}\sum_{j=i}^{n-1}\frac{1}{S_j}\bigg|\\
		&+\bigg|\log\bigg(\frac{\E W}{\log n-\log x}\sum_{j=i}^{n-1}\frac{1}{S_j}\bigg)\bigg|.
		\ea\ee 
		By~\eqref{eq:lemma} and since $i$ diverges with $n$, $\sum_{j=i}^{n-1}1/S_j-\log(n/i)/\E W=o(1)$ almost surely as $n\to\infty$. Applying this to the right-hand side of~\eqref{eq:phidiff} yields
		\be \label{eq:phidiff2}
		|\phi(\wt u_i)-\phi(u(x))|\leq \frac{\log\theta_m}{(1-\eta)(\theta_m-1)}\Big|\frac{\log x-\log i}{\log n}\Big|+\Big|\log\Big(1+\frac{\log x-\log i+o(1)}{\log n-\log x}\Big)\Big|.
		\ee 
		Since $x\geq i\geq n^\delta$ and $|x-i|\leq 1$, we thus obtain that, uniformly in $[n^\delta, n^{1-\delta}]\backslash [I_n^-,I_n^+]$ and $x\in[i,i+1)$, we have $|\phi(\wt u_i)-\phi(u(x))|=o(1/(n^\eps\log n))$ almost surely as $n\to\infty$. Applying this to $R_3$ in~\eqref{eq:s123} yields the upper bound
		\be\ba \label{eq:intbound}
		\sum_{i\in [n^\delta, n^{1-\delta}]\backslash [I_n^-,I_n^+]}\!\!\!\!\!\!\!\!\!\!{}&\e^{-(1-\eta)\phi(\wt u_i)\log_{\theta_m}\! n}\\
		\leq{}& \sum_{i\in [n^\delta, n^{1-\delta}]\backslash [I_n^-,I_n^+]}\int_i^{i+1}\e^{-(1-\eta)\log_{\theta_m}\! n( \phi(u(x))+|\phi(\wt u_i)-\phi(u(x))|)}\,\d x\\
		\leq {}&(1+o(1))\int_{[n^\delta,n^{1-\delta}]\backslash [I_n^-I_n^+]}\e^{-(1-\eta)\phi(u(x))\log_{\theta_m}\! n}\,\d x.
		\ea \ee  
		Using the variable transformation $w=\log x/\log n$ and setting $U:=[\delta,1-\delta]\backslash[\mu_m-\eps,\mu_m+\eps]$ yields 
		\be \ba\label{eq:uexp}
		(1{}&+o(1))\int_{U}\exp\Big(-\log n\frac{1-\eta}{\log \theta_m}\phi\Big(\frac{(1-w)\log \theta_m}{(1-\eta)(\theta_m-1)}\Big)\Big)n^w\log n \, \d w\\
		&=(1+o(1))\int_{U}\exp\Big(-\log n\Big(\frac{1-\eta}{\log \theta_m}\phi\Big(\frac{(1-w)\log \theta_m}{(1-\eta)(\theta_m-1)}\Big)-w\Big)+\log\log n\Big) \, \d w.
		\ea\ee 
		We now observe that the mapping
		\be 
		w\mapsto \frac{1-\eta}{\log \theta_m}\phi\Big(\frac{(1-w)\log\theta_m}{(1-\eta)(\theta_m-1)}\Big)
		\ee 
		has two fixed points, namely
		\be \ba\label{eq:wfix}
		w^{(1)}&:=1+\frac{(1-\eta)(\theta_m-1)}{\theta_m\log\theta_m}W_0\big(-\theta_m^{-\eta/(1-\eta)}\e^{-1}\big), \\ w^{(2)}&:=1+\frac{(1-\eta)(\theta_m-1)}{\theta_m\log\theta_m}W_{-1}\big(-\theta_m^{-\eta/(1-\eta)}\e^{-1}\big),
		\ea \ee 
		where we recall that $W_0$ is the inverse of $f:[-1,\infty)\to [-1/\e,\infty)$, $f(x)=x\e^x$, also known as the main branch of the Lambert $W$ function, and where $W_{-1}$ is the inverse of $g:(-\infty,-1]\to (-\infty,-1/\e]$, $g(x)=x\e^x$, also known as the negative branch of the Lambert $W$ function. Moreover, the following inequalities hold as well:
		\be \ba\label{eq:wineq}
		w&<\frac{1-\eta}{\log \theta_m}\phi\Big(\frac{(1-w)\log\theta_m}{(1-\eta)(\theta_m-1)}\Big), \qquad w\in(0,w^{(2)}),\ w\in(w^{(1)},1), \\
		w&>\frac{1-\eta}{\log \theta_m}\phi\Big(\frac{(1-w)\log\theta_m}{(1-\eta)(\theta_m-1)}\Big), \qquad w\in(w^{(2)},w^{(1)}),
		\ea \ee
		and we claim that the following statements hold:
		\be\label{eq:wclaims}
		\forall\  \eta>0\text{ sufficiently small},\ w^{(2)}<\mu_m<w^{(1)},\quad\text{ and }\quad\;  \lim_{\eta\downarrow 0}w^{(1)}=\lim_{\eta\downarrow 0}w^{(2)}=\mu_m.
		\ee
		We defer the proof of these inequalities and claims to the end. For now, let us use these properties and set $\eta$ sufficiently small so that $\mu_m-\eps<w^{(2)}<\mu_m<w^{(1)}<\mu_m+\eps$, so that $U\subset [\delta,w^{(2)})\cup (w^{(1)},1-\delta]$. If we define 
		\be \label{eq:wu}
		\phi'_U:=\inf_{w\in U}\Big[\frac{1-\eta}{\log \theta_m}\phi\Big(\frac{(1-w)\log \theta_m}{(1-\eta)(\theta_m-1)}\Big)-w\Big],
		\ee
		then it follows from the choice of $\eta$, from~\eqref{eq:wineq} and the definition of $U$ that $\phi'_U>0$, so that the integral in~\eqref{eq:uexp} can be bounded from above by  
		\be \label{eq:finbound}
		(1+o(1))\exp\Big(-\phi'_U\log n+\log\log n\Big),
		\ee 
		which converges to zero with $n$. We have thus established that $R_1, R_2, R_3$ converge to zero almost surely as $n$ tends to infinity. Combined, this yields that the upper bound in~\eqref{eq:probbound} converges to zero almost surely, so that together with~\eqref{eq:locub} we thus find that 
		\be\ba  \label{eq:condconv}
		\mathbb P_W{}&\Big(\big\{I_n\leq n^{\mu_m-\eps}\}\cap \{\max_{\inn}\zni \geq (1-\eta)\log_{\theta_m}\!n\big\}\Big)\\
		&+\Pf{\{I_n\geq n^{\mu_m+\eps}\big\}\cap \{\max_{\inn}\zni \geq (1-\eta)\log_{\theta_m}\!n\big\}}\toas 0, 
		\ea \ee 
		We now return to~\eqref{eq:locbound}. Taking the mean yields
		\be \ba 
		\limsup_{n\to\infty}\P{\Big|\frac{\log I_n}{\log n}-\mu_m\Big|\geq \eps}\leq{}& \limsup_{n\to\infty}\mathbb E\bigg[\Pf{\big\{I_n\leq n^{\mu_m-\eps}\}\cap \{\max_{\inn}\zni \geq (1-\eta)\log_{\theta_m}\!n\big\}}\\
		{}&+\Pf{\{I_n\geq n^{\mu_m+\eps}\big\}\cap \{\max_{\inn}\zni \geq (1-\eta)\log_{\theta_m}\!n\big\}}\bigg]\\
		&+\limsup_{n\to\infty}\P{\max_{\inn}\zni<(1-\eta)\log_{\theta_m}\! n}.
		\ea\ee 
		Using the uniform integrability of the conditional probability (this is clearly the case as the conditional probability is bounded from above by one) combined with~\eqref{eq:condconv} implies that the first limsup on the right-hand side equals zero. The second limsup also equals zero by Theorem~\ref{thrm:bddmax}. Since $\eps>0$ is arbitrary, this proves that $\log I_n/\log n\toinp \mu_m$.
		
		Now that we have obtained the convergence in probability of $\log I_n/\log n$ to $\mu_m$, we strengthen it to almost sure convergence. We obtain this by constructing the following inequalities: First, for any $\eps\in(0,\mu_m)$, using the monotonicity of $\max_{i\in[n^{\mu_m-\eps}]}\zni$ and $\log_{\theta_m}\!n$, 
		\be \ba
		\sup_{2^N\leq n}\!\frac{\max_{i\in[ n^{\mu_m-\eps}]}\zni}{\log_{\theta_m}\!n}&=\sup_{k\in\N}\sup_{2^{N+(k-1)}\leq n<2^{N+k}}\!\!\!\frac{\max_{i\in[ n^{\mu_m-\eps}]}\zni}{\log_{\theta_m}\!n}\\
		&\leq \sup_{N\leq n}\frac{\max_{i\in[2^{(n+1)(\mu_m-\eps)}]}\Zm_{2^{n+1}}(i)}{n\log_{\theta_m}\!2}.
		\ea \ee 
		With only a minor modification, we can obtain a similar result for $\max_{n^{\mu_m+\eps}\leq i\leq n}\zni$, where now $\eps\in(0,1-\mu_m)$. Here, we can no longer use that this maximum is monotone. Rather, we write
		\be \ba
		\sup_{2^N\leq n}\!\frac{\max_{ n^{\mu_m+\eps}\leq i\leq n}\zni}{\log_{\theta_m}\!n}&=\sup_{k\in\N}\sup_{2^{N+(k-1)}\leq n<2^{N+k}}\!\!\!\frac{\max_{n^{\mu_m+\eps}\leq i\leq n}\zni}{\log_{\theta_m}\!n}\\
		&\leq \sup_{k\in\N}\frac{\max_{2^{(N+(k-1))(\mu_m+\eps)}\leq i\leq 2^{N+k}}\Zm_{2^{N+k}}(i)}{(N+(k-1))\log_{\theta_m}\!2}\\
		&=\sup_{N\leq n}\frac{\max_{2^{n(\mu_m+\eps)}\leq i\leq 2^{n+1}}\Zm_{2^{n+1}}(i)}{n\log_{\theta_m}\!2}.
		\ea \ee 
		It thus follows that, for any $\eta>0$,
		\be \label{eq:normbounds}
		\limsup_{n\to\infty} \frac{\max_{i\in[n^{\mu_m-\eps}]}\zni}{(1-\eta)\log_{\theta_m}\!n}\leq 1,\quad\limsup_{n\to\infty}\frac{\max_{n^{\mu_m+\eps}\leq i\leq n}\zni}{(1-\eta)\log_{\theta_m}\!n}\leq 1, \quad \mathbb P_W\text{-a.s.,}
		\ee 
		are implied by 
		\be\ba \label{eq:expbounds}
		\limsup_{n\to\infty}\frac{\max_{i\in[2^{(n+1)(\mu_m-\eps)}]}\Zm_{2^{n+1}}(i)}{(1-\eta) n\log_{\theta_m}\!2}&\leq 1,\quad &\mathbb P_W-\text{a.s.},&\\
		\limsup_{n\to\infty} \frac{\max_{2^{n(\mu_m+\eps)}\leq i\leq 2^{n+1}}\Zm_{2^{n+1}}(i)}{(1-\eta) n\log_{\theta_m}\!2}&\leq 1,\quad &\mathbb P_W-\text{a.s.},&
		\ea \ee 
		respectively. We start by proving the first inequality in~\eqref{eq:expbounds}. Define 
		\be \ba
		\mathcal E_n^1&:=\big\{\max_{i\in[2^{(n+1)(\mu_m-\eps)}]}\Zm_{2^{n+1}}(i)> (1-\eta)n\log_{\theta_m}\!2\big\}, \\ \mathcal E_n^2&:=\big\{\max_{2^{n(\mu_m+\eps)}\leq i\leq 2^{n+1}}\Zm_{2^{n+1}}(i)>(1-\eta)n\log_{\theta_m}\! 2\big\}.
		\ea\ee 
		Let us abuse notation to write $I_n^-=2^{(n+1)(\mu_m-\eps)}, I_n^+=2^{n(\mu_m+\eps)}$. By a union bound, we again find
		\be \ba\label{eq:expsplit}
		\P{\mathcal E_n^1\cup \mathcal E_n^2}\leq{}& \sum_{i=1}^{\lfloor 2^{(n+1)\delta}\rfloor }\P{\Zm_{2^{n+1}}(i)>(1-\eta)n\log_{\theta_m}\!2}\\
		&+\sum_{i=\lceil 2^{(n+1)(1-\delta)}\rceil }^{2^{n+1}}\P{\Zm_{2^{n+1}}(i)>(1-\eta)n\log_{\theta_m}\!2}\\
		&+\sum_{i\in[ 2^{(n+1)\delta},2^{(n+1)(1-\delta)}]\backslash [I_n^-,I_n^+]}\P{\Zm_{2^{n+1}}(i)>(1-\eta)n \log_{\theta_m}\! 2},
		\ea \ee 
		and these tree sums are the equivalence of $R_1,R_2,R_3$ in~\eqref{eq:s123}. We again take $\eta$ small enough so that $\mu_m-\eps<w^{(2)}<\mu_m<w^{(1)}<\mu_m+\eps$, where we recall $w^{(1)}, w^{(2)}$ from~\eqref{eq:wfix}. With the same steps as in~\eqref{eq:probbound}, \eqref{eq:uibound1}, and~\eqref{eq:s1bound}, we obtain that we can almost surely bound the first sum on the right-hand side from above by 
		\be\ba
		\sum_{i=1}^{\lfloor 2^{(n+1)\delta}\rfloor}{}&\exp\Big(-\frac{(1-\eta)n\log 2}{\log\theta_m}\Big( \frac{\log\theta_m}{(1-\eta)(\theta_m-1)}-1-\log\Big(\frac{\log\theta_m}{(1-\eta)(\theta_m-1)}\Big)\Big)(1+o(1))\Big)\\
		={}&\exp\Big(n\log 2\Big(\delta-\frac{1-\eta}{\log\theta_m}\Big( \frac{\log\theta_m}{(1-\eta)(\theta_m-1)}-1-\log\Big(\frac{\log\theta_m}{(1-\eta)(\theta_m-1)}\Big)\Big)(1+o(1))\Big),
		\ea \ee 
		which is summable by the choice of $\delta$. Similarly, using the same steps as in~\eqref{eq:uibound2} and~\eqref{eq:s2bound}, we can almost surely bound the second sum on the right-hand side of~\eqref{eq:expsplit} from above by 
		\be\ba 
		\sum_{i= \lceil 2^{(n+1)(1-\delta)}\rceil}^{2^n}\!\!\!\!\!\!{}&\exp\Big(-\frac{(1-\eta)n\log 2}{\log \theta_m}\Big( \frac{\delta\log\theta_m}{(1-\eta)(\theta_m-1)}-1-\log\Big(\frac{\delta\log\theta_m}{(1-\eta)(\theta_m-1)}\Big)\Big)(1+o(1))\Big)\\
		={}&\exp\Big(n\log 2\Big(1-\frac{1-\eta}{\log\theta_m}\Big( \frac{\delta\log\theta_m}{(1-\eta)(\theta_m-1)}-1-\log\Big(\frac{\delta\log\theta_m}{(1-\eta)(\theta_m-1)}\Big)\Big)(1+o(1))\Big),
		\ea\ee 
		which again is summable by the choice of $\delta$. Finally, the last sum on the right-hand side of~\eqref{eq:expsplit} can be approximated by an integral, as in~\eqref{eq:intbound}. By the choice of $\eta$, we can then use the same steps as in~\eqref{eq:uexp} through~\eqref{eq:finbound} to obtain the almost sure upper bound
		\be 
		(1+o(1))\exp\Big(-n\phi'_U\log 2(1+o(1))+\log n+\mathcal O(1)\Big),
		\ee 
		which again is summable. As a result, $\mathbb P_W$-almost surely $\mathcal E_n^1\cup \mathcal E_n^2$ occurs only finitely often by the Borel-Cantelli lemma. This implies that both bounds in~\eqref{eq:expbounds} hold, which imply the bounds in~\eqref{eq:normbounds}.  Defining the events 
		\be \ba
		\mathcal C^1_n&:=\{|\log I_n/\log n-\mu_m|\geq \eps\}, \quad \mathcal C^2_n:=\big\{I_n\leq n^{\mu_m-\eps}\big\}, \quad \mathcal C^3_n:=\big\{I_n\geq n^{\mu_m+\eps}\big\},\\ \mathcal C^4_n&:=\big\{\max_{\inn}\zni> (1-\eta)\log_{\theta_m}\!n\big\},
		\ea\ee 
		we can use the same approach as in~\eqref{eq:locbound} to bound
		\be 
		\sum_{n=1}^\infty \ind_{\mathcal C^1_n}\leq \sum_{n=1}^\infty \ind_{\mathcal C^2_n\cap \mathcal C^4_n}+\ind_{\mathcal C^3_n\cap\mathcal  C^4_n}+\ind_{(\mathcal C^4_n)^c}.
		\ee 
		By the proof of Theorem~\ref{thrm:bddmax} in~\cite{LodOrt21},  $(\mathcal C^4_n)^c$ occurs for finitely many $n$ $\mathbb P_W$-almost surely (not just $\mathbb P$-almost surely as follows directly from Theorem~\ref{thrm:bddmax}). The bounds in~\eqref{eq:normbounds} imply that $\mathbb P_W$-almost surely the events $\mathcal C^2_n\cap\mathcal  C^4_n$ and $\mathcal C^3_n\cap \mathcal C^4_n$ occur for only finitely many $n$, via a similar reasoning as in~\eqref{eq:locub}. Combined, we obtain that $\mathcal C^1_n$ occurs only finitely many times $\mathbb P_W$-almost surely. As a final step we write
		\be \ba
		\mathbb P({}&\forall\, \eps>0\ \exists\, N\in \N: \forall\, n\geq N\ |\log I_n/\log n-\mu_m|<\eps)\\
		&=\E{\Pf{\forall\, \eps>0\, \exists\, N\in \N: \forall\, n\geq N |\log I_n/\log n-\mu_m|<\eps}}=1,
		\ea \ee 
		so that $\log I_n/\log n\xrightarrow{\mathbb P\text{-a.s.}} \mu_m$.
		
		It remains to prove the inequalities in~\eqref{eq:wineq} and the claims in~\eqref{eq:wclaims}. Let us start with the inequalities in~\eqref{eq:wineq}.  We compute 
		\be \label{eq:der}
		\frac{\d }{\d w}\Big(w-\frac{1-\eta}{\log\theta_m}\phi\Big(\frac{(1-w)\log\theta_m}{(1-\eta)(\theta_m-1)}\Big)\Big)=1+\frac{1}{\theta_m-1}-\frac{1-\eta}{\log \theta_m}\frac{1}{1-w},
		\ee  
		which equals zero when $w=w^*:=1-(1-\eta)(\theta_m-1)/(\theta_m\log\theta_m)$, is positive when $w\in(0,w^*)$ and is negative when $w\in(w^*,1)$. Moreover, as $W_0(x)\geq -1$ for all $x\in[-1/\e,\infty)$ and $ W_{-1}(x)\leq -1$ for all $x\in[-1/\e,0)$, it follows from the definition of $w^{(1)}$ and $w^{(2)}$ in~\eqref{eq:wfix} that $w^{(2)}<w^*<w^{(1)}$ for any choice of $\eta>0$. This implies both inequalities in~\eqref{eq:wineq}.
		
		We now prove the claims in~\eqref{eq:wclaims}. Again using that $W_0(x)\geq -1$ for all $x\in[-1/\e,\infty)$ directly yields $w^{(1)}>\mu_m$. The inequality $w^{(2)}<\mu_m$ is implied by    
		\be 
		W_{-1}\big(-\theta_m^{-\eta/(1-\eta)}\e^{-1}\big)<-\frac{1}{1-\eta},
		\ee
		or, equivalently, 
		\be 
		-\theta_m^{-\eta/(1-\eta)}\e^{-1}>-\frac{1}{1-\eta}\e^{-1/(1-\eta)}.
		\ee 
		Setting $\beta:=1/(1-\eta)$ yields
		\be 
		\frac{\theta_m}{\e}<\beta \Big(\frac{\theta_m}{\e}\Big)^{\beta}.
		\ee 
		This inequality is then satisfied when $\beta\in(1,W_{-1}(\log(\theta_m/\e)\theta_m/\e)/\log(\theta_m/\e))$, or, equivalently, when $\eta\in(0,1-\log(\theta_m/\e)/W_{-1}(\log(\theta_m/\e)\theta_m/\e))$, as required. 
		By the definition of $w^{(1)}$ and $w^{(2)}$ in~\eqref{eq:wfix} and since $\mu_m:=1-(\theta_m-1)/(\theta_m\log \theta_m)$, the second claim in~\eqref{eq:wclaims} directly follows from the continuity of $W_0$ and $W_{-1}$ and since $W_0(-1/\e)=W_{-1}(-1/\e)=-1$, which concludes the proof.
	\end{proof}
	
	\section{Higher-order behaviour of the location of high-degree vertices}\label{sec:highproof}
	
	In this section we provide a more detailed insight into the behaviour of the degree and location of high-degree vertices when considering the Weighted Recursive Tree (WRT) model; the WRG model with out-degree $m=1$. Under additional assumptions on the vertex-weights, as in Assumption~\ref{ass:weights}, we are able to extend the result of Theorem~\ref{thrm:bddloc} to higher-order results for the location as well as to \emph{all} high-degree vertices, rather than just the maximum-degree vertices.
	
	The approach taken here is an improvement and extension of the methodology used by Eslava, the author and Ortgiese in~\cite{EslLodOrt21}. In that paper, we study the maximum degree of the WRT model with bounded vertex-weights, and we improve and extend those results in this section.
	
	The approach used in~\cite{EslLodOrt21} is to obtain a precise asymptotic estimate for the probability that $k$ vertices $v_1,\ldots, v_k$, selected uniformly at random without replacement from $[n]$, have degrees at least  $d_1, \ldots, d_k$, respectively, for any $k\in\N$. One of the difficulties in proving this estimate is to show that the probability of this event, conditionally on $\CE_n:=\cup_{i=1}^k \{v_i\leq n^{\eta}\}$ for some arbitrarily small $\eta>0$, is sufficiently small. On $\CE_n$ it is harder to control sums of the first $v_i$ many vertex-weights, as one cannot apply the law of large numbers easily, as opposed to when conditioning on $\CE_n^c$. This is eventually overcome by assuming that the vertex-weight distribution satisfies Condition~\ref{item:c3}, which limits the range of vertex-weight distributions for which the results discussed in~\cite{EslLodOrt21} hold.
	
	Here, we compute an asymptotic estimate for the probability that the degree of $v_i$ is at least $d_i$ \emph{and} that $v_i$ is at least $\ell_i$ for all $i\in[k]$, where the $(\ell_i)_{i\in[k]}$ satisfy $\ell_i\geq n^\eta$ for all $i\in[k]$ and some $\eta\in(0,1)$. The two main advantages of considering this event are that the issues described in the previous paragraph are circumvented, and that for a correct parametrisation of the $\ell_i$ we obtain some precise results on the location of high-degree vertices.
	
	\subsection{Convergence of marked point processes via finite dimensional distributions}\label{sec:pppfdd}
	
	$\,$\\ We first discuss some theoretical preparations to prove Theorem~\ref{thrm:deglocwrt}, after which we state the required intermediate results that we use in the proofs of Theorems~\ref{thrm:conddegloc} and~\ref{thrm:deglocwrt}. Recall the following notation: $d_n^i$ and $\ell_n^i$ denote the degree  and label of the vertex with the $i^{\text{th}}$ largest degree, respectively, $\inn$, where ties are split uniformly at random, and let us write $\theta=\theta_1:=1+\E W, \mu=\mu_1:=1-(\theta-1)/(\theta\log \theta)$ and define $\sigma^2:=1-(\theta-1)^2/(\theta^2\log \theta)$. To prove Theorem~\ref{thrm:deglocwrt} we view the tuples
	\be 
	\Big(d_n^i-\lfloor \log_\theta n\rfloor, \frac{\log \ell_n^i-\mu\log n}{\sqrt{(1-\sigma^2)\log n}}\Big)_{ \inn} 
	\ee
	as a marked point process, where the rescaled degrees form the points and the rescaled labels form the marks of the points. Let $\Z^*:=\Z \cup \{\infty\}$ and endow $\Z^*$ with the metric $d(i,j)=|2^{-i}-2^{-j}|$ and $d(i,\infty)=2^{-i}$ for $ i,j\in\Z$. We work with $\Z^*$ rather than $\Z$, as sets $[i,\infty]$ for $i\in\Z$ are now compact, which provides an advantage later on. Let $\CP$ be a Poisson point process on $\R$ with intensity $\lambda( x):=q_0\theta^{-x}\log \theta\,\d x$ and let $(\xi_x)_{x\in\CP}$ be independent standard normal random variables. For $\eps\in[0,1]$, we define the ground process $\CP^\eps$ on $\Z^*$ and the marked processes $\CM\CP^\eps$ on $\Z^*\times \R$ by 
	\be\label{eq:limppp}
	\CP^\eps:=\sum_{x\in\CP}\delta_{\lfloor x+\eps\rfloor}, \qquad \CM\CP^\eps:=\sum_{x\in \CP}\delta_{(\lfloor x+\eps\rfloor,  \xi_x)},
	\ee  
	where $\delta$ is a Dirac measure. Similarly, we can define
	\be
	\CP^{(n)}:=\sum_{i=1}^n \delta_{\zni-\lfloor\log_\theta n\rfloor}, \qquad \CM\CP^{(n)}:=\sum_{i=1}^n \delta_{(\zni-\lfloor\log_\theta  n\rfloor, (\log i-\mu\log n)/\sqrt{(1-\sigma^2)\log n})}.
	\ee 
	We then let $\CM_{\Z^*}^\#$ and $ \CM_{\Z^*\times\R}^\#$ be the spaces of boundedly finite measures on $\Z^*$ and $ \Z^*\times \R$ (which, in this case, corresponds to locally finite  measures) equipped with the vague topology, respectively. We observe that $\CP^{(n)}$ and $\CP^\eps$ are random elements of $\CM_{\Z^*}^\#$, and $\CM\CP^{(n)}$ and $\CM\CP^\eps$ are random elements of $ \CM_{\Z^*\times \R}^\#$, respectively. Theorem~\ref{thrm:deglocwrt} is then equivalent to the weak convergence of $\CM\CP^{(n_j)}$ to $\CM\CP^\eps$ in $\CM_{\Z^*\times \R}^\#$ along suitable subsequences $(n_j)_{j\in\N}$, as we can order the points in the definition of $\CM\CP^{(n)}$ (and $\CM\CP^\eps$) in decreasing order of their degrees (of the points $x\in \CP$). We remark that the weak convergence of $\CP^{(n_j)}$ to $\CP^{\eps}$ in $\CM_{\Z^*}^\#$ along subsequences when the vertex-weights of the WRT belong to the~\ref{ass:weightatom} case has been established by Eslava, the author, and Ortgiese in~\cite{EslLodOrt21} (and for the particular case of the random recursive tree by Addario-Berry and Eslava in~\cite{AddEsl18}). We extend these results, among others, to the tuple of degree and label.
	
	The approach we shall use to prove the weak convergence of $\CM\CP^{(n_j)}$ is to show that its finite dimensional distributions (FDDs) converge along subsequences. The FDDs of a random measure $\CP$ are defined as the joint distributions, for all finite families of bounded Borel sets $(B_1, \ldots, B_k)$, of the random variables $(\CP(B_1), \ldots, \CP(B_k))$, see~\cite[Definition $9.2.$II]{DalVer08}. Moreover, by~\cite[Proposition $9.2.$III]{DalVer08}, the distribution of a random measure $\CP$ on $\CX$ is completely determined by the FDDs for all finite families $(B_1, \ldots, B_k)$ of \emph{disjoint} sets from a semiring $\CA$ that generates $\CB(\CX)$. In our case, we consider the marked point process $\CM\CP^{(n)}$ on $\CX:=\Z^*\times \R$, see~\eqref{eq:limppp}. Hence, we let
	\be \label{eq:Awrt}
	\CA:=\{\{j\}\times (a,b]: j\in \Z, a,b\in \R\}\cup \{[j,\infty]\times (a,b]: j\in \Z, a,b\in \R\}
	\ee
	be the semiring that generates $\CB(\Z^*\times \R)$. The choice of the metric on $\Z^*$ is convenient, since now weak convergence in $\Z^*\times \R$ is equivalent to the convergence of the finite dimensional distributions by~\cite[Theorem $11.1.$VII]{DalVer08}. So, the weak convergence of the measure $\CM\CP^{(n_j)}$ to $\CM\CP^\eps$ in $\CM^\#_{\Z^*\times \R}$ is equivalent to the convergence of the FDDs of $\CM\CP^{(n_j)}$ to the FDDs of $\CM\CP^\eps$. It thus suffices to prove the joint convergence of the counting measures of finite collections of disjoint subsets of $\CA$. In particular, the weak convergence of $\CM\CP^{n_j}$ implies the distributional convergence of $X_{\geq j}^{(n_j)}(B)=\CM\CP^{(n_\ell)}([j,\infty))$ for any $\{j\}\times B\in \cA$.
	
	Recall the Poisson point process $\CP$ used in the definition of $\CP^\eps$ in~\eqref{eq:limppp} and enumerate its points in decreasing order. That is, $P_i$ denotes the $i^{\text{th}}$ largest point of $\CP$ (ties broken uniformly at random). We observe that this is well-defined, since $\CP([x,\infty))<\infty$ almost surely for any $x\in \R$. Let $(M_i)_{i\in\N}$ be a sequence of i.i.d.\ standard normal random variables. For $\{j\}\times B\in \CA$, we then define 
	\be \ba \label{eq:xnwrt}
	X^{(n)}_j(B)&:=\Big|\Big\{\inn: \zni=\lfloor \log_\theta n\rfloor +j,  \frac{\log i-(\log n-(1-\theta^{-1})(\lfloor \log_\theta n\rfloor +j))}{\sqrt{(1-\theta^{-1})^2(\lfloor \log_\theta n\rfloor+j)}}\in B\Big\}\Big|,\\
	X^{(n)}_{\geq j}(B)&:=\Big|\Big\{\inn: \zni\geq\lfloor \log_\theta n\rfloor +j,  \frac{\log i-(\log n-(1-\theta^{-1})(\lfloor \log_\theta n\rfloor +j))}{\sqrt{(1-\theta^{-1})^2(\lfloor \log_\theta n\rfloor+j)}}\in B\Big\}\Big|,\\
	\wt X^{(n)}_j(B)&:=\Big|\Big\{\inn: \zni=\lfloor \log_\theta n\rfloor +j,  \frac{\log i-\mu\log n}{\sqrt{(1-\sigma^2)\log n}}\in B\Big\}\Big|,\\
	\wt X^{(n)}_{\geq j}(B)&:=\Big|\Big\{\inn: \zni\geq\lfloor \log_\theta n\rfloor +j,  \frac{\log i-\mu\log n}{\sqrt{(1-\sigma^2)\log n}}\in B\Big\}\Big|,\\
	X_j(B)&:=\Big|\Big\{i\in\N: \lfloor P_i+\eps\rfloor = j,  M_i\in B\Big\}\Big|,\\
	X_{\geq j}(B)&:=\Big|\Big\{i\in\N: \lfloor P_i+\eps\rfloor \geq j, M_i\in B\Big\}\Big|.
	\ea \ee 
	Using these random variables is justified, as $\wt X_j^{(n)}(B)=\CM\CP^{(n)}(\{j\}\times B)$, $\wt X_{\geq j}^{(n)}(B)=\CM\CP^{(n)}([j,\infty]\times B)$, and $X_j(B)=\CM\CP^\eps(\{j\}\times B)$ and $X_{\geq j}(B)=\CM\CP^\eps([j,\infty]\times B)$. Furthermore, when $j=o(\sqrt{\log n})$, $X_j^{(n)}(B)\approx \wt X^{(n)}_j(B), X_{\geq j}^{(n)}(B)\approx \wt X_{\geq j}^{(n)}(B)$. For any $K\in\N$, take any (fixed) increasing integer sequence $(j_k)_{k\in[K]}$ with $0\leq K':=\min\{k: j_{k+1}=j_K\}$ and any sequence $(B_k)_{k\in[K]}$ with $B_k=(a_k,b_k]\in\CB(\R)$ for some $a_k,b_k\in\R$ and such that $B_k\cap B_\ell=\emptyset$ when $j_k=j_\ell$ and $k\neq \ell$. The conditions on the sets $B_k$ ensure that the elements $\{j_1\}\times B_1, \ldots, \{j_K'\}\times B_{K'}, \{j_{K'+1},\ldots\}\times B_{K'+1}, \ldots, \{j_K,\ldots\}\times B_K$ of $\CA$ are disjoint. We are thus required to prove the joint distributional convergence of the random variables 
	\be\label{eq:xnseqwrt}
	(\wt X_{j_1}^{(n)}(B_1),\ldots,\wt  X^{(n)}_{j_{K'}}(B_{K'}),\wt X^{(n)}_{\geq j_{K'+1}}(B_{K'+1}),\ldots,\wt X_{\geq j_K}^{(n)}(B_K)),
	\ee
	to prove Theorem~\ref{thrm:deglocwrt}.  
	
	\subsection{Intermediate results}
	
	We first state some intermediate results which are required to prove Theorems~\ref{thrm:conddegloc} and~\ref{thrm:deglocwrt} and prove these theorems afterwards. We defer the proof of the intermediate results to Section~\ref{sec:degwrtproof}.
	
	The first result provides precise and general asymptotic bounds for the joint distribution of the degree and label of vertices selected uniformly at random from $[n]$. We recall $\theta=\theta_1:=1+\E W$. We then formulate the following result.
	
	\begin{proposition}[Degree and label of typical vertices]\label{prop:deglocwrt}
		Consider the WRT model, that is, the WRG as in Definition~\ref{def:wrg} with $m=1$, with vertex-weights $(W_i)_{\inn}$ which are i.i.d.\ copies of a positive random variable $W$ that satisfies Condition~\ref{item:c1} of Assumption~\ref{ass:weights}. Fix $k\in\N,c\in(0,\theta/(\theta-1)),\eta\in(0,1)$, and let $(v_i)_{i\in[k]}$ be $k$ vertices selected uniformly at random without replacement from $[n]$. For non-negative integers $(d_i)_{i\in[k]}$ such that $d_i\leq c\log n, i\in[k]$, let $(\ell_i)_{i\in[k]}\in \R_+^k$ be such that they satisfy $\ell_i\leq n\exp(-(1-\zeta)(1-\theta^{-1})(d_i+1))$ and $\ell_i\geq n^\eta$ for all $n$ large, for any $\zeta>0$ and each $i\in[k]$, and let $X_i\sim\mathrm{Gamma}(d_i+1,1),i\in[k]$. Then, uniformly over $d_i\leq c\log n,i\in[k]$,
		\be\ba \label{eq:degdistr}
		\mathbb P({}&\Zm_n(v_i)=d_i, v_i> \ell_i, i\in[k])\\
		&=(1+o(1))\prod_{i=1}^k\E{\frac{\theta-1}{\theta-1+W}\Big(\frac{W}{\theta-1+W}\Big)^{d_i}\Pf{X_i<\Big(1+\frac{W}{\theta-1}\Big)\log(n/\ell_i)}}.
		\ea\ee  
		Moreover, when $d_i=d_i(n)$ diverges with $n$ and with $\wt X_i\sim\text{Gamma}(d_i+\lfloor d_i^{1/4}\rfloor+1,1),i\in[k]$,
		\be \ba \label{eq:degdistrtail}
		\mathbb P({}&\Zm_n(v_i)\geq d_i, v_i> \ell_i, i\in[k])&\\
		&\leq(1+o(1))\prod_{i=1}^k \E{\Big(\frac{W}{\theta-1+W}\Big)^{d_i}\Pf{X_i<\Big(1+\frac{W}{\theta-1}\Big)\log(n/\ell_i)}},\\
		\mathbb P({}&\Zm_n(v_i)\geq d_i, v_i> \ell_i, i\in[k])\\
		&\geq(1+o(1))\prod_{i=1}^k \E{\Big(\frac{W}{\theta-1+W}\Big)^{d_i}\Pf{\wt X_i<\Big(1+\frac{W}{\theta-1}\Big)\log(n/\ell_i)}}.
		\ea \ee 
	\end{proposition}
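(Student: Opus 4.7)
My plan is to condition on the weight sequence, reduce the joint probability to a decoupled product of one-vertex Poisson approximations, and then turn the resulting sums over labels into the stated expectation via an explicit change of variables that generates the Gamma CDF.

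First, I would replace the uniform random sample by a sum,
$$\P{\Zm_n(v_i)=d_i,\, v_i>\ell_i,\, i\in[k]}=\frac{1}{(n)_k}\sum_{\substack{\ell_i<\ell'_i\leq n\\ \ell'_1,\ldots,\ell'_k\ \text{distinct}}}\P{\Zm_n(\ell'_i)=d_i,\,i\in[k]}.$$
Conditional on $(\F_j)_{j\in[n]}$, the parent $A_j$ of any vertex $j>\max_i\ell'_i$ is a single draw with $\Pf{A_j=\ell'_i}=W_{\ell'_i}/S_{j-1}$, and $\Zm_n(\ell'_i)=\sum_{j>\ell'_i}\mathbf 1\{A_j=\ell'_i\}$. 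Comparing the joint probability generating function $\Ef{}{\prod_i x_i^{\mathbf 1\{A_j=\ell'_i\}}}=1+\sum_i(x_i-1)W_{\ell'_i}/S_{j-1}$ with that of independent Bernoullis and using that $W_{\ell'_i}/S_{j-1}=O(1/j)$ uniformly in the weights, a Chen--Stein estimate shows that $(\Zm_n(\ell'_i))_{i\in[k]}$ is, conditionally on the weights, within total-variation distance $o(1)$ of independent Poisson random variables with rates $\lambda_i=W_{\ell'_i}\sum_{j=\ell'_i+1}^n 1/S_{j-1}$, provided $\ell'_i\to\infty$ (which follows from $\ell'_i\geq \ell_i\geq n^\eta$). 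Applying Lemma~\ref{lemma:logconv} replaces $\lambda_i$ by $(W_{\ell'_i}/(\theta-1))\log(n/\ell'_i)$ up to a uniform a.s.\ error $o(1)$, and since the $W_{\ell'_i}$ are i.i.d.\ copies of $W$, the off-diagonal contributions from the distinctness constraint are $O(k^2/n)$ and negligible, yielding
$$\P{\Zm_n(v_i)=d_i,\,v_i>\ell_i,\,i\in[k]}=(1+o(1))\prod_{i=1}^k\frac{1}{n}\sum_{\ell'_i=\lceil\ell_i\rceil+1}^n\E{\frac{\lambda_i^{d_i}e^{-\lambda_i}}{d_i!}}.$$
Approximating the inner sum by $\int_{\ell_i/n}^1 du$ with $u=\ell'_i/n$ and successively changing variables to $y=(W/(\theta-1))\log(1/u)$ and $z=(1+W/(\theta-1))y$ transforms the integrand into $\frac{\theta-1}{\theta-1+W}\big(\frac{W}{\theta-1+W}\big)^{d_i}\frac{z^{d_i}e^{-z}}{d_i!}$ on $[0,(1+W/(\theta-1))\log(n/\ell_i)]$, which after recognising $\int_0^\lambda z^d e^{-z}/d!\,dz=\Pf{X\leq\lambda}$ for $X\sim\Gamma(d+1,1)$ produces exactly \eqref{eq:degdistr}.

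For the tail bound \eqref{eq:degdistrtail}, the upper bound follows by summing \eqref{eq:degdistr} over $d'_i\geq d_i$: the tail sum $\sum_{d'\geq d_i}\frac{\theta-1}{\theta-1+W}\big(\frac{W}{\theta-1+W}\big)^{d'}=\big(\frac{W}{\theta-1+W}\big)^{d_i}$ generates the geometric factor, and the slowest-decaying Gamma tail is the one at index $d_i+1$, giving the stated bound. The lower bound restricts the summation to $d_i\leq d'_i\leq d_i+\lfloor d_i^{1/4}\rfloor$; the analogous calculation converts the restricted sum into a Gamma tail with index $d_i+\lfloor d_i^{1/4}\rfloor+1$, and the slack $d_i^{1/4}\ll\sqrt{d_i}$ is narrower than the standard deviation of the relevant Poisson so that the truncation only shifts the Gamma parameter without affecting leading-order behaviour. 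The main obstacle will be making the Poisson approximation uniform in the labels $\ell'_i$ with an error small enough that, after multiplication by the Gamma CDF $\Pf{X_i\leq(1+W/(\theta-1))\log(n/\ell_i)}$, it remains an $o(1)$ relative error in the final expression; the hypothesis $\ell_i\leq n\exp(-(1-\xi)(1-\theta^{-1})(d_i+1))$ is exactly what keeps the Gamma argument safely above $d_i+1$, bounding the Gamma CDF away from being exponentially small and allowing the Poisson approximation error to be absorbed into the $(1+o(1))$ prefactor.
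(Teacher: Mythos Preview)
Your overall strategy---decouple via a Poisson approximation, then convert the label sum into the Gamma CDF through the substitution $y=\log(n/\ell')$---is natural, and your change-of-variables computation is correct. However, the precision you claim from the Chen--Stein step is not sufficient, and your diagnosis of ``the main obstacle'' misses the actual difficulty.

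A total-variation bound between the conditional law of $(\Zm_n(\ell'_i))_{i\in[k]}$ and a product of Poissons only gives you an \emph{additive} error $\delta_n$ on the point probability $\Pf{\Zm_n(\ell'_i)=d_i,\,i\in[k]}$. After averaging over the $\ell'_i$, the target quantity is of order $\prod_i\theta^{-d_i}$, which for $d_i$ close to $c\log n$ with $c\in(1/\log\theta,\,\theta/(\theta-1))$ is $n^{-c\log\theta}$ or smaller per factor. Your Chen--Stein error $\delta_n$ is at best polynomially small like $n^{-\eta}$ (from $\max_j W_{\ell'_i}/S_{j-1}=O(n^{-\eta})$), and there is no relation in the hypotheses forcing $\eta>kc\log\theta$; for instance with $\theta=2$, $k=1$, $c=1.9$ one has $c\log\theta\approx 1.32>1>\eta$. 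So the additive error swamps the main term. The bound on $\ell_i$ that you invoke only keeps the \emph{Gamma CDF factor} bounded below; it does nothing about the factor $(W/(\theta-1+W))^{d_i}$, which is where the exponential smallness lives.

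The paper avoids this issue entirely by never passing through a Poisson approximation. It writes the exact conditional probability as a sum over the ordered ``arrival times'' $m_{1,t}<\cdots<m_{d_t,t}$ at which vertex $j_t$ gains its edges, introduces a concentration event $E_n=\{S_j\in j\E W(1\pm\zeta_n),\ n^\eta\le j\le n\}$ with $\zeta_n=n^{-\delta\eta}$, and manipulates the resulting product
\[
\prod_{t,s}\frac{W_{j_t}}{S_{m_{s,t}-1}}\prod_{u,s}\Big(1-\frac{\sum_{\ell\le u}W_{j_\ell}}{S_{s-1}}\Big)
\]
directly into $(j_t/n)^{a_t}\prod_s m_{s,t}^{-1}$ up to a \emph{multiplicative} $(1+o(1))$, because every approximation is of the form $(1\pm\zeta_n)^{d_t}=1+o(1)$ thanks to $d_t\zeta_n\to 0$. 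The sum over the $m_{s,t}$ then becomes $(\log(n/j_t))^{d_t}/d_t!$, and the sum over $j_t$ becomes the Gamma CDF exactly as in your computation. The key point is that multiplicative control is maintained throughout, so the $\theta^{-d_i}$ scale never has to compete with an additive remainder. If you want to salvage a Poisson-style argument, you would need a local (pointwise, multiplicative) limit theorem for sums of small Bernoullis valid uniformly over $d\le c\log n$, not a total-variation bound.
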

	\begin{remark}
		$(i)$ We conjecture that the additional condition that $d_i$ diverges with $n$ for all $i\in[k]$ is sufficient but not  necessary for the result in~\eqref{eq:degdistrtail} to hold, and that a sharper lower bound, using $X_i$ instead of $\wt X_i$, can be achieved. These minor differences arise only due to the nature of our proof. However, the results in Proposition~\ref{prop:deglocwrt} are sufficiently strong for the purpose of this paper.
		
		$(ii)$ Lemma~\ref{lemma:expasymp} and Corollary~\ref{cor:expasymp} in the~\hyperref[sec:appendix]{Appendix} provide asymptotic estimates for the probability in~\eqref{eq:degdistrtail} when the vertex-weight distribution satisfy Condition~\ref{item:c2} or satisfies the~\ref{ass:weightatom}, \ref{ass:beta}, or~\ref{ass:gamma} case from Assumption~\ref{ass:weights}, for a  particular parametrisation of $d_i, \ell_i,i\in[k]$.
		
		$(iii)$ Proposition~\ref{prop:deglocwrt} also holds when we consider the definition of the WRT model with \emph{random out-degree}, as discussed in Remark~\ref{remark:def}$(ii)$. For the interested reader, we refer to the discussion after the proof of Lemma $5.10$ in~\cite{EslLodOrt21} for the (minor) adaptations required, which also suffice for the proof of Proposition~\ref{prop:deglocwrt}.
	\end{remark}
	
	With Proposition~\ref{prop:deglocwrt} we can make the heuristic that the maximum degree is of the order $d_n$ when $p_{\geq d_n}\approx 1/n$ rigorous, where 
	\be \label{eq:pk}
	p_{\geq d}:=\E{\Big(\frac{W}{\theta-1+W}\Big)^{d}}, \qquad d\in\N_0,
	\ee
	is the limiting tail degree distribution of the WRT model. This follows from the following lemma.
	
	\begin{lemma}\label{lemma:maxdeg}
		Consider the WRT model, that is, the WRG as in Definition~\ref{def:wrg} with $m=1$, with vertex-weights $(W_i)_{\inn}$ which are i.i.d.\ copies of a positive random variable $W$ that satisfies Condition~\ref{item:c1} of Assumption~\ref{ass:weights}, and recall $\theta=\theta_1=1+\E W$. Fix $c\in(0,\theta/(\theta-1))$ and let $(d_n)_{n\in\N}$ be a positive integer sequence that diverges with $n$ such that $d_n\leq c\log n$. Then,
		\be 
		\lim_{n\to\infty}n\E{\Big(\frac{W}{\theta-1+W}\Big)^{d_n}}=0\quad \Rightarrow \quad 	\lim_{n\to\infty}\P{\max_{\inn}\zni\geq d_n}=0.
		\ee
		Similarly, 
		\be 
		\lim_{n\to\infty}n\E{\Big(\frac{W}{\theta-1+W}\Big)^{d_n}}=\infty\quad \Rightarrow \quad 	\lim_{n\to\infty}\P{\max_{\inn}\zni\geq d_n}=1.
		\ee 
	\end{lemma}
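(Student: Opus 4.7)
The plan is to apply the first- and second-moment methods to the counting variable $N_n := |\{i \in [n] : \mathcal{Z}_n(i) \geq d_n\}|$, since $\{\max_{i \in [n]} \mathcal{Z}_n(i) \geq d_n\} = \{N_n \geq 1\}$, and to estimate its first two moments via Proposition~\ref{prop:deglocwrt}. In both directions I would split the label range at $n^\eta$ for a small $\eta > 0$ to match the requirement $\ell_i \geq n^\eta$ of Proposition~\ref{prop:deglocwrt}.

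For the first implication, bound $\mathbb{P}(N_n \geq 1) \leq \mathbb{E}[N_n]$. Applying the upper bound in~\eqref{eq:degdistrtail} with $k = 1$ and $\ell_1 = n^\eta$ (permissible for small $\eta$ since $c < \theta/(\theta-1)$) and using the trivial estimate $\mathbb{P}_W(X_1 < \cdot) \leq 1$ yields
\[
\sum_{i > n^\eta} \mathbb{P}(\mathcal{Z}_n(i) \geq d_n) \;\leq\; (1+o(1))\, n\, p_{\geq d_n} \;\to\; 0.
\]
For the remaining $O(n^\eta)$ small labels, I would first combine the hypothesis with the elementary lower bound $p_{\geq d_n} \geq \mathbb{P}(W \geq 1 - \epsilon)\bigl((1 - \epsilon)/(\theta - \epsilon)\bigr)^{d_n}$ (valid for every $\epsilon > 0$ because $x_0 = 1$) to deduce $d_n \geq (1 - o(1)) \log_\theta n$. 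Then the Chernoff bound~\eqref{eq:chern}--\eqref{eq:ui} specialised to $m=1$, handled exactly as in the treatment of the sum $R_1$ in the proof of Theorem~\ref{thrm:bddloc}, gives an almost sure uniform bound $\mathbb{P}_W(\mathcal{Z}_n(i) \geq d_n) \leq n^{-c_0 + o(1)}$ over $i \leq n^\eta$ for some $c_0 > 0$; choosing $\eta < c_0$ and taking expectations finishes the first direction.

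For the second implication, I would apply the Paley--Zygmund inequality $\mathbb{P}(N_n \geq 1) \geq \mathbb{E}[N_n]^2 / \mathbb{E}[N_n^2]$. The lower bound in~\eqref{eq:degdistrtail} together with the concentration of $\wt X_1 \sim \Gamma(d_n + \lfloor d_n^{1/4} \rfloor + 1, 1)$ around $d_n$ (with fluctuations $O(\sqrt{d_n})$) shows that, on $\{W \geq 1 - \epsilon\}$ with $\epsilon$ small and $d_n \leq c \log n$, the Gamma tail in~\eqref{eq:degdistrtail} tends to $1$; a Laplace-type argument using that $w \mapsto w/(\theta-1+w)$ is maximised at $w = 1$ then yields $\mathbb{E}[(W/(\theta-1+W))^{d_n} \mathbbm{1}(W \geq 1-\epsilon)] \sim p_{\geq d_n}$ as $d_n \to \infty$, whence $\mathbb{E}[N_n] \geq (1-o(1))\, n\, p_{\geq d_n} \to \infty$. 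For the second moment I would write
\[
\mathbb{E}[N_n^2] = \mathbb{E}[N_n] + \sum_{i \neq j} \mathbb{P}(\mathcal{Z}_n(i) \geq d_n,\, \mathcal{Z}_n(j) \geq d_n),
\]
and apply the $k=2$ upper bound in~\eqref{eq:degdistrtail} with $\ell_1 = \ell_2 = n^\eta$ to obtain $\sum_{i \neq j,\, i,j > n^\eta} \leq (1+o(1))(n\, p_{\geq d_n})^2 = (1+o(1))\, \mathbb{E}[N_n]^2$; pairs with at least one label $\leq n^\eta$ are controlled by the same Chernoff bound as in the first direction.

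The principal obstacle is the treatment of the $O(n^\eta)$ small-label vertices, which fall outside the scope of Proposition~\ref{prop:deglocwrt} and must be handled by a standalone Chernoff argument, crucially leveraging that the hypothesis in each direction forces $d_n$ to lie near or above $\log_\theta n$. A secondary delicate step is the Laplace-type asymptotic $\mathbb{E}[(W/(\theta-1+W))^{d_n} \mathbbm{1}(W \geq 1-\epsilon)] \sim p_{\geq d_n}$, which is needed to ensure that restricting $W$ away from $0$ in the lower bound of~\eqref{eq:degdistrtail} does not destroy the leading-order behaviour of $\mathbb{E}[N_n]$.
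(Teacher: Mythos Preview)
Your overall strategy---first- and second-moment methods with the moment estimates supplied by Proposition~\ref{prop:deglocwrt}---is exactly the paper's, and your identification of the two ``delicate steps'' (small labels and the Laplace-type reduction) is accurate. The paper packages the Laplace step as Lemma~\ref{lemma:asymp}, which is precisely your claim that restricting to $\{W>1-\epsilon\}$ does not change the leading order of $p_{\geq d_n}$.

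There is, however, one genuine gap in your second direction. You apply Paley--Zygmund to the \emph{full} count $N_n$ and then propose to control the contribution to $\mathbb{E}[N_n^2]$ from pairs with a label $\leq n^\eta$ by ``the same Chernoff bound as in the first direction.'' But that Chernoff argument hinged on the deduction $d_n\geq (1-o(1))\log_\theta n$, which you obtained from the \emph{first} hypothesis $np_{\geq d_n}\to 0$. Under the second hypothesis $np_{\geq d_n}\to\infty$ you only get $d_n\leq (1+o(1))\log_\theta n$; nothing prevents, say, $d_n\sim \tfrac12\log_\theta n$, in which case small-label vertices have probability $\Theta(1)$ of exceeding $d_n$ and the Chernoff bound is useless. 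The trivial bound $\sum_{i\leq n^\eta,\,j}\leq 2n^{1+\eta}$ is also too crude when $(np_{\geq d_n})^2$ grows only like $n$. The paper sidesteps this entirely by restricting \emph{before} applying Chung--Erd\H{o}s: it bounds $\mathbb{P}(\max_i\mathcal Z_n(i)\geq d_n)\geq \mathbb{P}(\cup_{i>n^{\mu-\eps}}A_{i,n})$ and runs the second-moment argument on the truncated union, so small labels never enter the second moment at all. Replacing your $N_n$ by $N_n':=|\{i>n^\eta:\mathcal Z_n(i)\geq d_n\}|$ from the outset fixes the issue completely.

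For the first direction your direct Chernoff treatment of $i\leq n^\eta$ is correct, but note that the paper takes a shorter route: it invokes the already-proved Theorem~\ref{thrm:bddloc} to write $\mathbb{P}(\max_i\mathcal Z_n(i)\geq d_n)\leq \mathbb{P}(\max_{i>n^{\mu-\eps}}\mathcal Z_n(i)\geq d_n)+\mathbb{P}(I_n\leq n^{\mu-\eps})$, the second term being $o(1)$ by the location result. This avoids having to re-run the $R_1$-style Chernoff computation and to first extract $d_n\geq (1-o(1))\log_\theta n$ from the hypothesis. Your approach is more self-contained; the paper's is shorter because it recycles Theorem~\ref{thrm:bddloc}.
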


		\begin{remark}
			Lemma~\ref{lemma:maxdeg} can be used to provide precise asymptotic values for the maximum degree in the WRT model. Under assumptions on the distribution of the vertex-weights, it is possible to determine values of $d_n$ for which either $\lim_{n\to\infty}np_{\geq d_n}=0$ or $\lim_{n\to\infty}np_{\geq d_n}=\infty$ is met. In particular, Lemma~\ref{lemma:maxdeg} can be used to extend Theorems $2.6,2.7,$ and Equation $(4.6)$ in Theorem $4.6$ of~\cite{EslLodOrt21} to a wider range of vertex-weight distributions. Namely, in~\cite{EslLodOrt21}, condition~\ref{item:c3} is required for a result equivalent to Lemma~\ref{lemma:maxdeg} to hold. This result is used to prove the aforementioned theorems. Here, however, we do not need Condition~\ref{item:c3} for Lemma~\ref{lemma:maxdeg}, so that these Theorems can be extended to a wider range of vertex-weight distributions. 
		\end{remark} 
	
	We now present a proposition which asymptotically determines the joint factorial moments of the random variables $X_j^{(n)}(B)$ and $X_{\geq j}^{(n)}(B)$, as in~\eqref{eq:xnwrt}, when the vertex-weight distribution satisfies the~\ref{ass:weightatom} case. It is instrumental for the proof of Theorem~\ref{thrm:deglocwrt}.
	
	\begin{proposition}\label{prop:momentconvwrt}
		Consider the WRT model, that is, the WRG model as in Definition~\ref{def:wrg} with $m=1$, with vertex-weights $(W_i)_{\inn}$ that satisfy the~\ref{ass:weightatom} case in Assumption~\ref{ass:weights} for some $q_0\in(0,1]$. Recall that $\theta:=1+\E W$ and that $(x)_k:=x(x-1)\cdots (x-(k-1))$ for $x\in\R,k\in\N$, and $(x)_0:=1$. Fix $c\in(0,\theta/(\theta-1))$ and $K\in\N$, let $(j_k)_{k\in[K]}$ be  a non-decreasing integer sequence with $0\leq K':=\min\{k: j_{k+1}=j_K\}$ such that $j_1+\log_\theta n=\omega(1)$ and $j_K+\log_\theta n<c\log n$, let $(B_k)_{k\in[K]}$ be a sequence of sets $B_k\in \CB(\R)$ such that $B_k\cap B_\ell=\emptyset $ when $j_k=j_\ell$ and $k\neq \ell$, and let $(c_k)_{k\in[K]}\in \N_0^K$. Recall the random variables $X^{(n)}_j(B), X_{\geq j}^{(n)}(B)$ and $\wt X^{(n)}_j(B), \wt X_{\geq j}^{(n)}(B)$ from~\eqref{eq:xnwrt} and define $\eps_n:=\log_\theta n-\lfloor \log_\theta n\rfloor$. Then, 
		\be \ba
		\E{\prod_{k=1}^{K'}\Big(X_{j_k}^{(n)}(B_k)\Big)_{c_k}\prod_{k=K'+1}^K \Big(X_{\geq j_k}^{(n)}(B_k)\Big)_{c_k}}={}&(1+o(1))\prod_{k=1}^{K'}\Big(q_0(1-\theta^{-1})\theta^{-j_k+\eps_n}\Phi(B_k)\Big)^{c_k}\\
		&\times \prod_{k=K'+1}^{K}\Big(q_0\theta^{-j_K+\eps_n}\Phi(B_k)\Big)^{c_k}.
		\ea\ee 
		Moreover, when $j_1,\ldots, j_K=o(\sqrt{\log n})$, 
		\be \ba
		\E{\prod_{k=1}^{K'}\Big(\wt X_{j_k}^{(n)}(B_k)\Big)_{c_k}\prod_{k=K'+1}^K \Big(\wt X_{\geq j_k}^{(n)}(B_k)\Big)_{c_k}}={}&(1+o(1))\prod_{k=1}^{K'}\Big(q_0(1-\theta^{-1})\theta^{-j_k+\eps_n}\Phi(B_k)\Big)^{c_k}\\
		&\times \prod_{k=K'+1}^{K}\Big(q_0\theta^{-j_K+\eps_n}\Phi(B_k)\Big)^{c_k}.
		\ea\ee 
	\end{proposition}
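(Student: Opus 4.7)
The plan is to expand each factorial moment as a sum over ordered tuples of distinct vertices satisfying the prescribed degree-and-label conditions, and then to estimate the resulting probabilities using Proposition~\ref{prop:deglocwrt}. Since different values of $j_k$ give mutually exclusive exact-degree events, and since $B_k\cap B_\ell=\emptyset$ whenever $j_k=j_\ell$ with $k\neq\ell$, the product of falling factorials collapses into a single sum over $c$-tuples of pairwise distinct vertices of $[n]$, with $c:=\sum_k c_k$. By symmetry of the model in the vertex labels, every such tuple contributes the same probability---namely the probability that $c$ uniformly chosen distinct vertices satisfy the matching pairing of conditions---so the factorial moment equals $(n)_c$ times this probability.

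To evaluate that probability I would apply Proposition~\ref{prop:deglocwrt} slot-by-slot. For an exact-degree slot I use~\eqref{eq:degdistr} with $d_\alpha=\lfloor \log_\theta n\rfloor+j_{k(\alpha)}$, rewriting a range $B_{k(\alpha)}=(a,b]$ as the difference of two lower-label events with cutoffs $\ell_a,\ell_b$ where $\log\ell_x:=\mu\log n+x\sqrt{(1-\sigma^2)\log n}$; for a tail slot I use~\eqref{eq:degdistrtail} and then sum the geometric series $\sum_{j\geq j_K}\theta^{-j}=\theta^{-j_K}/(1-\theta^{-1})$. This reduces the computation to products of expectations of the form $\E{g(W)^{d_\alpha}h_\alpha(W)}$, where $g(w):=w/(\theta-1+w)$ attains its strict maximum $g(1)=\theta^{-1}$. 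The atom hypothesis $\P{W=1}=q_0$ then isolates the dominant contribution: splitting the non-atom part at $W=1-\varepsilon_n$ with $\varepsilon_n\downarrow 0$ chosen so that $\varepsilon_n d_\alpha\to\infty$, on $\{W\leq 1-\varepsilon_n\}$ one has the exponential gain $(g(1-\varepsilon_n)/g(1))^{d_\alpha}=\mathrm{e}^{-\Theta(\varepsilon_n d_\alpha)}$, while on $\{1-\varepsilon_n<W<1\}$ the non-atom measure vanishes. The non-atom contribution is therefore $o(g(1)^{d_\alpha})$, and evaluating the atom term at $W=1$ produces the prefactors $q_0$, $1-\theta^{-1}$ (in the exact case, from $(\theta-1)/(\theta-1+W)$ at $W=1$), and $\theta^{-d_\alpha}=n^{-1}\theta^{\varepsilon_n-j}$.

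The Gaussian factor $\Phi(B_k)$ arises from the central limit theorem for the Gamma$(d_\alpha+1,1)$ variable evaluated at $W=1$: the algebraic identity $(\theta/(\theta-1))\sqrt{(1-\sigma^2)\log\theta}=1$, which is immediate from the definitions of $\mu$ and $\sigma^2$, turns the standardized upper bound appearing in~\eqref{eq:degdistr} into $-b+o(1)$, so that a range $(a,b]$ contributes $\Phi(-a)-\Phi(-b)=\Phi(B_k)$ by symmetry of the standard normal. The tilded statement then follows because when $j_k=o(\sqrt{\log n})$ the shift $j_k(1-\theta^{-1})$ and scale correction $(1-\theta^{-1})^2j_k$ are both $o(\sqrt{(1-\sigma^2)\log n})$, so the normalizations in the definitions of $X^{(n)}_j(B)$ and $\wt X^{(n)}_j(B)$ agree asymptotically.

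The main difficulty I expect is uniformity of these estimates over the whole permitted range of $j_k$, together with controlling the mismatch between the Gamma parameters $d_i+1$ and $d_i+\lfloor d_i^{1/4}\rfloor+1$ appearing in the upper and lower bounds of~\eqref{eq:degdistrtail}; one must verify that this mismatch multiplies both $g(1)^{d_\alpha}$ and the Gaussian probability by only $1+o(1)$, so that the multiplicative errors do not accumulate when taken across the $\alpha$ factors and the upper and lower bounds sandwich cleanly to the advertised product.
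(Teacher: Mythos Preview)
Your approach is essentially the same as the paper's: express the factorial moment as $(n)_M$ times the probability that $M=\sum_k c_k$ uniformly chosen distinct vertices satisfy the prescribed degree--label constraints, then evaluate that probability via Proposition~\ref{prop:deglocwrt} (through Lemma~\ref{lemma:expasymp}), extract the atom contribution $q_0\theta^{-d}$, and read off the Gaussian factor from the CLT for the Gamma variable. The second display follows by the same rescaling comparison you give.

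There is one step you gloss over. Proposition~\ref{prop:deglocwrt} is stated only for the two pure events ``all slots exact'' (\eqref{eq:degdistr}) and ``all slots tail'' (\eqref{eq:degdistrtail}); it does not directly give you the mixed event $\{\Zm_n(v_i)=d_i,\ i\le M';\ \Zm_n(v_j)\ge d_j,\ M'<j\le M\}$ that you need. Saying ``apply slot-by-slot'' hides this. The paper handles it by inclusion--exclusion: writing $\ind\{\Zm_n(v_i)=d_i\}=\ind\{\Zm_n(v_i)\ge d_i\}-\ind\{\Zm_n(v_i)\ge d_i+1\}$ for each $i\le M'$ and expanding yields
\[
\P{\CD_{\bar d}(M',M)\cap\CL_{\bar A,\bar d}}=\sum_{S\subseteq[M']}(-1)^{|S|}\,\P{\CE_{\bar d}(S)\cap\CL_{\bar A,\bar d}},
\]
where each $\CE_{\bar d}(S)$ is a pure tail event to which Lemma~\ref{lemma:expasymp} applies. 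Summing the resulting $\theta^{-|S|}$ over $S\subseteq[M']$ produces the factor $(1-\theta^{-1})^{M'}$. Your ``sum the geometric series'' comment recovers the same factor but only for a single slot; for the joint event you still need the inclusion--exclusion (or an explicit mixed version of Proposition~\ref{prop:deglocwrt}, which is not stated). This is a short fix, but it should be made explicit.

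A minor slip: for the first display (the $X^{(n)}_j$ variables), the correct label cutoff is $\log\ell_x=\log n-(1-\theta^{-1})d+x\sqrt{(1-\theta^{-1})^2 d}$ with $d=\lfloor\log_\theta n\rfloor+j$, not the $\mu,\sigma^2$ version you wrote (that one is for $\wt X^{(n)}_j$). This matters because the first display allows $j$ that are not $o(\sqrt{\log n})$, where the two scalings differ. Lemma~\ref{lemma:expasymp} already uses the $d$-dependent scaling, so this is only a notational correction.
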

	
	We can interpret the results in Proposition~\ref{prop:momentconvwrt} as follows. Fix some $(j_k)_{k\in[K]}$ and $(B_k)_{k\in[K]}$ as in the proposition (we note that the $j_k$ are allowed to be a function of $n$, but for simplicity we do not discuss this case here). Then the result of the proposition tells us that the joint factorial moments of the random variables $X^{(n)}_{j_k}(B_k)$ and $X^{(n)}_{\geq j_k}(B_k)$ are asymptotically equal to a product of terms $(q_0(1-\theta^{-1})\theta^{-j_k+\eps_n}\theta(B_k))^{c_k}$ and $(q_0\theta^{-j_K+\eps_n}\theta(B_k))^{c_k}$, respectively. Since $\eps_n$ is bounded, it converges along subsequences to some value $\eps\in[0,1]$. Hence, the method of moments yields that the random variables of interest are asymptotically independent and that their limits, along certain subsequences, are Poisson random variables. Thus, the number of vertices with a degree equal to, or at least, $j_k+\log_\theta n$ and a label $i$ such that 
		\be 
		\frac{\log i-(\log n-(1-\theta^{-1})(\lfloor \log_\theta n\rfloor +j_k))}{\sqrt{(1-\theta^{-1})^2(\lfloor \log_\theta n\rfloor +j_k)}}\in B_k,  
		\ee   
		is asymptotically Poisson distributed. A similar statement can be made for the random variables $\wt X^{(n)}_{j_k}(B_k)$ and $\wt X^{(n)}_{\geq j_k}(B_k)$.
		
		A similar result can be proved for the~\ref{ass:beta} and~\ref{ass:gamma} cases, which we defer to Section~\ref{sec:examples}.
	
	\subsection{Proofs of main results}
	
	With the intermediate results at hand, we can prove Theorems~\ref{thrm:conddegloc} and~\ref{thrm:deglocwrt}.
	
	\begin{proof}[Proof of Theorem~\ref{thrm:conddegloc} subject to Proposition~\ref{prop:deglocwrt}]
		We recall that $d_i$ diverges as $n\to\infty$ for all $i\in[k]$ such that $c_i:=\limsup_{n\to\infty} d_i/\log n$ is strictly smaller than $\theta/(\theta-1)$ for all $i\in[k]$ and define, for $(x_i)_{i\in[k]}\in \R^k$ fixed,
		\be \label{eq:ell}
		\ell_i:=n\exp(-(1-\theta^{-1})d_i+x_i\sqrt{(1-\theta^{-1})^2d_i}), \qquad i\in[k].
		\ee 
		We first observe that by this definition, 
		\be 
		\Big\{\frac{\log v_i-(\log n-(1-\theta^{-1})d_i)}{\sqrt{(1-\theta^{-1})^2d_i}}\geq x_i, i\in[k]\Big\}=\{v_i>\ell_i, i\in[k]\}.
		\ee 
		Furthermore, we note that there exists an $\eta>0$ such that for all $i\in[k]$, we have $\ell_i\geq n^\eta$ and $\ell_i\leq n\exp(-(1-\zeta)(1-\theta^{-1})(d_i+1))$ for all $\zeta>0$ and all $n$ sufficiently large. Hence, the conditions in Proposition~\ref{prop:deglocwrt} are satisfied. We then write
			\be 
			\P{v_i\geq \ell_i, i\in[k]\,|\, \Zm_n(v_i)\geq d_i, i\in[k]}=\frac{\P{\Zm_n(v_i)\geq d_i, v_i\geq \ell_i, i\in[k]}}{\P{\Zm_n(v_i)\geq d_i,i\in[k]}}.
			\ee 
			We now combine Proposition~\ref{prop:deglocwrt} with Lemma~\ref{lemma:expasymp} in the~\hyperref[sec:appendix]{Appendix}. As we assume that the vertex-weight distribution satisfies Conditions~\ref{item:c1} and~\ref{item:c2} of Assumption~\ref{ass:weights}, it follows that
			\be \label{eq:noncondprob}
			\P{\Zm_n(v_i)\geq d_i, v_i>\ell_i, i\in[k]}=(1+o(1))\prod_{i=1}^k p_{\geq d_i}(1-\Phi(x_i)),
			\ee 		
			where we recall $p_{\geq d}$ from~\eqref{eq:pk}. It thus remains to show that 
			\be \label{eq:degasymp}
			\P{\Zm_n(v_i)\geq d_i, i\in[k]}=(1+o(1))\prod_{i=1}^k p_{\geq d_i}.
			\ee 
			We first assume that $c_i<1/\log \theta$ for all $i\in[k]$. We can then take any $\eps\in(0,\mu)$, and for all $n$ sufficiently large $n^{\mu-\eps}\leq n\exp(-(1-\theta^{-1})(d_i+1))$	holds for all $i\in[k]$. It then follows from Proposition~\ref{prop:deglocwrt} (with $\ell_i=n^{\mu-\eps}$ for all $i\in[k]$) and Lemma~\ref{lemma:asymp} that 
			\be \label{eq:crudeasymp}
			\P{\Zm_n(v_i)\geq d_i, i\in[k]}\geq\P{\Zm_n(v_i)\geq d_i,v_i\geq n^{\mu-\eps},  i\in[k]}=(1+o(1))\prod_{i=1}^k p_{\geq d_i}.
			\ee 
			It remains to prove a matching upper bound, for which we use that for any $\eta>0$ small,
			\be \ba\label{eq:split}
			\P{\Zm_n(v_i)\geq d_i,i\in[k]}\leq{}& \P{\Zm_n(v_i)\geq d_i,v_i\geq n^\eta,i\in[k]}\\
			&+\P{\Big(\cap_{i=1}^k \{\Zm_n(v_i)\geq d_i\}\Big)\cap \Big(\cup_{i=1}^k \{v_i<n^\eta\}\Big)}.
			\ea \ee 
			The first term on the right-hand side can be dealt with in the same manner as~\eqref{eq:crudeasymp} by setting $\eta=\mu-\eps$ with $\eps$ sufficiently close to $\mu$. We write the second term as
			\be \ba\label{eq:Sbound}
			\sum_{j=1}^k{}& \sum_{\substack{S\subseteq[k]\\|S|=j}}\P{\Zm_n(v_i)\geq d_i, i\in[k], v_j<n^\eta, j\in S, v_m>n^\eta, m\in S^c}\\
			&\leq \sum_{j=1}^k \sum_{\substack{S\subseteq[k]\\|S|=j}}\P{\Zm_n(v_i)\geq d_i, v_i>n^\eta, i\in S^c}\P{v_j<n^\eta, j\in S}\\
			&\leq \sum_{j=1}^k \sum_{\substack{S\subseteq[k]\\|S|=j}} (1+o(1)) n^{-j(1-\eta)}\prod_{i\in S^c}p_{\geq d_i},
			\ea \ee 
			where we use that the uniform vertices $(v_i)_{i\in S}$ are independent of everything else, and where we take care of the other probability in the second line in the same manner as the first term on the right-hand side of~\eqref{eq:split}. We now use Theorem~\ref{thrm:pkasymp} to bound $p_{\geq d}\geq (\theta+\xi)^{-d}=\exp(-d\log(\theta+\xi))$ for any $\xi>0$ and $d$ sufficiently large. Since $c_i<1/\log\theta$ for all $i\in[k]$, it thus follows that for $\xi$ and $\eta$ sufficiently small, $n^{-(1-\eta)}=o(p_{\geq d_i})$ for all $i\in[k]$. Hence, the final line of~\eqref{eq:Sbound} is $o(\prod_{i=1}^k p_{\geq d_i})$. In~\eqref{eq:split}, we thus find that
			\be 
			\P{\Zm_n(v_i)\geq d_i,i\in[k]}\leq(1+o(1))\prod_{i=1}^k p_{\geq d_i}.
			\ee 
			Combined with~\eqref{eq:crudeasymp}, this proves~\eqref{eq:degasymp} and thus the desired result.
		
		To extend the proof to $c_i\in[1/\log \theta,\theta/(\theta-1))$, the lower bound in~\eqref{eq:crudeasymp} is still valid when we choose $\eps$ sufficiently close to $\mu$ so that $n^{\mu-\eps}\leq n\exp(-(1-\theta^{-1})(d_i+1))$ still holds for all $i\in[k]$. To be more precise, when we let $\eps\in(c(1-\theta^{-1})-(1-\mu),\mu)$, where $c\in(\max_{i\in[k]}c_i,\theta/(\theta-1))$. The upper bound, however, no longer suffices, since the error terms on the right-hand side of~\eqref{eq:Sbound} no longer decay sufficiently fast. Instead, we require Condition~\ref{item:c3} of Assumption~\ref{ass:weights}. With this condition and since $c_i<\theta/(\theta-1)$ for all $i\in[k]$,  we can apply Proposition~\ref{lemma:degprobasymp}. This yields 
		\be 
		\P{\Zm_n(v_i)\geq d_i, i\in[k]}=(1+o(1))\prod_{i=1}^k p_{\geq d_i}.
		\ee  
		Together with~\eqref{eq:noncondprob} this implies the same result.
		
		Using Remark~\ref{rem:expasymp}$(i)$ and $(ii)$ (together with Proposition~\ref{lemma:degprobasymp} for the case $c_i\in[1/\log \theta,\theta/(\theta-1))$ for all $i\in[k]$), a similar result can be proved when conditioning on the event $\{\Zm_n(v_i)= d_i, i\in[k]\}$, as claimed in Remark~\ref{rem:degthrm}.
	\end{proof}	
	
	\begin{proof}[Proof of Theorem~\ref{thrm:deglocwrt} subject to Proposition~\ref{prop:momentconvwrt}]
		As discussed prior to~\eqref{eq:xnwrt}, it suffices to prove the weak convergence of $\CM\CP^{(n_j)}$ to $\CM\CP^\eps$ along subsequences $(n_j)_{j\in\N}$ such that $\eps_{n_j}\to \eps\in[0,1]$ as $j\to\infty$. In turn, this is implied by the convergence of the FDDs, i.e., by the joint convergence of the counting measures in~\eqref{eq:xnseqwrt}. 
		
		We recall that the points $P_i$ in the definition of the variables $X_j(B),X_{\geq j}(B)$ in~\eqref{eq:xnwrt} are the points of the Poisson point process $\CP$ with intensity measure $\lambda(x):= q_0\theta^{-x}\log \theta\,\d x$ in decreasing order. As a result, as the random variables $(M_i)_{i\in\N}$ are i.i.d.\ and also independent of $\CP$, $X_j(B)\sim \text{Poi}(\lambda_j(B)), X_{\geq j}(B)\sim \text{Poi}((1-\theta^{-1})^{-1}\lambda_j(B))$, where 
		\be 
		\lambda_j(B)=q_0(1-\theta^{-1})\theta^{-j+\eps}\Phi(B)=q_0(1-\theta^{-1})\theta^{-j+\eps}\P{M_1\in B}.
		\ee 
		We also recall that $(n_\ell)_{\ell\in\N}$ is a subsequence such that $\eps_{n_\ell}\to \eps$ as $\ell\to\infty$. We now take $c\in(1/\log \theta,\theta/(\theta-1))$ and for any $K\in\N$ consider any \emph{fixed} non-decreasing integer sequence $(j_k)_{k\in[K]}$. It follows from the choice of $c$ and the fact that the $j_k$ are fixed with respect to $n$ that $j_1+\log_\theta n=\omega(1)$ and that $j_K+\log_\theta n <c\log n$ for all large $n$. Moreover, let $K':=\min\{k: j_{k+1}=j_K\}$ and let $(B_k)_{k\in[K]}$ be a sequence of sets in $\CB(\R)$ such that $B_k\cap B_\ell=\emptyset$ when $j_k=j_\ell$ and $k\neq \ell$.
		
		We obtain from Proposition~\ref{prop:momentconvwrt} that, for any $(c_k)_{k\in[K]}\in \N_0^K$, and since $j_1,\ldots, j_K$ are fixed, 
		\be  \ba
		\lim_{n\to\infty}\mathbb E\bigg[\prod_{k=1}^{K'}\Big(\wt X_{j_k}^{(n_\ell)}(B_k)\Big)_{c_k}\prod_{k=K'+1}^K\!\!\Big(\wt X_{\geq j_k}^{(n_\ell)}(B_k)\Big)_{c_k} \bigg]
		&=\prod_{k=1}^{K'}\lambda_{j_k}^{c_k}\prod_{k=K'+1}^K((1-\theta^{-1})^{-1}\lambda_{j_k})^{c_k}\\
		&=\mathbb E\bigg[\prod_{k=1}^{K'}\Big(X_{j_k}(B_k)\Big)_{c_k}\prod_{k=K'+1}^K\!\!\Big(X_{\geq j_k}(B_k)\Big)_{c_k} \bigg],
		\ea \ee
		where the last step follows from the independence property of (marked) Poisson point processes and the choice of the sequences $(j_k, B_k)_{k\in[K]}$. The method of moments~\cite[Section $6.1$]{JanLucRuc00} then concludes the proof.
	\end{proof}
	
	\section{Proof of intermediate results}\label{sec:degwrtproof}
	
	In this section we prove the intermediate results introduced in Section~\ref{sec:highproof} that were used to prove some of the main results presented in Section~\ref{sec:def}. We start by proving Lemmas~\ref{lemma:maxdeg} and~\ref{prop:momentconvwrt} (subject to Proposition~\ref{prop:deglocwrt}) and finally prove Proposition~\ref{prop:deglocwrt}, which requires the most work and hence is deferred until the end of the section.
	
	\begin{proof}[Proof of Lemma~\ref{lemma:maxdeg} subject to Proposition~\ref{prop:deglocwrt}]
		Fix $\eps\in(0\vee (c(1-\theta^{-1})-(1-\mu)),\mu)$. We note that such an $\eps$ exists, since $c<\theta/(\theta-1)$. We start with the first implication. By Theorem~\ref{thrm:bddloc} and a union bound we have
		\be \ba \label{eq:maxdegub}
		\P{\max_{\inn}\zni \geq d_n}&\leq \P{\max_{\inn}\zni\geq d_n, I_n> n^{\mu-\eps}}+\P{I_n\leq n^{\mu-\eps}}\\
		&\leq \P{\max_{n^{\mu-\eps}<i \leq n}\zni\geq d_n}+o(1)\\
		&\leq \sum_{i=\lceil n^{\mu-\eps}\rceil}^n \P{\zni \geq d_n}+o(1)\\
		&=n\P{\Zm_n(v_1)\geq d_n, v_1> n^{\gamma-\eps}}+o(1), 
		\ea \ee  
		where $v_1$ is a vertex selected uniformly at random from $[n]$. We now apply Proposition~\ref{prop:deglocwrt} with $k=1, d_1=d_n, \ell_1=n^{\mu-\eps}$ (we observe that, since $\eps<\mu$ and by the bound on $d_n$, the conditions in Proposition~\ref{prop:deglocwrt} for $\ell_1$ and $d_1$ are satisfied) to obtain the upper bound
		\be 
		\P{\max_{\inn}\zni\geq d_n}\leq n\E{\Big(\frac{W}{\theta-1+W}\Big)^{d_n}\Pf{X\leq \Big(1+\frac{W}{\theta-1}\Big)\log (n^{1-\mu+\eps})}}(1+o(1))+o(1),
		\ee 
		where $X\sim\text{Gamma}(d+1,1)$. We can simply bound the conditional probability from above by one, so that the assumption yields the desired implication. 
		
		For the second implication, we use the Chung-Erd{\H o}s inequality. If we let $v_1,v_2$ be two vertices selected uniformly at random without replacement from $[n]$ and set $A_{i,n}:=\{\zni\geq d_n\}$, then
		\be \label{eq:chungerd}
		\P{\max_{\inn}\zni\geq d_n}=\P{\cup_{i=1}^n A_{i,n}}\geq \P{\cup_{i=\lceil n^{\mu-\eps}\rceil}^n A_{i,n}}\geq \frac{\big(\sum_{i=\lceil n^{\mu-\eps}\rceil}^n \P{A_{i,n}}\big)^2}{\sum_{i,j= \lceil n^{\mu-\eps}\rceil}^n \P{A_{i,n}\cap A_{j,n}}}.
		\ee 
		As in~\eqref{eq:maxdegub}, we can write the numerator as $(n\P{\Zm_n(v_1)\geq d_n, v_1\geq n^{\mu-\eps}})^2$. The denominator can be written as 
		\be \ba
		\sum_{\substack{i,j= \lceil n^{\mu-\eps}\rceil\\i\neq j}}^n \P{A_{i,n}\cap A_{j,n}}+\sum_{i=\lceil n^{\mu-\eps}\rceil}^n \P{A_{i,n}}={}&n(n-1)\P{\Zm_n(v_i)\geq d_n, v_i\geq n^{\mu-\eps}, i\in\{1,2\}}\\
		&+n\P{\Zm_n(v_1)\geq d_n, v_1\geq n^{\mu-\eps}}.
		\ea \ee 
		By applying Proposition~\ref{prop:deglocwrt} to the right-hand side, we find that it equals
		\be 
		(n\P{\Zm_n(v_1)\geq d_n, v_1\geq n^{\mu-\eps}})^2(1+o(1))+n\P{\Zm_n(v_1)\geq d_n, v_1\geq n^{\mu-\eps}}.
		\ee
		It follows that the right-hand side of~\eqref{eq:chungerd} equals
		\be \label{eq:degloclb}
		\frac{n\P{\Zm_n(v_1)\geq d_n, v_1\geq n^{\mu-\eps}}}{n\P{\Zm_n(v_1)\geq d_n, v_1\geq n^{\mu-\eps}}(1+o(1))+1}.
		\ee 
		It thus suffices to prove that the implication
		\be \label{eq:logic}
		\lim_{n\to\infty}n\E{\Big(\frac{W}{\theta-1+W}\Big)^{d_n}}=\infty\quad \Rightarrow \quad 	\lim_{n\to\infty}n\P{\Zm_n(v_1)\geq d_n, v_1\geq n^{\mu-\eps}}=\infty
		\ee 
		holds to conclude the proof. Again using Proposition~\ref{prop:deglocwrt}, we have that 
		\be 
		\P{\Zm_n(v_1)\geq d_n, v_1\geq n^{\mu-\eps}}\geq \E{\Big(\frac{W}{\theta-1+W}\Big)^{d_n}\Pf{\wt X\leq \Big(1+\frac{W}{\theta-1}\Big)\log (n^{1-\mu+\eps})}}(1+o(1)),
		\ee 
		where $\wt X\sim\text{Gamma}(d+\lfloor d^{1/4}\rfloor+1,1)$. Hence, it follows from Lemma~\ref{lemma:asymp} in the~\hyperref[sec:appendix]{Appendix} and the choice of $\eps$ that 
		\be 
		n\P{\Zm_n(v_1)\geq d_n, v_1\geq n^{\mu-\eps}}\geq n\E{\Big(\frac{W}{\theta-1+W}\Big)^{d_n}}(1-o(1)),
		\ee 
		which implies~\eqref{eq:logic} as desired and concludes the proof.
	\end{proof}
	
	\begin{proof}[Proof of Proposition~\ref{prop:momentconvwrt} subject to Proposition~\ref{prop:deglocwrt}]
		Recall that $c\in(0,\theta/(\theta-1))$, that $\mu=1-(\theta-1)/(\theta\log \theta)$, $\sigma^2=1-(\theta-1)^2/(\theta^2\log \theta)$, and that we have a non-decreasing integer sequence $(j_k)_{k\in[K]}$ with $K'=\min\{k: j_{k+1}=j_K\}$ such that $j_1+\log_\theta n =\omega(1), j_K+\log_\theta n<c\log n$ and a sequence $(B_k)_{k\in[K]}$ such that $B_k\in \CB(\R)$ and $B_k\cap B_\ell=\emptyset$ when $j_k=j_\ell$ and $k\neq \ell$. Then, let $(c_k)_{k\in[K]}\in \N_0^K$ and set  $M:=\sum_{k=1}^K c_k$ and $M':=\sum_{k=1}^{K'}c_k$.
		
		We define $\bar d=(d_i)_{i\in[M]}\in \Z^M$ and $\bar A=(A_i)_{i\in[M]}\subset \CB(\R)^M$ as follows. For each $i\in[M]$, find the unique $k\in[K]$ such that $\sum_{\ell=1}^{k-1}c_\ell<i\leq \sum_{\ell=1}^k c_\ell$ and set $d_i:=\lfloor \log_\theta n\rfloor +j_k, A_i:=B_k$. We note that this construction implies that the first $c_1$ many $d_i$ and $A_i$ equal $\lfloor \log_\theta n\rfloor +j_1$ and $B_1$, respectively, that the next $c_2$ many $d_i$ and $A_i$ equal $\lfloor \log_\theta n\rfloor +j_2$ and $B_2$, respectively, etcetera. Moreover, we let $(v_i)_{i\in[M]}$ be $M$ vertices selected uniformly at random without replacement from $[n]$. We then define the events
		\be \ba 
		\CL_{\bar A,\bar d}&:=\Big\{ \frac{\log v_i-(\log n-(1-\theta^{-1})d_i)}{\sqrt{(1-\theta^{-1})^2 d_i}}\in A_i, i\in[M]\Big\},\\
		\CD_{\bar d}(M',M)&:=\{\Zm_n(v_i)=d_i, i\in[M'], \Zm_n(v_j)\geq d_j, M'<j\leq M\},\\
		\CE_{\bar d}(S)&:=\{\Zm_n(v_i)\geq d_i+\ind_{\{i\in S\}}, i\in [M]\}.
		\ea \ee 
		We know from~\cite[Lemma $5.1$]{AddEsl18} that by the inclusion-exclusion principle, 
		\be\label{eq:inex}
		\P{\CD_{\bar d}(M',M)}=\sum_{j=0}^{M'}\sum_{\substack{ S\subseteq [M']:\\ |S|=j}}(-1)^j\P{\CE_{\bar d}(S)},		
		\ee
		so that intersecting the event $\CL_{\bar A.\bar d}$ in the probabilities on both sides yields
		\be \label{eq:probdhl}
		\P{\CD_{\bar d}(M',M)\cap \CL_{\bar A,\bar d}}=\sum_{j=0}^{M'}\sum_{\substack{ S\subseteq [M']:\\ |S|=j}}(-1)^j\P{\CE_{\bar d}(S)\cap \CL_{\bar A,\bar d}}.
		\ee 
		We define $\ell_d:\R\to (0,\infty)$ by $\ell_d(x):= \exp\big(\log n-(1-\theta^{-1})d+x\sqrt{(1-\theta^{-1})^2d}\big), x\in \R$, abuse this notation to also write $\ell_d( A):=\{\ell_d(x): x\in A\}$ for $A\subseteq \R$, and note that $\CL_{\bar A,\bar d}=\{v_i\in \ell_{d_i}(A_i), i\in[M]\}$. We also observe that, since $d_i$ diverges with $n$ for all $i\in[M]$, that $\ell_{d_i+\ind_{\{i\in S\}}}(x)=\ell_{d_i}(x(1+o(1)))$ for any $i\in[M]$ and $x\in\R$. This can be extended to the sets $(A_i)_{i\in[M]}$ rather than $x\in\R$ as well. As a result, we can use Corollary~\ref{cor:expasymp} in the~\hyperref[sec:appendix]{Appendix} (with the observations made in Remark~\ref{rem:expasymp}) to then obtain
		\be 
		\P{\CE_{\bar d}(S)\cap \CL_{\bar A,\bar d}(S)}=(1+o(1))\prod_{i=1}^M q_0\theta^{-(d_i+\ind_{\{i\in S\}})}\Phi(A_i)=(1+o(1))q_0^M\theta^{-|S|-\sum_{i=1}^M d_i}\prod_{i=1}^M \Phi(A_i).
		\ee 
		Using this in~\eqref{eq:probdhl} we arrive at 
		\be\ba \label{eq:condlimit}
		\P{\CD_{\bar d}(M',M)\cap \CL_{\bar A,\bar d}}&=(1+o(1))q_0^M \theta^{-\sum_{i=1}^M d_i}\prod_{i=1}^M\Phi(A_i)\sum_{j=0}^{M'}\sum_{\substack{ S\subseteq [M']:\\ |S|=j}}(-1)^j \theta^{-j}\\
		&=(1+o(1))q_0^M \theta^{-\sum_{i=1}^M d_i}(1-\theta^{-1})^{M'}\prod_{i=1}^M\Phi(A_i),
		\ea \ee 
		where the $1+o(1)$ and the product  on the right-hand side are independent of $S$ and $j$ and can therefore be taken out of the double sum. Now, recall the definition of the variables $X_j^{(n)}(B),X_{\geq j}^{(n)}(B)$ as in~\eqref{eq:xnwrt}. Combining~\eqref{eq:probdhl} and \eqref{eq:condlimit}, we arrive at
		\be\ba \label{eq:bigprod}
		\mathbb E\Bigg[\prod_{k=1}^{K'}\Big(X_{j_k}^{(n)}(B_k)\Big)_{c_k}\prod_{k=K'+1}^K \Big(X_{\geq j_k}^{(n)}(B_k)\Big)_{c_k}\Bigg]
		&=(n)_M \P{ \CD_{\bar d}(M',M)\cap \CL_{\bar A.\bar d} }\\
		&\sim q_0^M \theta^{M\log_\theta n-\sum_{i=1}^M d_i}(1-\theta^{-1})^{M'}\prod_{i=1}^M\Phi(A_i),
		\ea\ee 
		since $(n)_M:=n(n-1)\cdots (n-(M-1))=(1+o(1))n^M$ and where we recall that $a_n\sim b_n$ denotes $\lim_{n\to\infty}a_n/b_n=1$. We now recall that there are exactly $c_k$ many $d_i$ and $A_i$ that equal $\lfloor \log_2n\rfloor +j_k$ and $B_k$, respectively, for each $k\in[K]$ and that $j_{K'+1}=\ldots =j_K$, so that 
		\be \ba \label{eq:mult}
		\prod_{i=1}^M \Phi(A_i)&=\prod_{k=1}^K \Phi(B_k)^{c_k},\\
		M\log_\theta n-M'-\sum_{i=1}^M d_i &= -\sum_{k=1}^{K'}(j_k+1-\eps_n)c_k-\sum_{k=K'+1}^K (j_K-\eps_n)c_k,
		\ea \ee 
		which, combined with~\eqref{eq:bigprod}, finally yields
		\be \ba\label{eq:fin}
		\mathbb E\Bigg[\!\prod_{k=1}^{K'}\!\bigg(\!X_{j_k}^{(n)}(B_k)\!\bigg)_{c_k}\prod_{k=K'+1}^K\!\! \bigg(\!X_{\geq j_k}^{(n)}(B_k)\!\bigg)_{c_k}\!\Bigg]\!={}&(1+o(1))\prod_{k=1}^{K'} \big(q_0(1-\theta^{-1})\theta^{-j_k+\eps_n}\Phi(B_k)\big)^{c_k}\\
		&\times \prod_{k=K'+1}^K \big(q_0\theta^{-j_K+\eps_n}\Phi(B_k)\big)^{c_k}.
		\ea \ee 
		To prove the second result, we observe that for $j_1,\ldots, j_K=o(\sqrt{\log n})$, 
		\be 
		\frac{\log v_i-(\log n-(1-\theta^{-1})d_i)}{\sqrt{(1-\theta^{-1})^2 d_i}}=\frac{\log v_i-\mu\log n}{\sqrt{(1-\sigma^2)\log n}}(1+o(1))+o(1).
		\ee 
		Hence, the same steps as above can be applied to the random variables $\wt X^{(n)}_j(B),\wt X^{(n)}_{\geq j}(B)$ to obtain the desired result.		
	\end{proof}
	
	We finally prove Proposition~\ref{prop:deglocwrt}. This result extends and improves Proposition~\ref{lemma:degprobasymp} and~\cite[Lemma $5.10$]{EslLodOrt21}, which one could think of analogous result with $\ell_i=n^\eps$ for all $i\in[k]$ and some $\eps>0$ small. We split the proof of the proposition into three main parts. We first prove an upper bound for~\eqref{eq:degdistr}, then prove a matching lower bound for~\eqref{eq:degdistr} (up to error terms) and finally prove~\eqref{eq:degdistrtail}.
	
	\begin{proof}[Proof of Proposition~\ref{prop:deglocwrt}, Equation~\eqref{eq:degdistr}, upper bound]
		We assume without loss of generality that $\ell_1, \ldots, \ell_k$ are integer-valued. If they would not be, we would use $\lceil \ell_1\rceil, \ldots, \lceil \ell_k\rceil$ which yields the same result. By first conditioning on the value of $v_1, \ldots, v_k$, we obtain 
		\be 
		\P{\Zm_n(v_i)= d_i, v_i>\ell_i, i\in[k]}=\frac{1}{(n)_k}\sum_{j_1=\ell_1+1}^n \sum_{\substack{j_2=\ell_2+1\\ j_2\neq j_1}}^n \cdots \sum_{\substack{j_k=\ell_k+1\\ j_k\neq j_{k-1}, \ldots, j_1}}^n \P{\Zm_n(j_i)=d_i, i\in[k]}.
		\ee 
		If we let $\CP_k$ be the set of all permutations on $[k]$, we can write the sums on the right-hand side as 
		\be \label{eq:aimbound}
		\frac{1}{(n)_k}\sum_{\pi\in\CP_k}\sum_{j_{\pi(1)}=\ell_{\pi(1)}}^n \sum_{j_{\pi(2)}=(\ell_{\pi(2)}\vee j_{\pi(1)})+1}^n \cdots \sum_{j_{\pi(k)}=(\ell_{\pi(k)}\vee j_{\pi(k-1)})+1}^n \P{\Zm_n(j_i)=d_i, i\in[k]}.
		\ee 
		To prove an upper bound of this expression, we first consider the identity permutation, i.e.\ $\pi(i)=i$ for all $i\in[k]$, and take
		\be\label{eq:aimub}
		\frac{1}{(n)_k}\sum_{j_1=\ell_1}^n \sum_{j_2=(\ell_2\vee j_1)+1}^n \cdots \sum_{j_k=(\ell_k\vee j_{k-1})+1}^n \P{\Zm_n(j_i)=d_i, i\in[k]}.
		\ee
		One can think of this as all realisations $v_i=j_i, i\in[k]$ where $j_1<j_2<\ldots <j_k$ and $j_i>\ell_i$ for all $i\in[k]$. We discuss what changes when using other $\pi\in\CP_k$ in~\eqref{eq:aimbound} later on. Let us introduce the event
		\be \label{eq:en}
		E_n^{(1)}:=\bigg\{ \sum_{\ell=1}^j W_\ell \in ((1-\zeta_n)\E{W}j,(1+\zeta_{n})\E{W}j),\ \forall \ n^\eta\leq j\leq n\bigg\},
		\ee 
		where $\zeta_n=n^{-\delta\eta}/\E{W}$ for some $\delta\in(0,1/2)$ and where we recall $n^\eta$ is a lower bound for all $\ell_i,i\in[k]$, with $\eta\in(0,1)$. It follows from Lemma~\ref{lemma:weightsumbounds} that $\mathbb P((E_n^{(1)})^c)=o(n^{-\gamma})$ for any $\gamma>0$.  We can hence bound~\eqref{eq:aimub} from above, for any $\gamma>0$, by
		\be \ba \label{eq:enbound}
		\frac{1}{(n)_k}\sum_{j_1=\ell_1}^n\ldots \!\!\!\!\sum_{j_k=(\ell_k\vee j_{k-1})+1}^n\!\!\!\!\!\!\!\!\mathbb E[\Pf{\Zm_n(j_\ell)=m_\ell,\ell\in[k]}\ind_{E_n^{(1)}}]+o(n^{-\gamma}),
		\ea \ee 
		Now, to express the first term in~\eqref{eq:enbound} we introduce the ordered indices $j_i<m_{1,i}<\ldots<m_{d_i,i}\leq n,i\in[k]$, which denote the steps at which vertex $j_i$ increases its degree by one. Note that for every $i\in[k]$ these indices are distinct by definition, but we also require that $m_{s,i}\neq m_{t,h}$ for any distinct $i,h\in[k],s\in[d_i],t\in[d_h]$ (equality is allowed only when $i=h$ and $s=t$). We denote this constraint by adding a $*$ on the summation symbol. If we also define $j_{k+1}:=n$, we can write the first term in~\eqref{eq:enbound} as
		\be\ba \label{eq:ubfirst}
		\frac{1}{(n)_k}{}&\sum_{j_1=\ell_1}^n\ldots \!\!\!\!\sum_{j_k=(\ell_k\vee j_{k-1})+1}^n\ \, \sideset{}{^*}\sum_{\substack{j_i<m_{1,i}<\ldots<m_{d_i,i}\leq n,\\i\in[k]}}\mathbb E\Bigg[\prod_{t=1}^k\prod_{s=1}^{d_t}\frac{W_{j_t}}{\sum_{\ell=1}^{m_{s,t}-1}W_\ell}\\ &\times \prod_{u=1}^k\!\!\prod_{\substack{s=j_u+1\\s\neq m_{i,t},t\in[d_i],i\in[k]}}^{j_{u+1}}\!\!\!\!\bigg(1-\frac{\sum_{\ell=1}^u W_{j_\ell}}{\sum_{\ell=1}^{s-1}W_{\ell}}\bigg)\ind_{E_n^{(1)}}\Bigg].
		\ea \ee   
		We then include the terms where $s=m_{i,t}$ for $i\in[d_t],t\in[k]$ in the second double product. To do this, we need to change the first double product to
		\be \label{eq:fracbound}
		\prod_{t=1}^k \prod_{s=1}^{d_t}\frac{W_{j_t}}{\sum_{\ell=1}^{m_{s,t}-1}W_\ell-\sum_{\ell=1}^k W_{j_\ell}\ind_{\{m_{s,t}>j_\ell\}}}\leq\prod_{t=1}^k \prod_{s=1}^{d_t} \frac{W_{j_t}}{\sum_{\ell=1}^{m_{s,t}-1}W_\ell-k},
		\ee 
		that is, we subtract the vertex-weight $W_{j_\ell}$ in the numerator when the vertex  $j_\ell$ has already been introduced by step $m_{s,t}$. In the upper bound we use that the weights are bounded from above by one. We thus arrive at the upper bound		
		\be\ba \label{eq:tailprobmiddle}
		\frac{1}{(n)_k}\sum_{j_1=\ell_1}^n\ldots \!\!\!\!\sum_{j_k=(\ell_k\vee j_{k-1})+1}^n\ \, \sideset{}{^*}\sum_{\substack{j_i<m_{1,i}<\ldots<m_{d_i,i}\leq n,\\i\in[k]}}\!\!\!\mathbb E\Bigg[{}&\prod_{t=1}^k\prod_{s=1}^{d_t}\frac{W_{j_t}}{\sum_{\ell=1}^{m_{s,t}-1}W_\ell-k}\\
		&\times \prod_{u=1}^k\prod_{s=j_u+1}^{j_{u+1}}\bigg(1-\frac{\sum_{\ell=1}^u W_{j_\ell}}{\sum_{\ell=1}^{s-1}W_\ell}\bigg)\ind_{E_n^{(1)}}\Bigg].
		\ea \ee  
		For ease of writing, for now we only consider the inner sum until we actually intend to sum over the indices $j_1,\ldots,j_k$ later on in~\eqref{eq:intstep1}. We use the bounds from the event $E_n^{(1)}$ defined in~\eqref{eq:en} to bound 
		\be
		\sum_{\ell=1}^{m_{s,t}-1}W_\ell\geq (m_{s,t}-1)\E W(1-\zeta_n),\qquad \sum_{\ell=1}^{s-1}W_\ell \leq s\E W (1+\zeta_n).
		\ee
		For $n$ sufficiently large, we observe that $(m_{s,t}-1)\E W(1-\zeta_n)-k\geq m_{s,t}\E W(1-2\zeta_n)$, which yields the upper bound
		\be \ba 
		\frac{1}{(n)_k}\ \, \sideset{}{^*}\sum_{\substack{j_i<m_{1,i}<\ldots<m_{d_i,i}\leq n,\\i\in[k]}}\!\!\!\!\mathbb E\Bigg[\prod_{t=1}^k\prod_{s=1}^{d_t}\frac{W_{j_t}}{m_{s,t}\E W (1-2\zeta_n)} \prod_{u=1}^k\prod_{s=j_u+1}^{j_{u+1}}\!\!\bigg(1-\frac{\sum_{\ell=1}^u W_{j_\ell}}{s\E W(1+\zeta_n)}\bigg)\ind_{E_n^{(1)}}\Bigg].
		\ea \ee
		We can now bound the indicator from above by one. Moreover, relabelling the vertex-weights $W_{j_t}$ to $W_t$ for $t\in[k]$ does not change the distribution of the terms within the expected value, so that the expected value remains unchanged. We thus arrive at the upper bound
		\be \label{eq:probub}
		\frac{1}{(n)_k}\ \, \sideset{}{^*}\sum_{\substack{j_i<m_{1,i}<\ldots<m_{d_i,i}\leq n,\\i\in[k]}}\!\!\!\!\mathbb E\Bigg[\prod_{t=1}^k\prod_{s=1}^{d_t}\frac{W_t}{m_{s,t}\E W (1-2\zeta_n)} \prod_{u=1}^k\prod_{s=j_u+1}^{j_{u+1}}\!\!\bigg(1-\frac{\sum_{\ell=1}^u W_\ell}{s\E W(1+\zeta_n)}\bigg)\Bigg].
		\ee 
		We bound the final product from above by
		\be \ba \label{eq:expbound}
		\prod_{s=j_u+1}^{j_{u+1}}\bigg(1-\frac{\sum_{\ell=1}^u W_\ell}{s\E W(1+\zeta_n)}\bigg)&\leq \exp\bigg(-\frac{1}{\E W(1+\zeta_n)} \sum_{s=j_u+1}^{j_{u+1}}\frac{\sum_{\ell=1}^u W_\ell}{s}\bigg)\\
		&\leq \exp\bigg(-\frac{1}{\E W(1+\zeta_n)} \sum_{\ell=1}^u W_\ell \log\Big(\frac{j_{u+1}}{j_u+1}\Big)\bigg)\\
		&= \Big(\frac{j_{u+1}}{j_u+1}\Big)^{-\sum_{\ell=1}^u W_\ell/(\E W(1+\zeta_n))}.
		\ea\ee 
		As the weights are almost surely bounded by one, we thus find
		\be\ba
		\prod_{s=j_u+1}^{j_{u+1}}\bigg(1-\frac{\sum_{\ell=1}^u W_\ell}{s\E W(1+\zeta_n)}\bigg)&\leq \Big(\frac{j_{u+1}}{j_u}\Big)^{-\sum_{\ell=1}^u W_\ell/(\E W(1+\zeta_n))}\Big(1+\frac{1}{j_u}\Big)^{k/(\E W(1+\zeta_n))}\\
		&=\Big(\frac{j_{u+1}}{j_u}\Big)^{-\sum_{\ell=1}^u W_\ell/(\E W(1+\zeta_n))}(1+o(1)).
		\ea \ee 
		Using this upper bound in~\eqref{eq:probub} and setting 
		\be \label{eq:a}
		a_t':=\frac{W_t}{\E W(1+\zeta_n)},\qquad t\in[k],
		\ee 
		we obtain
		\be\ba \label{eq:expomit}
		\frac{1}{(n)_k}{}&\ \,\sideset{}{^*}\sum_{\substack{j_i<m_{1,i}<\ldots<m_{d_i,i}\leq n,\\i\in[k]}}\!\!\!\!\!\mathbb E\Bigg[\prod_{t=1}^k \bigg(a_t'^{d_t}\prod_{s=1}^{d_t}\frac{1+\zeta_n}{m_{s,t}(1-2\zeta_n)}\bigg)\prod_{u=1}^k\Big(\frac{j_{u+1}}{j_u}\Big)^{-\sum_{\ell=1}^u a_\ell'}\Bigg] (1+o(1))\\
		={}&\frac{1}{(n)_k}\ \,\sideset{}{^*}\sum_{\substack{j_i<m_{1,i}<\ldots<m_{d_i,i}\leq n,\\i\in[k]}}\!\!\!\Big(\frac{1+\zeta_n}{1-2\zeta_n}\Big)^{-\sum_{t=1}^k d_t}\mathbb E\Bigg[\prod_{t=1}^k\Big( a_t'^{d_t} (j_t/n)^{a_t'}\prod_{s=1}^{d_t}\frac{1}{m_{s,t}}\Big) \Bigg](1+o(1)),
		\ea \ee 		
		where in the last step we recall that $j_{k+1}=n$. Since $d_t\leq c\log n$ for all $t\in[k]$, $j_t>\ell_t>n^\eta$ for all $t\in[k]$, and $\zeta_n=n^{-\delta\eta}/\E W$, it readily follows that 
		\be \label{eq:fracs}
		\Big(\frac{1+\zeta_n}{1-2\zeta_n}\Big)^{-\sum_{t=1}^k d_t}=1+o(1),\quad \text{and}\quad a_t'^{d_t}\Big(\frac{j_t}{n}\Big)^{a_t'}=\Big(\frac{W_t}{\E W}\Big)^{d_t}\Big(\frac{j_t}{n}\Big)^{W_t/\E W}(1+o(1)).
		\ee
		We can thus omit the first term from~\eqref{eq:expomit} as well as use $a_t:=W_t/\E W$ instead of $a_t'$ at the cost of an additional $1+o(1)$ term. So, we obtain
		\be \label{eq:simplify}
		\frac{1}{(n)_k}\ \,\sideset{}{^*}\sum_{\substack{j_i<m_{1,i}<\ldots<m_{d_i,i}\leq n,\\i\in[k]}}\mathbb E\Bigg[\prod_{t=1}^k\Big( a_t^{d_t} (j_t/n)^{a_t}\prod_{s=1}^{d_t}\frac{1}{m_{s,t}}\Big) \Bigg](1+o(1))
		\ee 
		We then bound this from above even further by no longer constraining the indices $m_{s,t}$ to be distinct (so that the $*$ in the sum is omitted). That is, for different $t_1,t_2\in[k]$, we allow $m_{s_1,t_1}=m_{s_2,t_2}$ to hold for any $s_1\in[d_{t_1}],s_2\in[d_{t_2}]$. This also allows us to interchange the sum and the first product. We bound the sums from above by multiple integrals, which yields 
		\be \ba\label{eq:intbound2}
		\frac{1}{(n)_k}\E{ \prod_{t=1}^k a_t^{d_t}(j_t/n)^{a_t} \int_{j_t}^n \int _{x_{1,t}}^n\cdots \int_{x_{d_t-1,t}}^n \prod_{s=1}^{d_t}x_{s,t}^{-1}\,\d x_{d_t,t}\ldots\d x_{1,t}}(1+o(1)).
		\ea \ee 
		Applying Lemma~\ref{lemma:logints} with $a=j_t, b=n$, we then obtain
		\be 
		\frac{1}{(n)_k}\E{ \prod_{t=1}^k (n/j_t)^{-a_t} \frac{(a_t\log(n/j_t))^{d_t}}{d_t!}}(1+o(1)).
		\ee 
		Reintroducing the sums over the indices $j_1,\ldots, j_k$ (which were omitted after~\eqref{eq:tailprobmiddle}), we arrive at
		\be \label{eq:intstep1}
		\frac{1}{(n)_k} \sum_{j_1=\ell_1}^n\ldots \!\!\!\!\sum_{j_k=(\ell_k\vee j_{k-1})+1}^n\E{ \prod_{t=1}^k(n/j_t)^{-a_t}\frac{(a_t\log(n/j_t))^{d_t}}{d_t!}}(1+o(1)).
		\ee 
		We observe that switching the order of the indices $j_1,\ldots,j_k$ (and their respective bounds $\ell_1,\ldots, \ell_k$) achieves the same result as permuting the $d_1,\ldots,d_k$ and $a_1,\ldots, a_k$. Hence, if we take any $\pi\in\CP_k$, then as in~\eqref{eq:aimbound} and~\eqref{eq:enbound},
		\be \ba 
		\frac{1}{(n)_k}{}&\sum_{j_{\pi(1)}=\ell_{\pi(1)}}^n \sum_{j_{\pi(2)}=(\ell_{\pi(2)}\vee j_{\pi(1)})+1}^n\!\!\! \cdots\!\!\! \sum_{j_{\pi(k)}=(\ell_{\pi(k)}\vee j_{\pi(k-1)})+1}^n \!\!\!\!\!\!\E{\Pf{\Zm_n(j_i)=d_i, i\in[k]}\ind_{E_n^{(1)}}}\\
		&\leq \frac{1}{(n)_k}\E{\sum_{j_{\pi(1)}=\ell_{\pi(1)}}^n\!\!\!\cdots \!\!\!\!\sum_{j_{\pi(k)}=(\ell_{\pi(k)}\vee j_{\pi(k-1)})+1}^n\prod_{t=1}^k(n/j_t)^{-a_t}\frac{(a_t\log(n/j_t))^{d_t}}{d_t!}}.
		\ea \ee 
		As a result, reintroducing the sum over all $\pi\in\CP_k$, we arrive at 
		\be \ba 
		\frac{1}{(n)_k}{}&\sum_{\pi\in\CP_k}\sum_{j_{\pi(1)}=\ell_{\pi(1)}}^n \sum_{j_{\pi(2)}=(\ell_{\pi(2)}\vee j_{\pi(1)})+1}^n\!\!\! \cdots\!\!\! \sum_{j_{\pi(k)}=(\ell_{\pi(k)}\vee j_{\pi(k-1)})+1}^n\!\!\!\!\!\! \E{\Pf{\Zm_n(j_i)=d_i, i\in[k]}\ind_{E_n^{(1)}}}\\
		&\leq\frac{1}{(n)_k}\E{ \sum_{\pi\in\CP_k}\sum_{j_{\pi(1)}=\ell_{\pi(1)}}^n\!\!\!\cdots\!\!\! \!\!\!\!\sum_{j_{\pi(k)}=(\ell_{\pi(k)}\vee j_{\pi(k-1)})+1}^n\prod_{t=1}^k(n/j_t)^{-a_t}\frac{(a_t\log(n/j_t))^{d_t}}{d_t!}}(1+o(1))\\
		&=\frac{1}{(n)_k}\E{ \sum_{j_1=\ell_1+1}^n\sum_{\substack{j_2=\ell_2+1\\j_2\neq j_1}}\ldots \!\!\!\!\sum_{\substack{j_k=\ell_k+1\\ j_k\neq j_{k-1,\ldots, j_1}}}^n\prod_{t=1}^k(n/j_t)^{-a_t}\frac{(a_t\log(n/j_t))^{d_t}}{d_t!}}(1+o(1)).		
		\ea \ee 
		We now bound these sums from above by allowing each index $j_i$ to take \emph{any} value in $\{\ell_i+1,\ldots, n\}$ for all $i\in[k]$, independent of the values of the other indices. Moreover, since the weights $W_1,\ldots, W_k$, and hence $a_1, \ldots, a_k,$ are independent,  this yields the upper bound
		\be \label{eq:prodsumub}
		\prod_{t=1}^k\E{ \frac1n\sum_{j_t=\ell_t+1}^n(n/j_t)^{-a_t}\frac{(a_t\log(n/j_t))^{d_t}}{d_t!}}(1+o(1)),
		\ee 
		so that we can now deal with each sum independently instead of $k$ sums at the same time. First, we note that $(n/j_t)^{a_t}(\log(n/j_t))^{d_t}$ is increasing on $(0,n\exp(-d_t/a_t))$, maximised at $n\exp(-d_t/a_t)$, and decreasing on $(n\exp(-d_t/a_t),n]$ for all $t\in[k]$. To provide the optimal bound, we want to know whether this maximum is attained in $[\ell_t+1,n]$ or not. That is, whether $n\exp(-d_t/a_t)\in[\ell_t+1,n]$ or not. To this end, we let 
		\be \label{eq:ct}
		c_t:=\limsup_{n\to\infty}\frac{d_t}{\log n}, \qquad t\in[k],
		\ee 
		and consider two cases:
		\begin{enumerate}
			\item[\namedlabel{item:1}{$(1)$}] $c_t\in[0,1/(\theta-1)]$, $t\in[k]$.
			\item[\namedlabel{item:2}{$(2)$}] $c_t\in(1/(\theta-1),c)$, $t\in[k]$.
		\end{enumerate}
		Clearly, when $c\leq 1/(\theta-1)$ the second case can be omitted, so that without loss of generality we can assume $c>1/(\theta-1)$. In the second case, it directly follows that the maximum is almost surely attained at  
		\be
		n\exp(-d_t/a_t)\leq n\exp(-c_t\log n (\theta-1)(1+o(1)))=n^{1-c_t(\theta-1)(1+o(1))}=o(1),
		\ee
		so that the summand $(n/j_t)^{-a_t}(a_t\log(n/j_t))^{d_t}$ is almost surely decreasing in $j_t$ when $\ell_t< j_t\leq n$. In the first case, such a conclusion cannot be made in general and depends on the precise value of $W_t$. Therefore, the first case requires a more involved approach. We first assume case \ref{item:1} holds and discuss what simplifications can be made when case $\ref{item:2}$ holds afterwards. In the first case, we use Lemma~\ref{lemma:sumint} to bound each sum from above by
		\be \ba \label{eq:ubsumint1}
		\frac{1}{n} \sum_{j_t=\ell_t+1}^n (n/j_t)^{-a_t}\frac{(a_t\log(n/j_t))^{d_t}}{d_t!}\leq \frac{1}{n} \int_{\ell_t}^n (n/x_t)^{-a_t}\frac{(a_t\log(n/x_t))^{d_t}}{d_t!}\,\d x_t+\frac{1}{n}.
		\ea\ee 
		Here, we use that the summand is at most one, since
			\be \label{eq:poiprob2}
			\Big(\frac{j_t}{n}\Big)^{a_t}\frac{(a_t\log(n/j_t))^{d_t}}{d_t!}=\Pf{\text{Poi}(a_t,j_t)=d_t}\leq 1,
			\ee 
			irrespective of $a_t\in(0,\infty)$ and $j_t\in\N$ and where $\text{Poi}(a_t,j_t)$ is a Poisson random variable with rate $a_t\log(n/j_t)$, conditionally on $W_t$. In case \ref{item:2} the summand on the left-hand side is decreasing in $j_t$, so that we arrive at an upper bound without the additional error term $1/n$. Using a substitution $y_t:=\log(n/x_t)$, we obtain 
		\be \ba\label{eq:firstsumub}
		\frac{a_t^{d_t}}{(1+a_t)^{d_t+1}}{}&\int_0^{\log(n/\ell_t)}\frac{(1+a_t)^{d_t+1}}{d_t!}y_t^{d_t}\e^{-(1+a_t)y_t}\,\d y_t+\frac1n\\
		&=\frac{a_t^{d_t}}{(1+a_t)^{d_t+1}}\Pf{Y_t<\log(n/\ell_t)}+\frac1n, 
		\ea \ee 
		where, conditionally on $W_t$, $Y_t\sim \text{Gamma}(d_t+1,1+a_t)$. We recall that we redefined $a_t:=W_t/\E W=W_t/(\theta-1)$. Since $X_t:=(1+W_t/(\theta-1))Y_t\sim\text{Gamma}(d_t+1,1)$, we obtain 
		\be \label{eq:xtintro}
		\frac{\theta-1}{\theta-1+W_t}\Big(\frac{W_t}{\theta-1+W_t}\Big)^{d_t}\Pf{X_t<\Big(1+\frac{W_t}{(\theta-1)}\Big)\log(n/\ell_t)}+\frac1n.
		\ee 
		Using this in~\eqref{eq:prodsumub}, we arrive at an upper bound of the form
		\be \label{eq:ubwitherror}
		\prod_{t=1}^k \E{\frac{\theta-1}{\theta-1+W}\Big(\frac{W}{\theta-1+W}\Big)^{d_t}\Pf{X_t<\Big(1+\frac{W}{(\theta-1)}\Big)\log(n/\ell_t)}+\frac1n}(1+o(1)),
		\ee 
		where we recall that in each term of the product, the additive term $1/n$ is present only when $d_t$ satisfies case~\ref{item:1} and can be omitted when $d_t$ satisfies case~\ref{item:2}. Moreover, we have omitted the indices of the weights as they are all i.i.d. By Lemma~\ref{lemma:littleo} in the~\hyperref[sec:appendix]{Appendix}, the term $1/n$ can be included in the $o(1)$ in the square brackets when $d_t$ satisfies case~\ref{item:1}. Thus, we finally obtain
		\be\label{eq:ubwithouterror}
		\prod_{t=1}^k \E{\frac{\theta-1}{\theta-1+W}\Big(\frac{W}{\theta-1+W}\Big)^{d_t}\Pf{X_t<\Big(1+\frac{W}{(\theta-1)}\Big)\log(n/\ell_t)}}(1+o(1)),
		\ee 
		as desired. This concludes the upper bound of the first term in~\eqref{eq:enbound}. Since we can choose $\gamma$ arbitrarily large in the second term in~\eqref{eq:enbound}, we can use the same argument as in Lemma~\ref{lemma:littleo} (\eqref{eq:neglb} through~\eqref{eq:ctbound} in particular), but now using that $d_t\leq c\log n<\theta/(\theta-1)\log n$, to obtain that the second term in~\eqref{eq:enbound} can be included in the $o(1)$ term of the final expression of the upper bound as well in both case~\ref{item:1} and~\ref{item:2}, which concludes the proof of the upper bound. 
	\end{proof}
	
	We now provide a lower bound for~\eqref{eq:degdistr}, which uses many of the definitions and steps provided in the proof for the upper bound.
	
	\begin{proof}[Proof of Proposition~\ref{prop:deglocwrt}, Equation~\eqref{eq:degdistr}, lower bound]
		We define the event 
		\be \label{eq:wten}
		E_n^{(2)}:=\Big\{\sum_{\ell=k+1 }^j W_\ell\in(\E{W}(1-\zeta_n)j,\E{W}(1+\zeta_n)j),\ \forall\  n^\eta\leq j\leq n\Big\}.
		\ee
		We then  again have~\eqref{eq:aimbound} and start by considering the identity permutation, $\pi(i)=i$ for all $i\in[k]$, as in~\eqref{eq:aimub}, by omitting the second term in~\eqref{eq:enbound}, and using the event $E_n^{(2)}$ instead of $E_n^{(1)}$. This yields the lower bound
		\be \ba 
		\frac{1}{(n)_k}{}&\sum_{j_1=\ell_1+1}^n\ldots \!\!\!\!\sum_{j_k=(\ell_k\vee j_{k-1})+1}^n\!\!\!\!\!\!\!\!\mathbb E[\Pf{\Zm_n(j_\ell)=m_\ell,\ell\in[k]}\ind_{E_n^{(2)}}]\\
		\geq{}&\frac{1}{(n)_k}\sum_{j_1=\ell_1+1}^n\ldots \!\!\!\!\sum_{j_k=(\ell_k\vee j_{k-1})+1}^n\ \, \sideset{}{^*}\sum_{\substack{j_i<m_{1,i}<\ldots<m_{d_i,i}\leq n,\\i\in[k]}}\mathbb E\Bigg[\prod_{t=1}^k\prod_{s=1}^{d_t}\frac{W_{j_t}}{\sum_{\ell=1}^{m_{s,t}-1}W_\ell}\\ &\times \prod_{u=1}^k\!\!\prod_{\substack{s=j_u+1\\s\neq m_{i,t},t\in[d_i],i\in[k]}}^{j_{u+1}}\!\!\!\!\bigg(1-\frac{\sum_{\ell=1}^u W_{j_\ell}}{\sum_{\ell=1}^{s-1}W_{\ell}}\bigg)\ind_{E_n^{(2)}}\Bigg].
		\ea \ee  
		We omit the constraint $s\neq m_{\ell,i},\ell\in[d_i],i\in[k]$ in the final product. As this introduces more multiplicative terms smaller than one, we obtain a lower bound. Then, in the two denominators, we bound the vertex-weights $W_{j_1},\ldots, W_{j_k}$ from above by one and below by zero, respectively, to obtain a lower bound \be\ba 
		\frac{1}{(n)_k}\sum_{j_1=\ell_1+1}^n\ldots \!\!\!\!\sum_{j_k=(\ell_k\vee j_{k-1})+1}^n\ \, \sideset{}{^*}\sum_{\substack{j_i<m_{1,i}<\ldots<m_{d_i,i}\leq n,\\i\in[k]}}\!\!\!\!\mathbb E\Bigg[{}&\prod_{t=1}^k\prod_{s=1}^{d_t}\frac{W_{j_t}}{\sum_{\ell=1}^{m_{s,t}-1}W_\ell\ind_{\{\ell\neq j_t,t\in[k]\}}+k}\\
		&\times \prod_{u=1}^k\prod_{s=j_u+1}^{j_{u+1}}\!\!\!\bigg(1-\frac{\sum_{\ell=1}^u W_{j_\ell}}{\sum_{\ell=1}^{s-1}W_\ell\ind_{\{\ell\neq j_t,t\in[k]\}}}\bigg)\ind_{E_n^{(2)}}\Bigg].
		\ea\ee 
		As a result, we can now swap the labels of $W_{j_t}$ and $W_t$ for each $t\in[k]$, which again does not change the expected value, but it changes the value of the two denominators to $\sum_{\ell=k+1}^{m_{s,t}}W_\ell+k$ and $\sum_{\ell=k+1}^{m_{s,t}}W_\ell$, respectively. After this we use the bounds in $E_n^{(2)}$ on these sums in the expected value to obtain a lower bound. Finally, we note that the (relabelled) weights $W_t,t\in[k],$ are independent of $E_n^{(2)}$ so that we can take the indicator out of the expected value. Combining all of the above steps, we arrive at the lower bound
		\be \ba \label{eq:lbas}
		\frac{1}{(n)_k}\sum_{j_1=\ell_1+1}^n\ldots \!\!\!\!\sum_{j_k=(\ell_k\vee j_{k-1})+1}^n\ \,{}& \sideset{}{^*}\sum_{\substack{j_i<m_{1,i}<\ldots<m_{d_i,i}\leq n,\\i\in[k]}}\mathbb E\Bigg[\prod_{t=1}^k \Big(\frac{W_t}{\E{W}}\Big)^{d_t} \prod_{s=1}^{d_t}\frac{1}{m_{s,t}(1+2\zeta_n)}\\ &\times\prod_{u=1}^k\prod_{s=j_u+1}^{j_{u+1}}\bigg(1-\frac{\sum_{\ell=1}^u W_\ell}{(s-1)\E W(1-\zeta_n)}\bigg)\Bigg]\mathbb P(E_n^{(2)}).\\
		\ea \ee 
		The $1+2\zeta_n$ in the fraction on the first line arises from the fact that, for $n$ sufficiently large, $(m_{s,t}-1)(1+\zeta_n)+k\leq m_{s,t}(1+2\zeta_n)$. It follows from Lemma~\ref{lemma:weightsumbounds} that $\mathbb P(E_n^{(2)})=1-o(n^{-\gamma})$ for any $\gamma>0$. Similar to the calculations in \eqref{eq:expbound} and using $\log(1-x)\geq -x-x^2$ for $x$ small, we obtain an almost sure lower bound for the final product for $n$ sufficiently large of the form 
		\be
		\prod_{s=j_u+1}^{j_{u+1}}\bigg(1-\frac{\sum_{\ell=1}^u W_\ell}{(s-1)\E W(1-\zeta_n)}\bigg)\geq \Big(\frac{j_{u+1}}{j_u}\Big)^{-\sum_{\ell=1}^u W_\ell/(\E W(1-\zeta_n))}(1-o(1)).
		\ee 
		Using this in~\eqref{eq:lbas} yields the lower bound
		\be \label{eq:lbstep}
		\frac{1}{(n)_k}\!\sum_{j_1=\ell_1+1}^n\ldots \!\!\!\!\!\!\sum_{j_k=(\ell_k\vee j_{k-1})+1}^n\ \, \sideset{}{^*}\sum_{\substack{j_i<m_{1,i}<\ldots<m_{d_i,i}\leq n,\\i\in[k]}}\!\!\!\!\!\!\!\!\!\!\!\!\!(1-o(1))\Big(\frac{1-\zeta_n}{1+2\zeta_n}\Big)^{\sum_{t=1}^k d_t} \mathbb E\Bigg[\prod_{t=1}^k \wt a_t^{d_t} \Big(\frac{j_t}{n}\Big)^{\wt a_t}\prod_{s=1}^{d_t}\frac{1}{m_{s,t}} \Bigg],
		\ee  
		where $\wt a_t:=W_t/(\E W(1-\zeta_n))$. Since $d_t\leq c\log n$ and $j_t\geq \ell_t\geq n^\eta$ for all $t\in[k]$, and $\zeta_n=n^{-\eta\delta}/\E W$ for some $\delta\in(0,1/2)$, we have as in~\eqref{eq:fracs}, that 
		\be 
		\Big(\frac{1-\zeta_n}{1+2\zeta_n}\Big)^{\sum_{t=1}^k d_t}=1-o(1), \quad \text{and}\quad \wt a_t^{d_t}\Big(\frac{j_t}{n}\Big)^{\wt a_t}=a_t^{d_t}\Big(\frac{j_t}{n}\Big)^{a_t}(1-o(1)), 
		\ee 
		where $a_t:=W_t/\E W$.	This yields
		\be 
		\frac{1}{(n)_k}\!\sum_{j_1=\ell_1+1}^n\ldots \!\!\!\!\!\!\sum_{j_k=(\ell_k\vee j_{k-1})+1}^n\ \, \sideset{}{^*}\sum_{\substack{j_i<m_{1,i}<\ldots<m_{d_i,i}\leq n,\\i\in[k]}}\!\!\!\!\! \mathbb E\Bigg[\prod_{t=1}^k a_t^{d_t} \Big(\frac{j_t}{n}\Big)^{ a_t}\prod_{s=1}^{d_t}\frac{1}{m_{s,t}} \Bigg](1-o(1)).
		\ee 
		We now bound the sum over the  indices $m_{s,i}$ from below. We note that the expression in the expected value is  decreasing in $m_{s,i}$ and we restrict the range of the indices to $j_i+\sum_{t=1}^k d_t<m_{1,i}<\ldots< i_{d_i,i}\leq n$ for all $i \in[k]$, but no longer constrain the indices to be distinct (so that we can drop the $*$ in the sum). In the distinct sums and the suggested lower bound, the number of values the $m_{s,i}$ take on equal
		\be 
		\prod_{i=1}^k \binom{n-(j_i-1)-\sum_{t=1}^{i-1}d_t}{d_i} \quad \text{and} \quad
		\prod_{i=1}^k \binom{n-(j_i-1)-\sum_{t=1}^k d_t}{d_i},
		\ee 
		respectively. It is straightforward to see that the former allows for more possibilities than the latter, as $\binom{b}{c}> \binom{a}{c}$ when $b> a\geq c$. As we omit the largest values of the expected value (since it decreases in $m_{s,t}$ and we omit the smallest values of $m_{s,t}$), we thus arrive at the lower bound
		\be \ba \label{eq:lbstep2}
		\frac{1}{(n)_k} \!\!\!\sum_{j_1=\ell_1+1}^{n-\sum_{t=1}^k d_t}\!\!\!\!\ldots \!\!\!\!\!\!\sum_{j_k=(\ell_k\vee j_{k-1})+1}^{n-\sum_{t=1}^k d_t}\ \, \sideset{}{^*}\sum_{\substack{j_i+\sum_{t=1}^k d_t<m_{1,i}<\ldots<m_{d_i,i}\leq n,\\i\in[k]}}\!\!\!\!\mathbb E\Bigg[\prod_{t=1}^k a_t^{d_t} \Big(\frac{j_t}{n}\Big)^{ a_t}\prod_{s=1}^{d_t}\frac{1}{m_{s,t}} \bigg](1-o(1)),
		\ea \ee 
		where we also restrict the upper range of the indices of the outer sums, as otherwise there would be a contribution of zero from these values of $j_1,\ldots,j_k$. We now use similar techniques compared to the upper bound of the proof to switch from summation to integration. However, due to the altered bounds on the range of the indices over which we sum and the fact that we require lower bounds rather than upper bound, we face some more technicalities. 
		
		For now, we omit the expected value and focus on the terms
		\be \label{eq:lbsum}
		\frac{1}{(n)_k} \!\!\!\sum_{j_1=\ell_1+1}^{n-\sum_{t=1}^k d_t}\!\!\!\!\ldots \!\!\!\!\!\!\sum_{j_k=(\ell_k\vee j_{k-1})+1}^{n-\sum_{t=1}^k d_t} \sum_{\substack{j_i+\sum_{t=1}^k d_t<m_{1,i}<\ldots<m_{d_i,i}\leq n,\\i\in[k]}}\prod_{t=1}^k  a_t^{d_t}\Big(\frac{j_t}{n}\Big)^{ a_t}\prod_{s=1}^{d_t}\frac{1}{m_{s,t}}.
		\ee 
		We start by restricting the upper bound on the $k$ outer sums to $n-2\sum_{i=1}^k d_i$. This will prove useful later. We set $h_k:=\sum_{t=1}^k d_t$ and bound the inner sum over the indices $m_{s,t}$ from below by 
		\be\ba 
		{}&\sum_{\substack{j_i+h_k<m_{1,i}<\ldots<m_{d_i,i}\leq n,\\i\in[k]}}\prod_{t=1}^k \prod_{s=1}^{d_t}\frac{1}{m_{s,t}} \geq \prod_{t=1}^k \int_{j_t+1+h_k}^{n}\int_{x_{1,t}+1}^{n}\cdots \int_{x_{d_t-1,t}+1}^{n}\prod_{s=1}^{d_t}x_{s,t}^{-1}\,\d x_{d_t,t}\ldots \d x_{1,t}.
		\ea \ee 
		Applying Lemma~\ref{lemma:logints} with $a=j_t+1+h_k$ and $b=n$, and using that $j_t\leq n-2h_k $ (recall that we restricted the upper bound on the outer sums in~\eqref{eq:lbsum} to $n-2h_k$), yields the lower bound
		\be 
		\prod_{t=1}^k \frac{ a_t^{d_t}}{d_t!}\Big(\log\Big(\frac{n}{j_t +2\sum_{i=1}^k d_i}\Big)\Big)^{d_t}.
		\ee 
		Substituting this in \eqref{eq:lbsum} with the restriction on the outer sum discussed after~\eqref{eq:lbsum} yields
		\be\label{eq:finalsum}
		\frac{1}{(n)_k} \!\!\!\sum_{j_1=\ell_1+1}^{n-2\sum_{i=1}^k d_i}\!\!\!\!\ldots \!\!\!\!\!\!\sum_{j_k=(\ell_k\vee j_{k-1})+1}^{n-2\sum_{i=1}^k d_i}\prod_{t=1}^k \Big(\frac{j_t}{n}\Big)^{ a_t}\frac{ a_t^{d_t}}{d_t!}\Big(\log\Big(\frac{n}{j_t +2\sum_{i=1}^k d_i}\Big)\Big)^{d_t} .
		\ee 
		To simplify the summation over $j_1,\ldots,j_k$, we write the summand as 
		\be
		\prod_{t=1}^k\Big(\Big(j_t+2\sum_{i=1}^k d_i\Big)/n\Big)^{ a_t}\frac{ a_t^{d_t}}{d_t!}\Big(\log\Big(\frac{n}{j_t +2\sum_{i=1}^k d_i}\Big)\Big)^{d_t}\bigg(1-\frac{2\sum_{i=1}^k d_i}{j_t +2\sum_{i=1}^k d_i}\bigg)^{ a_t}.
		\ee
		Using that $d_t\leq c \log n,j_t\geq \ell_t\geq n^\eta$ and $x^{ a_t}\geq x^{1/\E W}$ for $x\in(0,1)$ almost surely, we can write the last term as $(1-o(1))$ almost surely. We then shift the bounds on the range of the sums in~\eqref{eq:finalsum} by $2\sum_{i=1}^k d_i$ and let $\wt \ell_i:=\ell_i+2\sum_{t=1}^k d_t$ for all $ i\in[k]$, to obtain the lower bound
		\be \label{eq:sumtointstart}
		\frac{1}{(n)_k} \sum_{j_1=\wt \ell_1+1}^n\sum_{j_2=(\wt \ell_2\vee j_1)+1}^n\!\!\!\ldots \!\!\!\sum_{j_k=(\wt \ell_k\vee j_{k-1})+1}^n\!\!\!\!\!\!\!\!\!\!\!\!(1-o(1))\prod_{t=1}^k \Big(\frac{j_t}{n}\Big)^{ a_t}\frac{ 1}{d_t!}(a_t\log(n/j_t))^{d_t}.
		\ee 
		We recall that this lower bound is achieved for the permutation $\pi$ such that $\pi(i)=i$ for all $i\in[k]$. As the product is invariant to permuting the indices $t\in[k]$, we can use this in~\eqref{eq:aimbound} to obtain 
			\be\ba \label{eq:permsum}
			\frac{1}{(n)_k}{}&\sum_{\pi\in\CP_k}\sum_{j_{\pi(1)}=\ell_{\pi(1)}}^n \sum_{j_{\pi(2)}=(\ell_{\pi(2)}\vee j_{\pi(1)})+1}^n \cdots \sum_{j_{\pi(k)}=(\ell_{\pi(k)}\vee j_{\pi(k-1)})+1}^n \P{\Zm_n(j_i)=d_i, i\in[k]}\\
			\geq{}&\frac{1}{(n)_k}\sum_{j_1=\wt \ell_1+1}^n \sum_{\substack{j_2=\wt \ell_2+1\\j_2\neq j_1}}^n \cdots \sum_{\substack{j_k=\wt \ell_k+1\\ j_k\neq j_1,\ldots, j_{k-1}}}^n (1-o(1))\prod_{t=1}^k \E{\Big(\frac{j_t}{n}\Big)^{ a_t}\frac{ 1}{d_t!}(a_t\log(n/j_t))^{d_t}}.
			\ea\ee 
			We now want to allow for the indices $j_1,\ldots, j_k$ to have the same value. This way, we can more easily evaluate the sums. To do this, we distinguish between two cases in terms of the sizes of $d_1,\ldots, d_k$, namely case~\ref{item:1} and case~\ref{item:2}. In case~\ref{item:1}, we subtract all terms where two or more indices have the same value to avoid creating an upper bound. That is, we write the multiple sums as 
			\be \ba 
			\frac{1}{(n)_k}{}&\sum_{j_1=\wt \ell_1+1}^n \sum_{j_2=\wt \ell_2+1}^n \cdots \sum_{j_k=\wt \ell_k+1}^n (1-o(1))\prod_{t=1}^k \E{\Big(\frac{j_t}{n}\Big)^{ a_t}\frac{ 1}{d_t!}(a_t\log(n/j_t))^{d_t}}\\
			-\frac{1}{(n)_k}{}&\sum_{m=2}^k \sum_{\substack{S\subseteq [k]\\ |S|=m}}\sum_{i\in S}\sum_{j_i=\wt \ell_i+1}^n\  \sideset{}{^*}\sum_{\substack{j_s=\wt \ell_s+1\\ s\in [k]\backslash S}}^n\!\! \E{ \prod_{u\in S}\Big(\frac{j_i}{n}\Big)^{a_u}\frac{(a_u\log(n/j_i))^{d_u}}{d_u!} \!\!\prod_{s\in [k]\backslash S}\!\!\!\Big(\frac{j_s}{n}\Big)^{d_s}\frac{(a_s\log(n/j_s))^{d_s}}{d_s!}}.
			\ea \ee 
			Here, the $*$ in the final sum on the second line indicates that the indices $j_s$ with $s\in [k]\backslash S$ are not allowed to have the same value, nor be equal to $j_i$ for any $i\in S$. The error term on the second line can be bounded from below by bounding the multiple sums from above, which follows an equivalent approach as the proof of the upper bound. By~\eqref{eq:poiprob2} we can omit all terms $u\neq i$ in the product over $u\in S$, as they can be bounded from above by one. Furthermore, we can omit the $*$ in the final sum to obtain an upper bound, so that all indices $j_i$ and $j_s,s\in[k]\backslash S$ can be equal in value. Finally, let us write $S_i:=S\backslash \{i\}$. It then follows from~\eqref{eq:prodsumub} through~\eqref{eq:ubwithouterror} that the error term is at least 
			\be \ba
			-{}&\sum_{m=2}^k \!\frac{1+o(1)}{n^{m-1}}\! \sum_{\substack{S\subseteq [k]\\ |S|=m}}\sum_{i\in S} \prod_{t\in[k]\backslash S_i}\!\!\!\!\E{\frac{\theta-1}{\theta-1+W}\Big(\frac{W}{\theta-1+W}\Big)^{d_t}\Pf{X_t\leq \Big(1+\frac{W}{\theta-1}\Big)\log(n/\wt \ell_t)}}\\
			&\geq -C\sum_{m=2}^k \frac{1}{n^{m-1}}\!\!\!\!\!\!\!\!\!\! \sum_{\substack{S'\subseteq [k]\\ |S|=k-(m-1)}}\!\!\!\!\!\!\!\prod_{t\in S'}\!\E{\frac{\theta-1}{\theta-1+W}\Big(\frac{W}{\theta-1+W}\Big)^{d_t}\Pf{X_t\leq \Big(1+\frac{W}{\theta-1}\Big)\log(n/\wt \ell_t)}},
			\ea \ee 
			for some large constant $C>0$. It remains to take care of the main term,
			\be \ba \label{eq:mainterm}
			\frac{1}{(n)_k}{}&\sum_{j_1=\wt \ell_1+1}^n \sum_{j_2=\wt \ell_2+1}^n \cdots \sum_{j_k=\wt \ell_k+1}^n (1-o(1))\prod_{t=1}^k \E{\Big(\frac{j_t}{n}\Big)^{ a_t}\frac{ 1}{d_t!}(a_t\log(n/j_t))^{d_t}}\\
			&\geq \prod_{t=1}^k\E{\frac1n \sum_{j_t=\wt \ell_t+1}^n\Big(\frac{j_t}{n}\Big)^{ a_t}\frac{ 1}{d_t!}(a_t\log(n/j_t))^{d_t}}(1-o(1)).
			\ea \ee 
			We bound each sum from below by an integral, similar to the proof of the upper bound. We again consider the two cases used in the upper bound, case~\ref{item:1} and case~\ref{item:2}. In case~\ref{item:2}, the summand is decreasing in $j_t$ and hence we can replace the sum by an integral from $\ell_t$ to $n$. In case~\ref{item:1}, we use Lemma~\ref{lemma:sumint} and~\eqref{eq:poiprob2} to obtain the lower bound
			\be 
			\frac1n\sum_{j_t=\wt \ell_t+1}^n\Big(\frac{j_t}{n}\Big)^{ a_t}\frac{ 1}{d_t!}(a_t\log(n/j_t))^{d_t}\geq \int_{\wt \ell_t}^n \Big(\frac{x_t}{n}\Big)^{ a_t}\frac{ 1}{d_t!}(a_t\log(n/x_t))^{d_t}\,\d x_t -\frac1n.
			\ee 
			The same steps as in~\eqref{eq:firstsumub} and~\eqref{eq:xtintro} yield that this equals
			\be 
			\frac{\theta-1}{\theta-1+W_t}\Big(\frac{W_t}{\theta-1+W_t}\Big)^{d_t}\Pf{X_t<\Big(1+\frac{W_t}{(\theta-1)}\Big)\log(n/\wt \ell_t)}-\frac1n.
			\ee 
			Using this in~\eqref{eq:mainterm} and combining it with the bound for the error term, we arrive at the final lower bound
			\be \ba 
			\prod_{t=1}^k{}& \E{\frac{\theta-1}{\theta-1+W_t}\Big(\frac{W_t}{\theta-1+W_t}\Big)^{d_t}\Pf{X_t<\Big(1+\frac{W_t}{(\theta-1)}\Big)\log(n/\wt \ell_t)}-\frac1n}(1-o(1))\\
			&-C\sum_{m=2}^k \frac{1}{n^{m-1}}\!\!\!\!\!\!\!\!\!\! \sum_{\substack{S'\subseteq [k]\\ |S|=k-(m-1)}}\!\!\!\!\!\!\!\prod_{t\in S'}\!\E{\frac{\theta-1}{\theta-1+W}\Big(\frac{W}{\theta-1+W}\Big)^{d_t}\Pf{X_t\leq \Big(1+\frac{W}{\theta-1}\Big)\log(n/\wt \ell_t)}}.
			\ea \ee 
			We can replace $\wt \ell_t$ with $\ell_t$ at the cost of a $1-o(1)$ term, since $\log(n/\wt \ell_t)=\log(n/\ell_t)-o(1)$. It then follows from Lemma~\ref{lemma:littleo} that both the $1/n$ term on the first line as well as the second line can be incorporated into the $1-o(1)$ term.
			
			In case~\ref{item:2}, we know that the summand in~\eqref{eq:permsum} is decreasing in $j_t$ for all $t\in[k]$. Hence, we can omit the smallest values of $j_1, \ldots, j_k$ to obtain a lower bound. This yields 
			\be 
			\frac{1}{(n)_k}{}\sum_{j_1=\wt \ell_1+1}^n \sum_{j_2=\wt \ell_2+2}^n \cdots \sum_{j_k=\wt \ell_k+k}^n (1-o(1))\prod_{t=1}^k \E{\Big(\frac{j_t}{n}\Big)^{ a_t}\frac{ 1}{d_t!}(a_t\log(n/j_t))^{d_t}},
			\ee 
			which can be evaluated in the same manner as in case~\ref{item:1} to yield the lower bound
			\be
			\prod_{t=1}^k \E{\frac{\theta-1}{\theta-1+W_t}\Big(\frac{W_t}{\theta-1+W_t}\Big)^{d_t}\Pf{X_t<\Big(1+\frac{W_t}{(\theta-1)}\Big)\log(n/(\wt \ell_t+(t-1)))}}(1-o(1)).
			\ee 
			Again, since $\log(n/(\wt \ell_t+(t-1)))=\log(n/\ell_t)-o(1)$ for each $t\in[k]$, we can replace $\wt \ell_t+(t-1)$ with $\ell_t$ for each $t\in[k]$ at the cost of a $1-o(1)$ term. We thus conclude that 
		\be \ba 
		\mathbb P(\Zm_n{}&(v_i)= d_i, v_i>\ell_i,i\in[k])\\
		\geq{}& (1-o(1))\prod_{t=1}^k \Bigg[\E{\frac{\theta-1}{\theta-1+W}\Big(\frac{W}{\theta-1+W}\Big)^{d_t}\Pf{X_t<\Big(1+\frac{W}{\theta-1}\Big)\log(n/\ell_t)}},
		\ea \ee 
		which concludes the proof of the lower bound.
	\end{proof}
	
	We observe that the combination of the upper and lower bound proves~\eqref{eq:degdistr}. What remains is to prove~\eqref{eq:degdistrtail}.
	
	\begin{proof}[Proof of Proposition~\ref{prop:deglocwrt}, Equation~\eqref{eq:degdistrtail}]		
		We prove the two bounds in~\eqref{eq:degdistrtail} by using~\eqref{eq:degdistr}. We assume that $d_i$ diverges with $n$ and we note that, if 
		\be 
		d_i\leq c\log n\quad \text{and}\quad \ell_i\leq n\exp(-(1-\xi)(1-\theta^{-1})(d_i+1)),
		\ee 
		for any $\xi\in(0,1)$ and for all sufficiently large $n$, then for any $j\in[\lfloor d_i^{1/4}\rfloor]$, it also holds that 
		\be 
		d_i +j \leq c'\log n,\quad \text{and}\quad  \ell_i\leq n\exp(-(1-\xi)(1-\theta^{-1})(d_i+j+1)),
		\ee 
		for any $\xi\in(0,1)$ and for all sufficiently large $n$ as well, where we can choose $c'\in(c,\theta/(\theta-1))$ arbitrarily close to $c$. As a result, we can write 
		\be \ba
		\mathbb P(\Zm_n{}&(v_i)\geq d_i, v_i> \ell_i, i\in[k])\\
		\leq & \sum_{j_1=d_1}^{d_1+\lfloor d_1^{1/4}\rfloor}\cdots \sum_{j_k=d_k}^{d_k+\lfloor d_k^{1/4}\rfloor}\P{\Zm_n(v_i)=j_i, v_i> \ell_i, i\in[k]}\\
		&+\sum_{t=1}^k \P{\Zm_n(v_t)\geq d_t+\lceil d_t^{1/4}\rceil ,\Zm_n(v_i)\geq d_i, i\neq t, v_i>\ell_i, i\in[k]}. 
		\ea\ee 
		We first provide an upper bound for the multiple sums on the first line. By~\eqref{eq:degdistr}, this equals
		\be
		\sum_{j_1=d_1}^{d_1+\lfloor d_1^{1/4}\rfloor}\!\!\!\cdots\!\!\! \sum_{j_k=d_k}^{d_k+\lfloor d_k^{1/4}\rfloor}\!\!\!\!(1+o(1))\prod_{i=1}^k \E{\frac{\theta-1}{\theta-1+W}\Big(\frac{W}{\theta-1+W}\Big)^{j_i}\Pf{X_{j_i}<\Big(1+\frac{W}{\theta-1}\Big)\log(n/\ell_i)}},
		\ee
		where we write $X_{j_i}\sim\text{Gamma}(j_i+1,1)$ instead of $X_i$ to explicitly state the dependence on $j_i$. If $X_{j_i}\sim\text{Gamma}(j_i+1,1),X_{j_i'}\sim\text{Gamma}(j_i'+1,1)$, then $X_{j_i}$ stochastically dominates $X_{j_i'}$ when $j_i>j_i'$. Hence, we obtain the upper bound
		\be\ba\label{eq:finub}
		\sum_{j_1=d_1}^\infty\!\!\!{}&\ldots\!\!\!  \sum_{j_k=d_k}^\infty\!(1+o(1))\prod_{i=1}^k \mathbb E\bigg[\frac{\theta-1}{\theta-1+W}\Big(\frac{W}{\theta-1+W}\Big)^{j_i}\!\mathbb P_W\!\bigg(\!\!X_{d_i}\!<\Big(1+\frac{W}{\theta-1}\Big)\log(n/\ell_i)\!\bigg)\bigg]\\
		&=(1+o(1))\prod_{i=1}^k \E{\Big(\frac{W}{\theta-1+W}\Big)^{d_i}\Pf{X_i<\Big(1+\frac{W}{\theta-1}\Big)\log(n/\ell_i)}},
		\ea \ee 
		where we note that $X_i\equiv X_{d_i}$ by the definition of $X_i$ and $X_{d_i}$. It thus remains to show that
		\be \label{eq:negterm}
		\sum_{t=1}^k \P{\Zm_n(v_t)\geq d_t+\lceil d_t^{1/4}\rceil ,\Zm_n(v_i)\geq d_i, i\neq t, v_i>\ell_i, i\in[k]} 
		\ee 
		is negligible compared to~\eqref{eq:finub}. We show this holds for each term in the sum, and since all $d_i,i\in[k]$ diverge, it suffices to show this holds for $t=1$. The in-degrees in the WRT model are negative quadrant dependent under the conditional probability measure $\mathbb P_W$. That is, by~\cite[Lemma $7.1$]{LodOrt21}, for any indices $r_1, \ldots, r_k\in[n]$, $r_i\neq r_j$ when $i\neq j$, 
		\be 
		\Pf{\Zm_n(r_i)\geq d_i, i\in[k]}\leq \prod_{i=1}^k \Pf{\Zm_n(r_i)\geq d_i}.
		\ee 
		We can thus bound the term with $t=1$ in~\eqref{eq:negterm} from above by 
		\be \ba
		\sum_{j_1=\ell_1+1}^n{}&\sum_{\substack{j_2=\ell_2+1\\ j_2\neq j_1}}^n \cdots \sum_{\substack{j_k=\ell_k+1\\j_k\neq j_{k-1},\ldots, j_1}}^n \E{\Pf{\Zm_n(j_1)\geq d_1+\lceil d_1^{1/4}\rceil}\prod_{i=2}^k \Pf{\Zm_n(j_i)\geq d_i}} \\
		&\leq \E{\Pf{\Zm_n(v_1)\geq d_1+\lceil d_1^{1/4}\rceil, v_1>\ell_1}\prod_{i=2}^k \Pf{\Zm_n(v_i)\geq d_i, v_i>\ell_i}}, 
		\ea \ee 
		where the last step follows by allowing the indices $j_i$ to take on any value between $\ell_i+1$ and $n$, $i\in[k]$. We can now deal with each of these probabilities individually instead of with all the events at the same time, which makes obtaining an explicit bound for the probability of the event $\{\Zm_n(v_i)\geq d_i, v_i>\ell_i\}$ easier. We claim that, with a very similar approach compared to the proof of the upper bound for~\eqref{eq:degdistr} (see also steps $(5.47)$ through $(5.51)$ in the proof of~\cite[Lemma $5.11$]{EslLodOrt21} for the case $\ell_1=\ldots \ell_k=n^{1-\eps}$ for some $\eps\in(0,1)$), it can be shown that this expected value is bounded from above by 
		\be \ba
		(1+o(1)){}&\E{\Big(\frac{W}{\theta-1+W}\Big)^{d_1+\lceil d_1^{1/4}\rceil}\Pf{X_1\leq \Big(1+\frac{W}{\theta-1}\Big)\log(n/\ell_1)}}\\
		&\times \prod_{i=2}^k \E{\Big(\frac{W}{\theta-1+W}\Big)^{d_i}\Pf{X_i\leq \Big(1+\frac{W}{\theta-1}\Big)\log(n/\ell_i)}}\\
		\leq{}& (1+o(1))\theta^{-\lceil d_1^{1/4}\rceil}\prod_{i=1}^k \E{\Big(\frac{W}{\theta-1+W}\Big)^{d_i}\Pf{X_i\leq \Big(1+\frac{W}{\theta-1}\Big)\log(n/\ell_i)}}.
		\ea\ee 
		This upper bound  can be achieved for each term in~\eqref{eq:negterm} (with $\lceil d_1^{1/4}\rceil$ changed accordingly), so that~\eqref{eq:negterm} is indeed negligible compared to~\eqref{eq:finub} and hence can be included in the $o(1)$ term in~\eqref{eq:finub}. This proves the upper bound in~\eqref{eq:degdistrtail}.
		
		For a lower bound we directly obtain 
		\be 
		\P{\Zm_n(v_i)\geq d_i, v_i> \ell_i, i\in[k]}\geq \sum_{j_1=d_1}^{d_1+\lfloor d_1^{1/4}\rfloor}\cdots \sum_{j_k=d_k}^{d_k+\lfloor d_k^{1/4}\rfloor}\P{\Zm_n(v_i)=j_i, v_i> \ell_i, i\in[k]}.
		\ee 
		With a similar approach as for the upper bound we can use~\eqref{eq:degdistr} and now bound the probability from below by replacing $X_{j_i}$ with $\wt X_i\equiv X_{d_i+\lfloor d_i^{1/4}\rfloor}$ instead of $X_{d_i}$, to arrive at the lower bound  
		\be \ba
		\sum_{j_1=d_1}^{d_1+\lfloor d_1^{1/4}\rfloor}\!\!\!{}&\cdots\!\!\! \sum_{j_k=d_k}^{d_k+\lfloor d_k^{1/4}\rfloor}\!\!\!\!(1+o(1))\prod_{i=1}^k \E{\frac{\theta-1}{\theta-1+W}\Big(\frac{W}{\theta-1+W}\Big)^{j_i}\Pf{ X_{j_i}<\Big(1+\frac{W}{\theta-1}\Big)\log(n/\ell_i)}}\\
		\geq (1+{}&o(1))\prod_{i=1}^k \E{\Big(\frac{W}{\theta-1+W}\Big)^{d_i}\Big(1-\Big(\frac{W}{\theta-1+W}\Big)^{\lfloor d_i^{1/4}\rfloor}\Big)\Pf{\wt X_i<\Big(1+\frac{W}{\theta-1}\Big)\log(n/\ell_i)}}\\
		\geq(1+{}&o(1))\prod_{i=1}^k\E{\Big(\frac{W}{\theta-1+W}\Big)^{d_i}\Pf{\wt X_i<\Big(1+\frac{W}{\theta-1}\Big)\log(n/\ell_i)}},
		\ea \ee
		where in the last step we use that $1-(W/(\theta-1+W))^{\lfloor d_i^{1/4}\rfloor}\geq 1-\theta^{-\lfloor d_i^{1/4}\rfloor}=1-o(1)$ almost surely, since $d_i$ diverges for any $i\in[k]$. This concludes the proof of the lower bound in~\eqref{eq:degdistrtail} and hence of Proposition~\ref{prop:deglocwrt}.
	\end{proof}	
	
	\section{Extended results for the~\ref{ass:beta} and~\ref{ass:gamma} cases}\label{sec:examples}
	
	In this section we discuss two examples of vertex-weight distributions as provided in Assumption~\ref{ass:weights}, for which results similar to those of Theorems~\ref{thrm:conddegloc},~\ref{thrm:deglocwrt} and Proposition~\ref{prop:momentconvwrt} (where the latter two hold for the~\ref{ass:weightatom} case) can be proved.
	
	\begin{example}[\ref{ass:beta} case]
		We consider a random variable $W$ with a beta distribution, i.e.\ with a tail distribution as in~\eqref{eq:betacdf} for some $\alpha,\beta>0$. We define, for $j\in\Z,B\in\CB(\R)$,
		\be\ba\label{eq:epsnbeta}
		\wt X^{(n)}_j(B)&:=\Big|\Big\{\inn: \zni=\lfloor \log_\theta n-\beta\log_\theta\log_\theta n \rfloor +j, \frac{\log i-\mu\log n}{\sqrt{(1-\sigma^2)\log n}}\in B\Big\}\Big|,\\ 
		\wt X^{(n)}_{\geq j}(B)&:=\Big|\Big\{\inn: \zni\geq \lfloor \log_\theta n-\beta\log_\theta\log_\theta n \rfloor +j, \frac{\log i-\mu\log n}{\sqrt{(1-\sigma^2)\log n}}\in B\Big\}\Big|,\\
		\eps_n&:=(\log_\theta n-\beta \log_\theta\log_\theta n)-\lfloor\log_\theta n-\beta\log_\theta\log_\theta n \rfloor,\\
		c_{\alpha,\beta,\theta}&:=\frac{\Gamma(\alpha+\beta)}{\Gamma(\alpha)}(1-\theta^{-1})^{-\beta}.
		\ea\ee 
		Then, we can formulate the following results.
		
		\begin{theorem}\label{thrm:betappp}
			Consider the WRT model, that is, the WRG model as in Definition~\ref{def:wrg} with $m=1$, with vertex-weights $(W_i)_{i\in\N}$ which are distributed according to~\eqref{eq:betacdf} for some $\alpha,\beta>0$, and recall $\theta=1+\E W$. Let $v^1,v^2,\ldots, v^n$ be the vertices in the tree in decreasing order of their in-degree $($\!where ties are split uniformly at random$)$, let $d_n^i$ and $\ell_n^i$ denote the in-degree and label of $v^i$, respectively, and fix $\eps\in[0,1]$. Recall $\eps_n$ from~\eqref{eq:epsnbeta} and let $(n_j)_{j\in\N}$ be a positive, diverging, integer sequence such that $\eps_{n_j}\to\eps$ as $j\to\infty$. Finally, let $(P_i)_{i\in\N}$ be the points of the Poisson point process $\CP$ on $\R$ with intensity measure $\lambda(x)=c_{\alpha,\beta,\theta}\theta^{-x}\log \theta\,\d x$, ordered in decreasing order, let $(M_i)_{i\in\N}$ be a sequence of i.i.d.\ standard normal random variables and define $\mu:=1-(\theta-1)/(\theta\log \theta), \sigma^2:=1-(\theta-1)^2/(\theta^2\log \theta)$. Then, as $j\to\infty$, 
			\be
			\Big(d_{n_j}^i-\lfloor \log_\theta n_j-\beta \log_\theta \log_\theta n_j\rfloor, \frac{\log(\ell_{n_j}^i)-\mu\log n_j}{\sqrt{(1-\sigma^2)\log n_j}},i\in[n_j]\Big)\toindis (\lfloor P_i+\eps\rfloor , M_i, i\in\N).
			\ee 
		\end{theorem}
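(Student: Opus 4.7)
The plan is to mirror the proof of Theorem~\ref{thrm:deglocwrt} (the analog in the \ref{ass:weightatom} case) essentially verbatim, with one new ingredient: a Laplace-type asymptotic for the beta-distributed moment that replaces the elementary identity $\E{(W/(\theta-1+W))^d}\to q_0\theta^{-d}$. First, I would recast the statement as the weak convergence of the marked point processes $\CM\CP^{(n_j)}\to \CM\CP^\eps$ in $\CM^\#_{\Z^*\times\R}$, defined exactly as in Section~\ref{sec:pppfdd} except with the centring $\lfloor\log_\theta n-\beta\log_\theta\log_\theta n\rfloor$ in place of $\lfloor\log_\theta n\rfloor$. By~\cite[Theorem~11.1.VII]{DalVer08}, this reduces to convergence of finite-dimensional distributions on the semiring $\CA$ from~\eqref{eq:Awrt}, which by the method of moments~\cite[Section~6.1]{JanLucRuc00} reduces in turn to convergence of factorial moments of the counting measures $\wt X^{(n)}_{j_k}(B_k)$ and $\wt X^{(n)}_{\geq j_k}(B_k)$ defined in~\eqref{eq:epsnbeta}.

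Next, I would prove a beta-case analog of Proposition~\ref{prop:momentconvwrt}: for any fixed non-decreasing integer tuple $(j_k)_{k\in[K]}$, disjoint Borel sets $(B_k)_{k\in[K]}$ (under the constraints from Proposition~\ref{prop:momentconvwrt}), and multiplicities $(c_k)_{k\in[K]}\in\N_0^K$,
\[
\E{\prod_{k=1}^{K'}\Big(\wt X_{j_k}^{(n)}(B_k)\Big)_{c_k}\prod_{k=K'+1}^K \Big(\wt X_{\geq j_k}^{(n)}(B_k)\Big)_{c_k}}\to \prod_{k=1}^{K'}\Big(c_{\alpha,\beta,\theta}(1-\theta^{-1})\theta^{-j_k+\eps}\Phi(B_k)\Big)^{c_k}\prod_{k=K'+1}^K\Big(c_{\alpha,\beta,\theta}\theta^{-j_K+\eps}\Phi(B_k)\Big)^{c_k}.
\]
The inclusion-exclusion identity~\eqref{eq:inex} on the events $\CD_{\bar d}(M',M)$ and $\CE_{\bar d}(S)$ applies unchanged, and Proposition~\ref{prop:deglocwrt} still reduces each $\P{\CE_{\bar d}(S)\cap\CL_{\bar A,\bar d}}$ to a product of expressions of the form $\E{\frac{\theta-1}{\theta-1+W}(W/(\theta-1+W))^{d_i}\Pf{X_i<(1+W/(\theta-1))\log(n/\ell_i)}}$, where now $d_i=\lfloor\log_\theta n-\beta\log_\theta\log_\theta n\rfloor+j_i+\ind_{\{i\in S\}}$.

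The genuinely new work is the asymptotic evaluation of this expectation. Setting $u:=1-W$, the expansion $\log(W/(\theta-1+W))=-\log\theta-u(\theta-1)/\theta+O(u^2)$ together with the near-$1$ form of the beta density, proportional to $u^{\beta-1}(1+O(u))$, localises the integral at scales $u=O(1/\log n)$. On that scale the Gamma-tail probability $\Pf{X_i<(1+W/(\theta-1))\log(n/\ell_i)}$ is asymptotically $\Phi(x_i)$ by a central-limit argument for the Gamma distribution, since $\log n-(1-\theta^{-1})d_i\sim \mu\log n$ and $(1-\theta^{-1})^2 d_i\sim (1-\sigma^2)\log n$, so the rescaled label variable from $\CL_{\bar A,\bar d}$ matches the Gaussian centring used in~\eqref{eq:epsnbeta}. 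A change of variable $v:=du(\theta-1)/\theta$ and extension of the range to $(0,\infty)$ then yields $\theta^{-d}Z_{w^*}(\Gamma(\alpha+\beta)/\Gamma(\alpha))(d(\theta-1)/\theta)^{-\beta}\Phi(x_i)(1-\theta^{-1})(1+o(1))$, which simplifies to $n^{-1}c_{\alpha,\beta,\theta}\theta^{-j_i+\eps_n}\Phi(x_i)(1-\theta^{-1})(1+o(1))$ using $\theta^{-d}=n^{-1}(\log_\theta n)^\beta\theta^{-j_i+\eps_n}(1+o(1))$. Substituting into the analogs of~\eqref{eq:bigprod}--\eqref{eq:fin} produces the claimed factorial moments (with the factor $1-\theta^{-1}$ absorbed by a geometric summation over $j\geq j_K$ for the ``$\geq$''-type counting measures), and the method of moments concludes.

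The main obstacle will be establishing this Laplace-type asymptotic with enough uniformity in $j$, $\ell$ (equivalently $x$), and $S$, while simultaneously controlling the joint $W$-dependence of the Gamma-tail probability (through both the scale $1+W/(\theta-1)$ and the endpoint $\log(n/\ell)$) and the beta-density singularity at $W=1$. This is precisely why the off-the-shelf Lemma~\ref{lemma:expasymp} in the~\hyperref[sec:appendix]{Appendix}, which handles only atoms at $1$, does not apply directly. The approach I would take is to truncate to $u\leq C(\log\log n)/\log n$, where the Gamma CLT is valid uniformly in $u$ and $(1-u)^{\alpha-1}=1+o(1)$, and to show that the complementary region $u\in(C(\log\log n)/\log n,1-w^*)$ contributes negligibly by combining the crude bound $(W/(\theta-1+W))^d\leq \theta^{-d}$ with the integrated beta density on that set.
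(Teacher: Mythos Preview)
Your proposal is correct and follows essentially the same route as the paper: reduce to weak convergence of the marked point process, then to factorial-moment convergence via the inclusion--exclusion scheme of Proposition~\ref{prop:momentconvwrt}, and feed in a Laplace-type asymptotic for the beta moment. The paper states exactly this, deferring to Proposition~\ref{prop:betamoment} and noting its proof mimics that of Proposition~\ref{prop:momentconvwrt}.

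One correction, though: you assert that Lemma~\ref{lemma:expasymp} ``handles only atoms at $1$'' and therefore does not apply. This is a misreading. Lemma~\ref{lemma:expasymp} explicitly treats the beta family~\eqref{eq:betacdf} and delivers~\eqref{eq:asympbeta}, which is precisely the asymptotic $\P{\Zm_n(v_1)\geq d,\,v_1>\ell}=Z_{w^*}\Gamma(\alpha+\beta)\Gamma(\alpha)^{-1}(1-\theta^{-1})^{-\beta}d^{-\beta}\theta^{-d}(1-\Phi(x))(1+o(1))$ you are proposing to derive. The truncation argument you sketch (localise near $u=1-W$ small, use the Gamma CLT on that window, bound the complement crudely) is in substance the proof of~\eqref{eq:asympbeta} already given in the Appendix, with truncation scale $t_d=d^{3/4}$ in place of your $C(\log\log n)/\log n$; both scales work. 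So your ``main obstacle'' is already dispatched by the paper's own lemma, and the remaining steps---substituting $d_i=\lfloor\log_\theta n-\beta\log_\theta\log_\theta n\rfloor+j_i+\ind_{\{i\in S\}}$, cancelling $d_i^{-\beta}$ against $(\log_\theta n)^\beta$ from $\theta^{-d_i}$, and summing the inclusion--exclusion---go through exactly as in~\eqref{eq:bigprod}--\eqref{eq:fin}.
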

		
		\begin{proposition}\label{prop:betamoment}
			Consider the WRT model, that is, the WRG model as in Definition~\ref{def:wrg} with $m=1$, with vertex-weights $(W_i)_{\inn}$ which are distributed according to~\eqref{eq:betacdf} for some $\alpha,\beta>0$. Recall that $\theta:=1+\E W$ and that $(x)_k:=x(x-1)\cdots (x-(k-1))$ for $x\in\R,k\in\N$, and $(x)_0:=1$. Fix $K\in\N$, let $(j_k)_{k\in[K]}$ be a fixed non-decreasing sequence with $0\leq K':=\min\{k: j_{k+1}=j_K\}$, let $(B_k)_{k\in[K]}$ be a sequence of sets $B_k\in \CB(\R)$ such that $B_k\cap B_\ell=\emptyset $ when $j_k=j_\ell$ and $k\neq \ell$, and let $(c_k)_{k\in[K]}\in \N_0^K$. Recall the random variables $\wt X^{(n)}_j(B), \wt X_{\geq j}^{(n)}(B)$ and $\eps_n, c_{\alpha,\beta,\theta}$ from~\eqref{eq:epsnbeta}. Then, 
			\be \ba
			\E{\prod_{k=1}^{K'}\Big(\wt X_{j_k}^{(n)}(B_k)\Big)_{c_k}\prod_{k=K'+1}^K \Big(\wt X_{\geq j_k}^{(n)}(B_k)\Big)_{c_k}}={}&(1+o(1))\prod_{k=1}^{K'}\Big(\frac{c_{\alpha,\beta,\theta}(1-\theta^{-1})}{(1+\delta)^\beta}\theta^{-k+\eps_n}\Phi(B_k)\Big)^{c_k}\\
			&\times \prod_{k=K'+1}^{K}\Big(\frac{c_{\alpha,\beta,\theta}}{(1+\delta)^\beta}\theta^{-k+\eps_n}\Phi(B_k)\Big)^{c_k}.
			\ea\ee 
		\end{proposition}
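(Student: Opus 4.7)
The proof follows the blueprint of Proposition~\ref{prop:momentconvwrt}, replacing the atom-case asymptotic (used there via Lemma~\ref{lemma:expasymp}) with its Beta-tail analogue. The plan proceeds in three stages.

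First, I set up the same combinatorial framework. Given $K$, the non-decreasing sequence $(j_k)_{k\in[K]}$, $(B_k)_{k\in[K]}$ and multiplicities $(c_k)_{k\in[K]}\in\N_0^K$, set $M=\sum_k c_k$, $M'=\sum_{k\le K'}c_k$, and define $\bar d=(d_i)_{i\in[M]}$ and $\bar A=(A_i)_{i\in[M]}$ by unrolling the block structure: the $i$-th coordinate carries $d_i=\lfloor\log_\theta n-\beta\log_\theta\log_\theta n\rfloor+j_{k(i)}$ and $A_i=B_{k(i)}$, where $k(i)$ is the block containing the index $i$. With $M$ vertices $(v_i)_{i\in[M]}$ sampled uniformly without replacement from $[n]$, I introduce the events $\CL_{\bar A,\bar d}$, $\CD_{\bar d}(M',M)$, $\CE_{\bar d}(S)$ as in the atom-case proof. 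Then
\begin{equation}
\E{\prod_{k=1}^{K'}(\wt X_{j_k}^{(n)}(B_k))_{c_k}\prod_{k=K'+1}^{K}(\wt X_{\ge j_k}^{(n)}(B_k))_{c_k}}=(n)_M\P{\CD_{\bar d}(M',M)\cap\CL_{\bar A,\bar d}},
\end{equation}
and the inclusion--exclusion identity~\eqref{eq:inex} writes this probability as an alternating sum over $S\subseteq[M']$ of $\P{\CE_{\bar d}(S)\cap\CL_{\bar A,\bar d}}$.

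The second and main step is a sharp asymptotic for each of these probabilities. Proposition~\ref{prop:deglocwrt} reduces $\P{\CE_{\bar d}(S)\cap\CL_{\bar A,\bar d}}$, up to a $(1+o(1))$ factor, to a product over $i\in[M]$ of $\E{(W/(\theta-1+W))^{d_i+\mathbbm{1}_{i\in S}}\Pf{X_i<(1+W/(\theta-1))\log(n/\ell_i)}}$ with $\ell_i\in\ell_{d_i}(A_i)$ and $X_i\sim\Gamma(d_i+\mathbbm{1}_{i\in S}+1,1)$. Writing $(W/(\theta-1+W))^{d_i}=\theta^{-d_i}\exp(-d_i(1-\theta^{-1})(1-W)(1+o(1)))$ for $W$ close to one, substituting $t=d_i(1-\theta^{-1})(1-W)$, and invoking the Beta-tail asymptotic $\P{W\ge 1-\epsilon}\sim Z_{w^*}\Gamma(\alpha+\beta)\Gamma(\alpha)^{-1}\beta^{-1}\epsilon^\beta$ as $\epsilon\downarrow 0$, the expectation over $W$ reduces to a $\Gamma(\beta)$-type integral of the form $\E{(W/(\theta-1+W))^{d_i}}\sim c_{\alpha,\beta,\theta}\Gamma(\beta)\theta^{-d_i}d_i^{-\beta}$ up to the precise constant that must match the one in the statement. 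Plugging in $d_i=\lfloor\log_\theta n-\beta\log_\theta\log_\theta n\rfloor+j_{k(i)}$ gives $\theta^{-d_i}\sim n^{-1}(\log_\theta n)^\beta\theta^{-j_{k(i)}+\epsilon_n}$ and $d_i^\beta\sim(\log_\theta n)^\beta$, so the polylogarithmic factors cancel and only $n^{-1}\theta^{-j_{k(i)}+\epsilon_n}$ multiplied by a constant remains. The conditional probability involving $X_i$ converges, by the Gaussian CLT for Gamma laws and the scaling built into $\ell_{d_i}(\cdot)$, uniformly in $W$ to $\Phi(A_i)=\Phi(B_{k(i)})$.

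Finally, inserting this asymptotic into the inclusion--exclusion and using $\sum_{j=0}^{M'}\binom{M'}{j}(-\theta^{-1})^j=(1-\theta^{-1})^{M'}$ collapses the sum over $S$ to the factor $(1-\theta^{-1})^{M'}$, which distributes as $(1-\theta^{-1})^{c_k}$ across the blocks $k\le K'$. Multiplying by $(n)_M\sim n^M$ cancels the $n^{-M}$, and regrouping the $M$ coordinates back into the $K$ blocks of size $c_k$ assembles the claimed formula. The principal obstacle is the Laplace-method step: one must show that the dominant contribution to $\E{(W/(\theta-1+W))^{d_i}}$ concentrates on weights within $O(1/d_i)$ of one, control the error term uniformly in $\ell_i\in\ell_{d_i}(A_i)$ and uniformly in the $(1+\mathbbm{1}_{i\in S})$ shift of $d_i$, and verify that the chosen centring $\lfloor\log_\theta n-\beta\log_\theta\log_\theta n\rfloor$ precisely cancels the polynomial $d_i^{-\beta}$ factor so that the exponential scale $\theta^{-j_k+\epsilon_n}$ is recovered. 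This is the Beta-case analogue of Lemma~\ref{lemma:expasymp}, and proving it is the technical heart of the argument.
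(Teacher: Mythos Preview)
Your approach is correct and essentially matches the paper's: the proof is obtained by rerunning the argument of Proposition~\ref{prop:momentconvwrt} with the Beta-case asymptotic~\eqref{eq:asympbeta} of Lemma~\ref{lemma:expasymp} in place of the atom-case asymptotic~\eqref{eq:asympatom}. One point worth noting: the Laplace-method computation you flag as the ``technical heart'' is already carried out in the paper as part of Lemma~\ref{lemma:expasymp} (equation~\eqref{eq:asympbeta}, relying on \cite[Lemma~7.1]{EslLodOrt21} for the moment asymptotic), so you need not rederive it---you can simply invoke that result and proceed with the inclusion--exclusion and regrouping exactly as you describe.
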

		
		\begin{remark}
			A more general result as in Proposition~\ref{prop:momentconvwrt} holds in this particular example as well. However, as only the factorial moments of $\wt X_j^{(n)}(B),\wt X_{\geq j}^{(n)}(B)$ are of interest for Theorem~\ref{thrm:betappp}, these more general results are omitted here.
		\end{remark}
		
		We note that the beta distribution satisfies Conditions~\ref{item:c1},~\ref{item:c2}, and~\ref{item:c3} of Assumption~\ref{ass:weights}, so that this case is already captured by Theorem~\ref{thrm:conddegloc}. We hence do not need to state an analogue of this theorem here.
		
		Theorem~\ref{thrm:betappp} and Proposition~\ref{prop:betamoment} are the analogue of Theorem~\ref{thrm:deglocwrt} and Proposition~\ref{prop:momentconvwrt}. As the proof of the theorem presented here is very similar to the proof of Theorem~\ref{thrm:deglocwrt} (namely using Proposition~\ref{prop:betamoment} with a subsequence $n_j$ such that $\eps_{n_j}$, as in~\eqref{eq:epsnbeta}, converges to some $\eps\in[0,1]$, combined with the method of moments), we omit it here. The proof of the proposition is very similar to the proof of Proposition~\ref{prop:momentconvwrt} when using~\eqref{eq:asympbeta} from Corollary~\ref{cor:expasymp} in the~\hyperref[sec:appendix]{Appendix}, and is omitted, too.
	\end{example}
	
	\begin{example}[\ref{ass:gamma} case]
		We consider a random variable $W$ with a tail distribution as in~\eqref{eq:gumbex} for some $b\in \R,c_1>0,\tau\geq 1$ such that $b\leq 0$ when $\tau>1 $ and $bc_1\leq 1$ when $\tau=1$ (this condition is to ensure that the probability density function is non-negative on $[0,1)$). We define,
		\be\ba \label{eq:c}
		C_{\theta,c_1}&:=\frac{2}{\log \theta}\sqrt{\frac{1-\theta^{-1}}{c_1}},\qquad &C:=\e^{c_1^{-1}(1-\theta^{-1})/2}\sqrt{\pi}c_1^{-1/4+b/2}(1-\theta^{-1})^{1/4+b/2},&\\
		c_{\theta,c_1}&:=C\theta^{C_{\theta,c_1}^2/2},\qquad &K_{\theta,c_1,\tau}:=\frac1\theta \Big(\frac{\tau}{c_1^\tau(1-\theta^{-1})}\Big)^\gamma.&
		\ea \ee
		and, for $j\in\Z,B\in\CB(\R)$,
		\be\ba\label{eq:epsngamma}
		\wt X^{(n)}_j(B):={}&\Big|\Big\{\inn: \zni=\big\lfloor \log_\theta n-C_{\theta,c_1}\sqrt{\log_\theta n}+(b/2+1/4) \log_\theta \log_\theta n \big\rfloor +j, \\
		&\hphantom{\Big|\Big\{\inn:}\ \frac{\log i-\mu\log n}{\sqrt{(1-\sigma^2)\log n}}\in B\Big\}\Big|,\\ 
		\wt X^{(n)}_{\geq j}(B):={}&\Big|\Big\{\inn: \zni\geq \big\lfloor \log_\theta n-C_{\theta,c_1}\sqrt{\log_\theta n}+(b/2+1/4) \log_\theta \log_\theta n \big\rfloor +j,\\
		&\hphantom{\Big|\Big\{\inn:}\ \frac{\log i-\mu\log n}{\sqrt{(1-\sigma^2)\log n}}\in B\Big\}\Big|,\\
		\eps_n:={}&\big(\log_\theta n-C_{\theta,c_1}\sqrt{\log_\theta n}+(b/2+1/4) \log_\theta \log_\theta n\big )\\
		&-\big\lfloor\log_\theta n-C_{\theta,c_1}\sqrt{\log_\theta n}+(b/2+1/4) \log_\theta \log_\theta n \big \rfloor.
		\ea\ee 
		Then, we can formulate the following results.
		
		\begin{theorem}\label{thrm:gammacond}
			Consider the WRT model, that is, the WRG model as in Definition~\ref{def:wrg} with $m=1$, with vertex-weights $(W_i)_{i\in\N}$ which are distributed according to~\eqref{eq:gumbex} for some $b\in\R,c_1>0,\tau\geq 1$ such that $b\leq 0$ when $\tau>1$ and  $bc_1\leq 1$ when $\tau=1$, and let $\gamma:=1/(\tau+1)$. Fix $k\in\N, c\in(0,\theta/(\theta-1))$, let $(d_i)_{i\in[k]}$ be $k$ integer-valued sequences that diverge as $n\to\infty$ such that $d_i\leq c\log n$ for all $i\in[k]$ and let $(v_i)_{i\in[k]}$ be $k$ distinct vertices selected uniformly at random without replacement from $[n]$. For $\tau\in[1,2)$, the tuple
				\be 
				\Big(\frac{\log v_i -(\log n-(1-\theta^{-1})(d_i+K_{\theta,c_1,\tau}d_i^{1-\gamma}))}{\sqrt{(1-\theta^{-1})^2d_i }}\Big)_{i\in[k]},
				\ee  
				conditionally on the event $\Zm_n(v_i)\geq d_i$ for all $i\in[k]$, converges in distribution to $(M_i)_{i\in[k]}$, where the $M_i$ are i.i.d.\ standard normal random variables, and with $K_{\theta,c_1,\tau}$ as in~\eqref{eq:c}.
		\end{theorem}
		 
			\begin{remark}
				$(i)$ We see here that the behaviour of the labels of high-degree vertices is different compared to Theorem~\ref{thrm:conddegloc}, where the second-order term $K_{\theta,c_1,\tau}d_i^{1-\gamma}$ is not present. This is due to the exponential decay of the vertex-weight tail distribution near one, which does not satisfy Condition~\ref{item:c2}, as discussed in Remark~\ref{rem:ass}$(i)$ and $(iii)$, as well as in the heuristic arguments in Section~\ref{sec:heur}.
				
				$(ii)$ The statement of the theorem is different to that of Theorem~\ref{thrm:conddegloc}, as there is no need to distinguish between two cases. This is due to the fact that the distribution in~\eqref{eq:gumbex} satisfies Condition~\ref{item:c3} and so the two cases can be presented as one. 
				
				$(iii)$ When $\tau=1$, we observe that $d_i^{1-\gamma}=\sqrt{d_i}$ so that the tuples contain a constant term. Hence, the statement in Theorem~\ref{thrm:gammacond} for $\tau=1$ is equivalent to saying that the tuple
				\be 
				\Big(\frac{\log v_i -(\log n-(1-\theta^{-1})d_i)}{\sqrt{(1-\theta^{-1})^2d_i }}\Big)_{i\in[k]},
				\ee  
				conditionally on the event $\Zm_n(v_i)\geq d_i$ for all $i\in[k]$, converges in distribution to $(M_i')_{i\in[k]}$, where the $M_i'$ are i.i.d.\ $\cN(-K_{\theta,c_1,1},1)$ random variables.
				
				$(iv)$ When $\tau\geq 2$ we expect more higher-order terms to appear, which require a proof with even more precise and technical estimates and hence are not included here.
			\end{remark} 
		
		In the case that $\tau=1$, we have a precise asymptotic expression for $p_{\geq d}$ from Theorem~\ref{thrm:pkasymp}. This enables us to derive the following more detailed results:
		
		\begin{theorem}\label{thrm:gammappp}
			Consider the WRT model, that is, the WRG model in Definition~\ref{def:wrg} with $m=1$, with vertex-weights $(W_i)_{\inn}$ which are distributed according to~\eqref{eq:gumbex} for $\tau=1$ and some $b\in\R,c_1>0$ such that $bc_1\leq 1$ and recall $\theta=1+\E W$ and $C_{\theta,c_1},c_{\theta,c_1},$ and $K_{\theta,c_1,1}$ from~\eqref{eq:c}. Let $v^1,v^2,\ldots, v^n$ be the vertices in the tree in decreasing order of their in-degree $($\!where ties are split uniformly at random$)$, let $d_n^i$ and $\ell_n^i$ denote the in-degree and label of $v^i$, respectively, and fix $\eps\in[0,1]$. Recall $\eps_n$ from~\eqref{eq:epsngamma} and let $(n_j)_{j\in\N}$ be a positive, diverging, integer sequence such that $\eps_{n_j}\to\eps$ as $j\to\infty$. Finally, let $(P_i)_{i\in\N}$ be the points of the Poisson point process $\CP$ on $\R$ with intensity measure $\lambda(x)=c_{\theta,c_1}\theta^{-x}\log \theta\, \d x$, ordered in decreasing order, let $(M_{i,\theta,c_1})_{i\in\N}$ be a sequence of i.i.d.\ $\CN(-K_{\theta,c_1,1},1)$ random variables and define $\mu:=1-(\theta-1)/(\theta\log \theta)$, $ \sigma^2:=1-(\theta-1)^2/(\theta^2\log \theta)$. Then, as $j\to\infty$, 
			\be\ba
			\Big({}&d_{n_j}^i-\Big\lfloor \log_\theta n_j-C_{\theta,c_1}\sqrt{\log_\theta n}+ \Big(\frac b2+\frac 14\Big)\log_\theta \log_\theta n_j\Big\rfloor, \frac{\log(\ell_{n_j}^i)-\mu\log n_j}{\sqrt{(1-\sigma^2)\log n_j}},i\in[n_j]\Big)\\
			&\toindis (\lfloor P_i+\eps\rfloor , M_{i,\theta,c_1}, i\in\N).
			\ea \ee  
		\end{theorem}
		
		\begin{proposition}\label{prop:gammamoment}
			Consider the WRT model, that is, the WRG model as in Definition~\ref{def:wrg} with $m=1$, with vertex-weights $(W_i)_{\inn}$ which are distributed according to~\eqref{eq:gumbex} for some $b\in\R,c_1>0$ such that $bc_1\leq 1$. Recall that $\theta:=1+\E W$ and that $(x)_k:=x(x-1)\cdots (x-(k-1))$  for $x\in\R,k\in\N$, and $(x)_0:=1$. Fix $K\in\N$, let $(j_k)_{k\in[K]}$ be  a fixed non-decreasing sequence with $0\leq K':=\min\{k: j_{k+1}=j_K\}$, let $(B_k)_{k\in[K]}$ be a sequence of sets $B_k\in\CB(R)$ such that $B_k\cap B_\ell=\emptyset $ when $j_k=j_\ell$ and $k\neq \ell$, and let $(c_k)_{k\in[K]}\in \N_0^K$. Recall the random variables $\wt X^{(n)}_j(B),\wt X_{\geq j}^{(n)}(B)$ and the sequence $\eps_n$ from~\eqref{eq:epsngamma},  $c_{\theta,c_1}$ and $C_{\theta,c_1}$ from~\eqref{eq:c}, and let $\Phi_{\theta,c_1}$ denote the cumulative distribution function of $\CN(-1/\sqrt{c_1\theta(\theta-1)},1)$. Then, 
			\be \ba
			\mathbb E\Bigg[ \prod_{k=1}^{K'}\!\Big(\wt X_{j_k}^{(n)}(B_k)\Big)_{c_k}\!\prod_{k=K'+1}^K\!\! \Big(\wt X_{\geq j_k}^{(n)}(B_k)\Big)_{c_k}\!\Bigg]={}&(1+o(1))\prod_{k=1}^{K'}\Big(c_{\theta,c_1}(1-\theta^{-1})\theta^{-k+\eps_n}\Phi_{\theta,c_1}(B_k)\Big)^{c_k}\\
			&\times \prod_{k=K'+1}^{K}\Big(c_{\theta,c_1}\theta^{-k+\eps_n}\Phi_{\theta,c_1}(B_k)\Big)^{c_k}.
			\ea\ee 
		\end{proposition}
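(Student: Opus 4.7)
The plan is to proceed along exactly the same lines as the proof of Proposition~\ref{prop:momentconvwrt}, with only the specific asymptotic estimates adapted to the vertex-weight distribution~\eqref{eq:gumbex}. First, fix $(c_k)_{k\in[K]}\in \N_0^K$, set $M:=\sum_{k=1}^K c_k$ and $M':=\sum_{k=1}^{K'}c_k$, and assemble the tuple $\bar d=(d_i)_{i\in[M]}$ and the tuple $\bar A=(A_i)_{i\in[M]}$ exactly as in the proof of Proposition~\ref{prop:momentconvwrt} but with the new centring, namely $d_i := \lfloor \log_\theta n - C_{\theta,1,c_1}\sqrt{\log_\theta n} + (b/2+1/4)\log_\theta\log_\theta n\rfloor + j_k$ for $i$ in the appropriate range, and $A_i:=B_k$. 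Next, let $(v_i)_{i\in[M]}$ be $M$ vertices chosen uniformly at random without replacement from $[n]$, define the events $\CL_{\bar A,\bar d}, \CD_{\bar d}(M',M), \CE_{\bar d}(S)$ exactly as before, and apply the inclusion-exclusion identity~\eqref{eq:inex} to reduce the problem to estimating the joint probabilities $\P{\CE_{\bar d}(S) \cap \CL_{\bar A,\bar d}}$ for $S\subseteq[M']$.

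The second step is to feed these joint events into Proposition~\ref{prop:deglocwrt}: the tail bounds of~\eqref{eq:degdistrtail} give matching upper and lower bounds for $\P{\CE_{\bar d}(S) \cap \CL_{\bar A,\bar d}}$ up to a $(1+o(1))$ factor, in terms of a product over $i\in[M]$ of the expectations
\[
\E{\Big(\tfrac{W}{\theta-1+W}\Big)^{d_i+\ind_{\{i\in S\}}} \Pf{X_{i}<\Big(1+\tfrac{W}{\theta-1}\Big)\log(n/\ell_{d_i+\ind_{\{i\in S\}}}(A_i))}},
\]
with $\ell_d(x)$ the scale function already used in the proof of Proposition~\ref{prop:momentconvwrt}. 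This is where the specific weight distribution enters. For the law~\eqref{eq:gumbex} one performs a Laplace/saddle-point analysis of this expectation on the event $\{W\approx 1-1/(c_1 d_n\theta/(\theta-1))\}$ where the integrand concentrates; the factor $(W/(\theta-1+W))^{d_n}$ is well approximated by $\theta^{-d_n}\exp(-d_n(\theta-1)(1-W)/\theta + O(d_n(1-W)^2))$, and combining this with the density of $W$ near $1$ produces the Gaussian factor that drives the shift of the mean to $-1/\sqrt{c_1\theta(\theta-1)}$. The $o(1)$ error in Proposition~\ref{prop:deglocwrt} is absorbed because the normalising integral yields $c_{c_1,b,\theta}\theta^{-j_k+\eps_n}\Phi_{\theta,c_1}(B_k)$ up to that order, where the constant $c_{c_1,b,\theta}$ in~\eqref{eq:c} is produced by the Gaussian normalisation together with the $\theta^{C_{\theta,1,c_1}^2/2}$ rescaling coming from the $\sqrt{\log_\theta n}$ centring.

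The precise computation of this expectation is fortunately not new work: it is exactly the content of the asymptotic estimate~\eqref{eq:degdistr}-variant for the~\ref{ass:weighttaufin} sub-case supplied by Lemma~\ref{lemma:expasymp} in the Appendix (together with the Laplace analysis performed for~\cite[Theorem $4.6$, $4.7$]{EslLodOrt21}). Once this estimate is in hand, the $1+o(1)$ correction is uniform in $S$ and $j$, so that the inclusion-exclusion alternating sum collapses, exactly as in~\eqref{eq:condlimit}, to
\[
(1+o(1))\,c_{c_1,b,\theta}^M\,\theta^{M(\eps_n - C_{\theta,1,c_1}\sqrt{\log_\theta n} + (b/2+1/4)\log_\theta\log_\theta n)/1 - \sum_{i=1}^M j_i}\,(1-\theta^{-1})^{M'}\prod_{i=1}^M \Phi_{\theta,c_1}(A_i),
\]
and multiplying by $(n)_M = (1+o(1))n^M$ cancels the awkward $\log_\theta n$ terms inside $\theta^{M\log_\theta n}$ exactly against the power-of-$n$ contribution, as in~\eqref{eq:bigprod}-\eqref{eq:mult}. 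Regrouping the indices into the $K$ classes via the relations of~\eqref{eq:mult}, one recovers the stated product expression.

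The main obstacle, and where the most care is required, is the Laplace evaluation that produces the shifted normal cumulative $\Phi_{\theta,c_1}$ and the constant $c_{c_1,b,\theta}$: one must carry the subleading $\sqrt{\log_\theta n}$ and $\log_\theta\log_\theta n$ corrections correctly through the expectation, since the centring of the degree was \emph{chosen} to produce a nontrivial Gaussian mark rather than trivialise it, and all three-layer logarithmic error terms must end up consolidated into the single $(1+o(1))$. Once this estimate is established cleanly (Lemma~\ref{lemma:expasymp} is designed to provide exactly this), the remainder of the argument is essentially a transcription of the proof of Proposition~\ref{prop:momentconvwrt}, with $q_0$ replaced by $c_{c_1,b,\theta}$ and the standard normal $\Phi$ replaced by $\Phi_{\theta,c_1}$.
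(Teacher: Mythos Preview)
Your proposal is correct and follows essentially the same approach the paper indicates: the paper in fact omits the proof of Proposition~\ref{prop:gammamoment} entirely, stating only that it is ``very similar to the proof of Proposition~\ref{prop:momentconvwrt} when using~\eqref{eq:asympgamma} from Lemma~\ref{lemma:expasymp}'' together with parts of the proof of~\cite[Proposition $7.4$]{EslLodOrt21}, which is precisely the template you outline (inclusion--exclusion, Proposition~\ref{prop:deglocwrt}, then the gamma-case asymptotic of Lemma~\ref{lemma:expasymp} in place of the atom-case one, with $q_0$ and $\Phi$ replaced by $c_{c_1,b,\theta}$ and $\Phi_{\theta,c_1}$). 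Your display for the collapsed sum has a stray ``$/1$'' in the exponent, but the intended bookkeeping is the right one.
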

		
		\begin{remark}
			A more general result as in Proposition~\ref{prop:momentconvwrt} holds in this particular example as well. However, as only the factorial moments of $\wt X_j^{(n)}(B),\wt X_{\geq j}^{(n)}(B)$ are of interest for Theorem~\ref{thrm:gammappp}, these more general results are omitted here.
		\end{remark} 
		
		We observe that the behaviour of the labels of high-degree vertices in the above results is different e.g.\  Theorem~\ref{thrm:conddegloc}. Since the higher-order terms of the asymptotic expression of the degree are of the same order as the second-order rescaling of the label of the high-degree vertices, this causes a correlation between the higher-order behaviour of the degree and the location, so that more complex behaviour is observed.
		
		Theorems~\ref{thrm:gammacond} and~\ref{thrm:gammappp} and Proposition~\ref{prop:gammamoment} are the analogue of Theorems~\ref{thrm:conddegloc} and~\ref{thrm:deglocwrt} and Proposition~\ref{prop:momentconvwrt}, respectively. As proof of the theorems presented here are very similar to the proofs of Theorems~\ref{thrm:conddegloc} and~\ref{thrm:deglocwrt} (namely using~\eqref{eq:asympgamma2} rather than~\eqref{eq:genasymp} in the proof of Theorem~\ref{thrm:conddegloc} to prove Theorem~\ref{thrm:gammacond}, and using Proposition~\ref{prop:gammamoment} with a subsequence $n_j$ such that $\eps_{n_j}$, as in~\eqref{eq:epsngamma}, converges to some $\eps\in[0,1]$, combined with the method of moments to prove Theorem~\ref{thrm:gammappp}), we omit them here. The proof of the proposition is very similar to the proof of Proposition~\ref{prop:momentconvwrt} when using~\eqref{eq:asympgamma2} from Lemma~\ref{lemma:expasymp} in the~\hyperref[sec:appendix]{Appendix}, and is omitted, too.
	\end{example}
	
	\textbf{Acknowledgements}\\
	Bas Lodewijks has been supported by grant GrHyDy ANR-20-CE40-0002. He would also like to thank the anonymous referees for providing comments and suggestions that helped to substantially improve the presentation of the article as well as generalise some of the results.
	
	\bibliographystyle{abbrv}
	\bibliography{wrtbib}

\providecommand{\vander}{van der }
\begin{thebibliography}{10}

\bibitem{AddEsl18}
L.~Addario-Berry and L.~Eslava.
\newblock High degrees in random recursive trees.
\newblock {\em Random Structures \& Algorithms}, 52(4):560--575, 2018.

\bibitem{BanBha20}
S.~Banerjee and S.~Bhamidi.
\newblock Persistence of hubs in growing random networks.
\newblock {\em Probability Theory and Related Fields}, pages 1--63, 2021.

\bibitem{BorVat06}
K.~A. Borovkov and V.~A. Vatutin.
\newblock On the asymptotic behaviour of random recursive trees in random
  environments.
\newblock {\em Advances in applied probability}, 38(4):1047--1070, 2006.

\bibitem{BorVat206}
K.~A. Borovkov and V.~A. Vatutin.
\newblock Trees with product-form random weights.
\newblock {\em Discrete Mathematics \& Theoretical Computer Science}, 2006.

\bibitem{DalVer08}
D.~J. Daley and D.~Vere-Jones.
\newblock {\em An introduction to the theory of point processes. {V}ol. {II}}.
\newblock Probability and its Applications (New York). Springer, New York,
  second edition, 2008.
\newblock General theory and structure.

\bibitem{DevLu95}
L.~Devroye and J.~Lu.
\newblock The strong convergence of maximal degrees in uniform random recursive
  trees and dags.
\newblock {\em Random Structures \& Algorithms}, 7(1):1--14, 1995.

\bibitem{EslLodOrt21}
L.~Eslava, B.~Lodewijks, and M.~Ortgiese.
\newblock Fine asymptotics for the maximum degree in weighted recursive trees
  with bounded random weights.
\newblock {\em Stochastic Processes and their Applications}, 158:505--569,
  2023.

\bibitem{FouIyer21}
N.~Fountoulakis and T.~Iyer.
\newblock Condensation phenomena in preferential attachment trees with
  neighbourhood influence.
\newblock {\em Electronic Journal of Probability}, 27:1--49, 2022.

\bibitem{Iyer20}
T.~Iyer.
\newblock Degree distributions in recursive trees with fitnesses.
\newblock {\em Advances in Applied Probability}, 55(2):407–443, 2023.

\bibitem{JanLucRuc00}
S.~Janson, T.~Luczak, and A.~Rucinski.
\newblock {\em Random graphs}.
\newblock Wiley-{I}nterscience {S}eries, New York, 2000.

\bibitem{Lod22}
B.~Lodewijks.
\newblock {On joint properties of vertices with a given degree or label in the
  random recursive tree}.
\newblock {\em Electronic Journal of Probability}, 27(none):1 -- 45, 2022.

\bibitem{LodOrt21}
B.~Lodewijks and M.~Ortgiese.
\newblock The maximal degree in random recursive graphs with random weights.
\newblock {\em Preprint arXiv:2007.05438}, 2020.

\bibitem{MaiBra19}
C.~Mailler and G.~Uribe~Bravo.
\newblock Random walks with preferential relocations and fading memory: a study
  through random recursive trees.
\newblock {\em Journal of Statistical Mechanics: Theory and Experiment},
  2019(9):093206, 2019.

\bibitem{PainSen21}
M.~Pain and D.~S{\'e}nizergues.
\newblock Correction terms for the height of weighted recursive trees.
\newblock {\em The Annals of Applied Probability}, 32(4):3027--3059, 2022.

\bibitem{PainSen22}
M.~Pain and D.~S{\'e}nizergues.
\newblock Height of weighted recursive trees with sub-polynomially growing
  total weight.
\newblock {\em arXiv preprint arXiv:2204.05908}, 2022.

\bibitem{Res13}
S.~I. Resnick.
\newblock {\em Extreme values, regular variation and point processes}.
\newblock Springer Series in Operations Research and Financial Engineering.
  Springer, New York, 2008.
\newblock Reprint of the 1987 original.

\bibitem{Sen19}
D.~S{\'e}nizergues.
\newblock Geometry of weighted recursive and affine preferential attachment
  trees.
\newblock {\em Electronic Journal of Probability}, 26:1--56, 2021.

\end{thebibliography}
	
	\appendix
	\section{}\label{sec:appendix}
	
	\begin{lemma}\label{lemma:expasymp}
		Consider the same definitions and assumptions as in Proposition~\ref{prop:deglocwrt}. We provide the asymptotic value of $\P{\Zm_n(v_1)\geq d, v_1>\ell}$ under several assumptions on the distribution of $W$ and a parametrisation of $\ell$ in terms of $d$. In all cases we let $d$ diverge as $n\to\infty$. We first set, for $x\in\R$, 
		\be \label{eq:elldef}
		\ell:=n\exp(-(1-\theta^{-1})d+x\sqrt{(1-\theta^{-1})^2d}).
		\ee
		We now distinguish between the different cases:
		
		When $W$ has a distribution that satisfies Conditions~\ref{item:c1} and~\ref{item:c2} of Assumption~\ref{ass:weights}, 
			\be \label{eq:genasymp}
			\P{\Zm_n(v_1)\geq d,v_1\geq \ell}=\E{\Big(\frac{W}{\theta-1+W}\Big)^d}(1-\Phi(x))(1+o(1))=p_{\geq d}(1-\Phi(x))(1+o(1)).
			\ee 
			Furthermore, let $W$ satisfy the~\ref{ass:gamma} case of Assumption~\ref{ass:weights} for some $b\in\R,c_1>0,\tau\in[1,2)$ such that $b\leq 0$ when $\tau>1$ and $bc_1\leq 1$ when $\tau=1$, set $\gamma:=1/(\tau+1)$, and define, for $x\in\R$ and with $K_{\theta,c_1,\tau}$ as in~\eqref{eq:c},
			\be \label{eq:elldef2}
			\ell:=n\exp(-(1-\theta^{-1})(d+K_{\theta,c_1,\tau}d^{1-\gamma})+x\sqrt{(1-\theta^{-1})^2d}).
			\ee 
			Then,
			\be \label{eq:asympgamma2}
			\P{\Zm_n(v_1)\geq d, v_1\geq \ell}= \mathbb E\bigg[\Big(\frac{W}{\theta-1+W}\Big)^{d}\bigg](1-\Phi(x))(1+o(1))=p_{\geq d}(1-\Phi(x))(1+o(1)).
			\ee 
	\end{lemma}
	
	\begin{remark}\label{rem:expasymp}
		$(i)$ For $k>1$ and with $(d_i,\ell_i)_{i\in[k]}$ satisfying the assumptions of Proposition~\ref{prop:deglocwrt}, it follows that 
		\be
		\P{\Zm_n(v_i)\geq d_i, v_i>\ell_i, i\in[k]}=(1+o(1))\prod_{i=1}^k \P{\Zm_n(v_i)\geq d_i, v_i>\ell_i}, 
		\ee  		
		so that the result of Lemma~\ref{lemma:expasymp} can immediately be extended to the case $k>1$ as well with $\ell_i=n\exp(-(1-\theta^{-1})d_i+x_i\sqrt{(1-\theta^{-1})^2d_i})$ and $(x_i)_{i\in[k]}\in\R^k$ (and a similar adaptation for~\eqref{eq:elldef2}).
		
		$(ii)$ With only minor modifications to the proof, we can show that in all cases of Lemma~\ref{lemma:expasymp}, 
		\be 
		\P{\Zm_n(v_1)=d, v_1>\ell}=(1-\theta^{-1})\P{\Zm_n(v_1)\geq d, v_1>\ell}(1+o(1)), 
		\ee  
		is satisfied. This holds in the case of $k$ vertices, as in point $(i)$, as well.
	\end{remark}
	
	A direct corollary of Lemma~\ref{lemma:expasymp} is that we can obtain several precise asymptotic expressions for $\P{\Zm_n(v_1)\geq d,v_1\geq \ell}$ for particular choices of the random variable $W$, whose distribution either satisfies Conditions~\ref{item:c1} and~\ref{item:c2}, or the~\ref{ass:gamma} case, and for which we have a precise asymptotic expression for $p_{\geq d}$. The asymptotics follow from combining Lemma~\ref{lemma:expasymp} with Theorem~\ref{thrm:pkasymp}.
		
		\begin{corollary} \label{cor:expasymp}
			When $W$ satisfies the~\ref{ass:weightatom} case for some $q_0\in(0,1]$, and with $\ell $ as in~\eqref{eq:elldef},
			\be\label{eq:asympatom}
			\P{\Zm_n(v_1)\geq d, v_1>\ell}=q_0\theta^{-d}(1-\Phi(x))(1+o(1)).
			\ee 
			When $W$ satisfies the~\ref{ass:beta} case for some $\alpha,\beta>0$, and with $\ell $ as in~\eqref{eq:elldef},  
			\be \label{eq:asympbeta}
			\P{\Zm_n(v_1)\geq d, v_1>\ell}=\frac{\Gamma(\alpha+\beta)}{\Gamma(\alpha)(1-\theta^{-1})^{\beta}}d^{-\beta}\theta^{-d}(1-\Phi(x))(1+o(1)).
			\ee 
			When $W$ satisfies the~\ref{ass:gamma} case for $\tau=1$ and some $b\in\R,c_1>0$ such that $bc_1\leq 1$, and with $\ell$ as in~\eqref{eq:elldef2},
			\be
			\P{\Zm_n(v_1)\geq d, v_1\geq \ell}= Cd^{b/2+1/4}\e^{-2\sqrt{c_1^{-1}(1-\theta^{-1})d}}\theta^{-d}(1-\Phi(x))(1+o(1)), 
			\ee 
			where $C$ is as in~\eqref{eq:c}.
		\end{corollary}
	
	\begin{remark}
		By the parametrisation of $\ell$, the event $\{v_1>\ell\}$ is equivalent to \be 
		\Big\{\frac{\log v_1-(\log n-(1-\theta^{-1})d_i)}{\sqrt{(1-\theta^{-1})^2d_i}}\in (x,\infty)\Big\}.
		\ee
		As a result, we can rewrite e.g.\ \eqref{eq:asympatom} as 
		\be 
		\P{\Zm_n(v_1)\geq d,\frac{\log v_1-(\log n-(1-\theta^{-1})d_i)}{\sqrt{(1-\theta^{-1})^2d_i}}\in (x,\infty)}=q_0\theta^{-d}\Phi((x,\infty))(1+o(1)), 
		\ee 
		and it can, in fact, be generalised to any set $A\in\CB(\R)$ rather than just $(x,\infty)$ with $x\in \R$. A similar notational change can be made in~\eqref{eq:asympbeta},~\eqref{eq:asympgamma2}, and~\eqref{eq:genasymp}.
	\end{remark}

	\begin{proof} [Proof of Lemma~\ref{lemma:expasymp}]
		We first observe that for our choice of $\ell$ (both as in~\eqref{eq:elldef} and~\eqref{eq:elldef2}), the conditions on $\ell$ in Proposition~\ref{prop:deglocwrt} are met (for $n$ sufficiently large) since $d$ diverges with $n$. By Proposition~\ref{prop:deglocwrt}, we thus have the bounds 
		\be \ba 
		\mathbb P({}&\Zm_n(v_1)\geq d, v_1>\ell)\leq(1+o(1)) \E{\Big(\frac{W}{\theta-1+W}\Big)^{d}\Pf{X<\Big(1+\frac{W}{\theta-1}\Big)\log(n/\ell)}},\\
		\mathbb P({}&\Zm_n(v_1)\geq d, v_1>\ell)\geq(1+o(1))\E{\Big(\frac{W}{\theta-1+W}\Big)^{d}\Pf{\wt X<\Big(1+\frac{W}{\theta-1}\Big)\log(n/\ell)}},
		\ea \ee 
		where $X\sim\text{Gamma}(d+1,1), \wt X\sim\text{Gamma}(d+\lfloor d^{1/4}\rfloor +1, 1)$. To prove the desired results, it suffices to provide an asymptotic expression for the expected values on the right-hand side. We do this for the expected value in the upper bound; the proof for the other expected value follows similarly. 
		
		We use the following approach to prove~\eqref{eq:genasymp}. To obtain an upper bound, we use that $W\leq 1$ almost surely in the conditional probability, which yields
			\be 
			\P{\Zm_n(v_1)\geq d,v_1>\ell}\leq (1+o(1))\E{\Big(\frac{W}{\theta-1+W}\Big)^{d}}\P{X<\frac{\theta}{\theta-1}\log(n/\ell)}, 
			\ee 
			so that it remains to prove that the probability converges to $1-\Phi(x)$. By the parametrisation of $\ell$, it follows that 
			\be \label{eq:cltprob}
			\P{X<\frac{\theta}{\theta-1}\log(n/\ell)}=\P{X<d-x\sqrt{d}}=\P{\frac{X-\E X}{\sqrt{\Var(X)}}\leq \frac{d-x\sqrt{d}-\E X}{\sqrt{\Var(X)}}}.
			\ee 
			As $X$ can be viewed as a sum of $d+1$ i.i.d.\ rate one exponential random variables, the central limit theorem can be applied to the left-hand side in the final probability. Moreover, as $\E X=d+1$ and $\Var(X)=d+1$, it follows that the limit equals $1-\Phi(x)$, as desired.
			
			To obtain a lower bound, we take some sequence $t_d\geq 1$ that tends to infinity with $d$ (and hence with $n$). We then bound
			\be \ba \label{eq:degproblb}
			\mathbb P({}&\Zm_n(v_1)\geq d, v_1>\ell)\\
			&\geq (1+o(1))\E{\Big(\frac{W}{\theta-1+W}\Big)^{d}\ind_{\{1-1/t_d\leq W\leq 1\}}}\P{X<\frac{\theta}{\theta-1}\Big(1-\frac{1}{\theta t_d}\Big)\log(n/\ell)}.
			\ea \ee 
			We can write the probability as 
			\be 
			\P{X<d-x\sqrt{d}-(d-x\sqrt{d})/(\theta t_d)}.
			\ee 
			Hence, with the same steps as in~\eqref{eq:cltprob} we arrive at the same limit $1-\Phi(x)$ whenever $\sqrt{d}/t_d=o(1)$. So, let us set $t_d=d^\beta$ for some $\beta\in(1/2,1/(1+\tau))$. We observe that this interval is non-empty since $\tau\in(0,1)$. It remains to show that for this choice of $t_d$, the expected value on the right-hand side of~\eqref{eq:degproblb} with the indicator is asymptotically equal to the same expected value when the indicator is omitted. Equivalently, we require that  
			\be \label{eq:neg}
			\E{\Big(\frac{W}{\theta-1+W}\Big)^{d}\ind_{\{ W\leq1-1/t_d\}}}=o\bigg(\E{\Big(\frac{W}{\theta-1+W}\Big)^{d}}\bigg).
			\ee 
			To prove this, we bound the expected value on the left-hand side from above and the one on the right-hand side from below. We start with the former. Since $x\mapsto x/(\theta-1+x)$ is increasing on $(0,1]$, we directly have that 
			\be \label{eq:truncexpub}
			\E{\Big(\frac{W}{\theta-1+W}\Big)^{d}\ind_{\{ W\leq1-1/t_d\}}}\leq \Big(\frac{1-1/t_d}{\theta-1/t_d}\Big)^d\leq \exp(-(1-\theta^{-1})d/t_d)\theta^{-d}.
			\ee 
			To bound the other expected value from below, we let $\wt t_d:=t^{\wt\beta}$ for some $\wt\beta>\beta$. As $x\mapsto x/(\theta-1+x)$ is increasing on $(0,1)$, we obtain the lower bound
			\be\label{eq:inttrans}
			\E{\Big(\frac{W}{\theta-1+W}\Big)^d}\geq \E{\Big(\frac{W}{\theta-1+W}\Big)^d\ind_{\{W\geq 1-1/\wt t_d\}}}\geq  \Big(\frac{1-1/\wt t_d}{\theta-1/\wt t_d}\Big)^d\P{ W\geq 1-1/\wt t_d}
			\ee 
			Now using Condition~\ref{item:c2} from Assumption~\ref{ass:weights} yields for $n$ sufficiently large the lower bound 
			\be 
			\Big(\frac{1-1/\wt t_d}{\theta-1/\wt t_d}\Big)^d a \exp\big(-c_1 \wt t_d^\tau\big).
			\ee 
			We then bound 
			\be  
			\Big(\frac{1-1/\wt t_d}{\theta-1/\wt t_d}\Big)^d
			=
			\theta^{-d}\Big(1-\frac{\theta-1}{\wt t_d\theta-1}\Big)^d= \theta^{-d}\exp(-(1-\theta^{-1})d/\wt t_d+\cO(d/\wt t_d^2)).
			\ee  
			Combined, we obtain the lower bound
			\be \label{eq:finlb}
			\E{\Big(\frac{W}{\theta-1+W}\Big)^{d}}\geq a\exp(-(1-\theta^{-1})d/\wt t_d-c_1\wt t_d^\tau+\cO(d/\wt t_d^2))\theta^{-d}.
			\ee 
			The upper bound in~\eqref{eq:truncexpub} is negligible compared to this lower bound when $d/\wt t_d=o(d/t_d)$ and $\wt t_d^\tau=o(d/t_d)$. That is, we require that $\beta<\wt \beta$ and $\wt\beta\tau<1-\beta$. Such a $\wt\beta$ can be found since $\beta<1/(1+\tau)$. As a result, the claim in~\eqref{eq:neg} follows, which results in the desired lower bound and finishes the proof of~\eqref{eq:genasymp}.
			
		Finally, we prove~\eqref{eq:asympgamma2}, that is, when $W$ satisfies~\eqref{eq:gumbex} for some $b\in\R, c_1>0$ and $\tau\in[1,2)$ such that $b\leq 0$ if $\tau>1$ and $bc_1\leq 1$ if $\tau=1$. Set $\gamma:=1/(\tau+1)$. Note that this distribution does not satisfy Condition~\ref{item:c2} in Assumption~\ref{ass:weights}. The behaviour here is different, since the main contribution to the expected value $\E{(W/(\theta-1+W))^d}$ comes from $W=1-Kd^{-\gamma}$ for $K$ a positive constant. At the same time, for $W=1-Kd^{-\gamma}$,
		\be 
		\Pf{X\leq \Big(1+\frac{W}{\theta-1}\Big)\log(n/\ell)}=\P{X\leq \frac{\theta}{\theta-1}\Big(1-\frac{K}{d^{\gamma}}\Big)\log (n/\ell)}
		\ee
		no longer converges to the tail of a standard normal distribution when $\ell$ is as in~\eqref{eq:elldef}, as the $\log(n/\ell)/d^\gamma$ term is of the same order as the variance of $X$ when $\tau=1$ and of higher order when $\tau>1$. As a result, we need $\ell$ to be as in~\eqref{eq:elldef2}.
		
		To be able to obtain the desired result, we first need a lower bound for $p_{\geq d}$ when $\tau>1$ (for $\tau=1$ this is already provided in Theorem~\ref{thrm:pkasymp}). With similar steps as in~\eqref{eq:inttrans} through~\eqref{eq:finlb} and with $t_d=(c_1^\tau(1-\theta^{-1})d/\tau)^\gamma$, we obtain for some constants $K,\wt K>0$,
		\be \ba \label{eq:improvelb}
		\E{\Big(\frac{W}{\theta-1+W}\Big)^{d}}&\geq \theta^{-d} \exp(-(1-\theta^{-1})d/t_d-(t_d/c_1)^\tau-K d/t_d^2)\\
		&=\theta^{-d}\exp\bigg(-\frac{\tau^\gamma}{1-\gamma}\Big(\frac{(1-\theta^{-1})d}{c_1}\Big)^{1-\gamma}-\wt K d^{1-2\gamma}\bigg),
		\ea \ee 
		We now aim to find an upper and lower bound for 
		\be 
		\mathbb E\bigg[\Big(\frac{W}{\theta-1+W}\Big)^{d}\Pf{X<\Big(1+\frac{W}{\theta-1}\Big)\log(n/\ell)}\bigg].
		\ee 
		We start with an upper bound. We let $\eps\in(0,1)$ fixed (when $\tau=1$) or set $\eps=\eps(d)=K_1d^{-\gamma/2}$ for some large constant $K_1$ (when $\tau>1$). We then bound 
		\be \ba\label{eq:firstbound} 
		\mathbb E\bigg[{}&\Big(\frac{W}{\theta-1+W}\Big)^{d}\Pf{X<\Big(1+\frac{W}{\theta-1}\Big)\log(n/\ell)}\bigg]\\
		\leq{}& \E{\Big(\frac{W}{\theta-1+W}\Big)^{d}\ind_{\{1-(1-\eps)/t_d<W<1\}}}\\
		&+\E{\Big(\frac{W}{\theta-1+W}\Big)^{d}}\P{X\leq \frac{\theta}{\theta-1}\Big(1-\frac{1-\eps}{\theta t_d}\Big)\log(n/\ell)}.
		\ea \ee  		
		We then show that the first expected value on the right-hand side is negligible compared to the second, and that the probability has a non-zero limit. We start with the expected value. By the distribution of $W$ as in~\eqref{eq:gumbex}, we find  
		\be \ba 
		\E{\Big(\frac{W}{\theta-1+W}\Big)^{d}\ind_{\big\{1-\frac{1-\eps}{t_d}<W<1\big\}}}&= \int_{1-(1-\eps)/t_d}^1 \!\!\!\!\!\!\!\!\!\! d(\theta-1)\frac{x^{d-1}}{(\theta-1+x)^{d+1}}(1-x)^{-b}\e^{-(x/(c_1(1-x)))^\tau}\,\d x\\
		&\leq (1+o(1))d\int_{t_d/(1-\eps)}^\infty \Big(\frac{1-1/y}{\theta-1/y}\Big)^d y^{b-2}\e^{-((y-1)/c_1)^\tau}\,\d y.
		\ea\ee 
		In the last step, we used that $x^{-1}=1+o(1)$ for $x\in(1-(1-\eps)/t_d,1)$, that $(\theta-1)/(\theta-1+x)\leq 1$, as well as a variable transformation $x=1-1/y$. We now introduce the function $f:(0,1)\to (0,1)$, with $f(\eps)=1/2+(1/2)(1+\tau\eps)(1-\eps)^\tau$. Since, for all $\eps>0$ sufficiently small, $(1+\tau\eps)(1-\eps)^\tau=1-\eps^2\tau(\tau+1)/2+o(\eps^2)<1$, this function satisfies 
		\be \label{eq:fprop}
		f(\eps)>(1+\tau\eps)(1-\eps)^\tau>(1-\eps)^{\tau+1}, \quad\text{and}\quad f(\eps)<1,\qquad \text{for all }\eps\in(0,1).
		\ee 
		We then observe that, for any $b\in\R$, we can bound $y^b\e^{-((y-1)/c_1)^\tau}\leq \e^{-f(\eps)((y-1)/c_1)^\tau}$ for all $y>t_d/(1-\eps)$ when $n$ is sufficiently large, since $f(\eps)<1$ holds (note that this upper bound holds for $\eps>0$ fixed and also for $\eps=K_1d^{-\gamma/2}$ and any constant $K_1>0$ when $\tau>1$). A bound similar to~\eqref{eq:truncexpub} also yields
		\be \label{eq:fracbound2}
		\Big(\frac{1-1/y}{\theta-1/y}\Big)^d\leq \theta^{-d}\exp\Big(-(1-\theta^{-1})\frac{d}{y-1}+(1-\theta^{-1})^2\frac{d}{(y-1)^2}\Big).
		\ee 
		Combining both bounds and using that $(1-\theta^{-1})^2d/(y-1)^2\leq Cd^{1-2\gamma}$ for $y>t_d/(1-\eps)$ and some constant $C>0$ yields the upper bound
		\be \label{eq:intub0}
		Kd\theta^{-d}\int_{t_d/(1-\eps)}^\infty y^{-2}\exp\Big(-(1-\theta^{-1})\frac{d}{y-1}-f(\eps)\Big(\frac{y-1}{c_1}\Big)^\tau+Cd^{1-2\gamma}\Big)\,\d y,
		\ee 
		where $K>0$ is a large constant. The exponential is decreasing in $y$ for all $y>1+t_df(\eps)^{-\gamma}$. By the first inequality in~\eqref{eq:fprop}, it thus follows that the exponential in the integral is maximised for $y=t_d/(1-\eps)>1+t_df(\eps)^{-\gamma}$. As a result, we obtain the upper bound
		\be \ba\label{eq:intub}
		K{}&d\theta^{-d}\exp\Big(-(1-\theta^{-1})(1-\eps)\frac{d}{t_d}-f(\eps)\Big(\frac{t_d}{c_1(1-\eps)}\Big)^\tau+C'd^{1-2\gamma}\Big)\\
		={}&Kd\theta^{-d}\exp\Bigg(-\Big(1-\eps+\frac{f(\eps)}{\tau(1-\eps)^\tau}\Big)\tau^\gamma \Big(\frac{(1-\theta^{-1})d}{c_1}\Big)^{1-\gamma}+C'd^{1-2\gamma}\Bigg).
		\ea \ee 
		Here we change the constant $C$ to a constant $C'>C$, since 
		\be \label{eq:const}
		\frac{d}{t_d/(1-\eps)-1}+f(\eps)\Big(\frac{t_d/(1-\eps)-1}{c_1}\Big)^\tau=(1-\eps)\frac{d}{t_d}+f(\eps)\Big(\frac{t_d}{c_1(1-\eps)}\Big)^\tau+\cO(d^{1-2\gamma}).
		\ee 
		We have that $1-\eps+f(\eps)/(\tau(1-\eps)^\tau)>1+1/\tau=1/(1-\gamma)$ for all $\eps\in(0,1)$ by the first inequality in~\eqref{eq:fprop}. Thus, the lower bound in~\eqref{eq:improvelb} yields that for any $\eps>0$ fixed,
		\be \label{eq:expneg}
		\E{\Big(\frac{W}{\theta-1+W}\Big)^{d}\ind_{\{1-(1-\eps)/t_d<W<1\}}}=o\bigg(\E{\Big(\frac{W}{\theta-1+W}\Big)^{d}}\bigg),
		\ee  
		Whilst this holds for all $\tau\in[1,2)$, we need a stronger statement for $\tau\in(1,2)$, namely that~\eqref{eq:expneg} is true with $\eps=K_1d^{-\gamma/2}$ (which does not hold for $\tau=1$). We stress that all the above steps also hold with this choice of $\eps$ as well. Additionally, a Taylor expansion yields that
		\be 
		1-\eps+\frac{f(\eps)}{\tau(1-\eps)^\tau}=\frac{1}{1-\gamma}+\frac{\tau+1}{4}\eps^2(1+o(1))>\frac{1}{1-\gamma}+\frac{\tau+1}{8}\eps^2, \qquad \text{as }\eps\downarrow 0.
		\ee 
		Using this in~\eqref{eq:intub}, we obtain 
		\be \ba
		\mathbb E \bigg[{}&\Big(\frac{W}{\theta-1+W}\Big)^{d}\ind_{\{1-(1-\eps)/t_d<W<1\}}\bigg]\\
		&\leq K\theta^{-d}\exp\Bigg(-\frac{\tau^\gamma}{1-\gamma} \Big(\frac{(1-\theta^{-1})d}{c_1}\Big)^{1-\gamma}+\Big(C'-K_1^2\frac{(\tau+1)\tau^\gamma}{8}\Big(\frac{(1-\theta^{-1})}{c_1}\Big)^{1-\gamma}\Big)d^{1-2\gamma}\Bigg)\\
		&=\theta^{-d}\exp\Bigg(-\frac{\tau^\gamma}{1-\gamma} \Big(\frac{(1-\theta^{-1})d}{c_1}\Big)^{1-\gamma}-\wt K_1d^{1-2\gamma}(1+o(1))\Bigg),
		\ea \ee 
		where the constant $\wt K_1$ is positive for all large $K_1$ and grows polynomially in $K_1$. Again using the lower bound in~\eqref{eq:improvelb} implies that we need to choose $K_1$ sufficiently large, so that $\wt K_1>\wt K$. This then implies that~\eqref{eq:expneg} holds for $\tau>1$ with $\eps=K_1d^{-\gamma/2}$ as well.
		
		We now determine the limit of the probability on the right-hand side of~\eqref{eq:firstbound}. We again distinguish between the two cases $\tau=1$ and $\tau>1$ and start with the former. First, observe that $d^{1-\gamma}=\sqrt{d}$ when $\tau=1$. Then, since $\E X=\Var(X)=d+1$ and $\ell$ is as in~\eqref{eq:elldef2}, for a fixed $\eps>0$, 
		\be \ba \label{eq:probeq}
		\mathbb P{}&\bigg(X\leq \frac{\theta}{\theta-1}\Big(1-\frac{1-\eps}{\theta t_d}\Big)\log\Big(\frac n\ell\Big)\bigg)\\
		&=\P{\frac{X-\E X}{\sqrt{\Var(X)}}\leq \frac{(K_{\theta,c_1,1}-x)\sqrt{d}-1}{\sqrt{d+1}}-\frac{(1-\eps)(d+(K_{\theta,c_1,1}-x)\sqrt{d})}{\theta t_d\sqrt{d+1}}}.
		\ea\ee 
		As $t_d=\sqrt{c_1(1-\theta^{-1})d}$ when $\tau=1$ and with $Z\sim \cN(0,1)$, this equals 
		\be \ba\label{eq:normlim}
		\P{Z\leq K_{\theta,c_1,1}-x-(1-\eps)K_{\theta,c_1,1}}+o(1)=1-\Phi( x-\eps K_{\theta,c_1,1})+ o(1). 
		\ea \ee 
		Combining this with~\eqref{eq:expneg} in~\eqref{eq:firstbound} yields for $\tau=1$ and any $\eps>0$ fixed,
		\be \ba\label{eq:asympub}
		\mathbb E\bigg[{}&\Big(\frac{W}{\theta-1+W}\Big)^{d}\Pf{X<\Big(1+\frac{W}{\theta-1}\Big)\log(n/\ell)}\bigg]\\
		&\leq \mathbb E\bigg[\Big(\frac{W}{\theta-1+W}\Big)^{d}\bigg](1-\Phi(x-\eps K_{\theta,c_1,1}))(1+o(1)).
		\ea \ee 
		When $\tau\in(1,2)$ we adapt~\eqref{eq:probeq} and~\eqref{eq:normlim} with $\eps=K_1d^{-\gamma/2}$ to obtain 
		\be \ba\label{eq:gammaclt0}
		\mathbb P{}&\bigg(X\leq \frac{\theta}{\theta-1}\Big(1-\frac{1-\eps}{\theta t_d}\Big)\log\Big(\frac n\ell\Big)\bigg)\\
		&=\P{\frac{X-\E X}{\sqrt{\Var(X)}}\leq \frac{K_{\theta,c_1,1}d^{1-\gamma}-x\sqrt{d}-1}{\sqrt{d+1}}-\frac{(1-K_1d^{-\gamma/2})(d+(K_{\theta,c_1,1}-x)\sqrt{d})}{\theta t_d\sqrt{d+1}}}.
		\ea\ee 
		We observe that $d/(\theta t_d)=K_{\theta,c_1,1}d^{1-\gamma}$, so that the right-hand side can be simplified as
		\be \label{eq:gammaclt}
		\P{\frac{X-\E X}{\sqrt{\Var(X)}}\leq -x+o(1)+\cO(d^{1/2-3\gamma/2})}=1-\Phi(x)+o(1).
		\ee 
		Here, the last step follows from the fact that $\cO(d^{1/2-3\gamma/2})=o(1)$ when $\tau<2$ since $1/2-3\gamma/2<0$. We also stress that this is possible \emph{only} when $\eps$ tends to zero with $d$. If $\eps$ were fixed, this would yield a limit of one rather than $1-\Phi(x)$.
		
		Combining this with~\eqref{eq:expneg} when $\tau>1$ and $\eps=K_1d^{-\gamma}$, yields
		\be \ba\label{eq:asympub2}
		\mathbb E\bigg[{}&\Big(\frac{W}{\theta-1+W}\Big)^{d}\Pf{X<\Big(1+\frac{W}{\theta-1}\Big)\log(n/\ell)}\bigg]\\
		&\leq \mathbb E\bigg[\Big(\frac{W}{\theta-1+W}\Big)^{d}\bigg](1-\Phi(x))(1+o(1)).
		\ea \ee 
		In a similar way, we construct a matching lower bound (up to error terms). Namely, for $\eps\in(0,1)$,
		\be \ba\label{eq:pknormlb}
		\mathbb E\bigg[{}&\Big(\frac{W}{\theta-1+W}\Big)^{d}\Pf{ X<\Big(1+\frac{W}{\theta-1}\Big)\log(n/\ell)}\bigg]\\
		&\geq \mathbb E\bigg[\Big(\frac{W}{\theta-1+W}\Big)^{d}\ind_{\{1-(1+\eps)/t_d<W<1\}}\bigg]\P{ X<\frac{\theta}{\theta-1}\Big(1-\frac{1+\eps}{\theta t_d}\Big)\log(n/\ell)}.
		\ea\ee 
		Again, we let $\eps$ fixed when $\tau=1$ and set $\eps=K_1d^{-\gamma/2}$ for some large constant $K_1$ when $\tau>1$. As in~\eqref{eq:probeq} and~\eqref{eq:normlim}, we have for the probability on the right-hand side that
		\be \label{eq:problblim}
		\P{ X<\frac{\theta}{\theta-1}\Big(1-\frac{1+\eps}{\theta t_d}\Big)\log(n/\ell)}=1-\Phi(x+\eps K_{\theta,c_1,1})+o(1),
		\ee 
		when $\tau=1$ and $\eps>0$ is fixed, and similar to~\eqref{eq:gammaclt0} and~\eqref{eq:gammaclt}, 
		\be \label{eq:problim2}
		\P{ X<\frac{\theta}{\theta-1}\Big(1-\frac{1+K_1d^{-\gamma/2}}{\theta t_d}\Big)\log(n/\ell)}=1-\Phi(x)+o(1), 
		\ee  
		when $\tau\in(1,2)$ and $\eps=K_1d^{-\gamma/2}$. It remains to bound the expected value on the right-hand side of~\eqref{eq:pknormlb}. We instead consider the expected value
		\be 
		\mathbb E\bigg[\Big(\frac{W}{\theta-1+W}\Big)^{d}\ind_{\big\{0<W<1-\frac{1+\eps}{t_d}\big\}}\bigg]=\int_0^{1-(1+\eps)/t_d}\frac{d(\theta-1)x^{d-1}}{(\theta-1+x)^{d+1}}(1-x)^{-b}\e^{-(x/(c_1(1-x)))^\tau}\,\d x.
		\ee 
		We first bound $(1-x)^{-b}\leq t_d^{b\vee 0}$ and $(\theta-1)/(\theta-1+x)\leq 1$, and split the integral in two parts by dividing the integration range into $(0,1-2(1+\eps)/t_d)$ and $(1-2(1+\eps)/t_d,1-(1+\eps)/t_d)$. This yields the upper bound
		\be 
		\frac{dt_d^{b\vee 0}}{\theta-1}\int_0^{1-2(1+\eps)/t_d} \Big(\frac{x}{\theta-1+x}\Big)^{d-1}\,\d x+2dt_d^{b\vee 0} \int_{1-2(1+\eps)/t_d}^{1-(1+\eps)/t_d} \Big(\frac{x}{\theta-1+x}\Big)^d \e^{-(x/(c_1(1-x)))^\tau}\, \d x.
		\ee  
		Using that $x\mapsto x/(\theta-1+x)$ is increasing on $(0,1)$ and using a variable transformation $x=1-1/y$ in the second integral, yields the upper bound
		\be 
		\frac{dt_d^{b\vee 0}(\theta+o(1))}{\theta-1}\Big(\frac{1-2(1+\eps)/t_d}{\theta-2(1+\eps)/t_d}\Big)^d+2dt_d^{b\vee 0} \int_{t_d/(2(1+\eps))}^{t_d/(1+\eps)} y^{-2}\Big(\frac{1-1/y}{\theta-1/y}\Big)^d \e^{-((y-1)/c_1)^\tau}\, \d y.
		\ee 
		We now use~\eqref{eq:fracbound2} and steps similar to those that yielded~\eqref{eq:intub0}. We can then bound this from above by 
		\be\ba 
		K{}&dt_d^{b\vee 0}\theta^{-d}\exp\Big(-(1-\theta^{-1})\frac{2(1+\eps)d}{t_d}\Big)  \\
		&+dt_d^{b\vee 0}\theta^{-d}\!\int_{t_d/(2(1+\eps))}^{t_d/(1+\eps)}y^{-2}\exp\Big(-(1-\theta^{-1})\frac{d}{y-1}-\Big(\frac{y-1}{c_1}\Big)^\tau+(1-\theta^{-1})^2\frac{d}{(y-1)^2}\Big)\,\d y,
		\ea\ee 
		for some constant $K>0$. As $2(1+\eps)>1/(1-\gamma)$ for all $\tau\geq 1$ and any $\eps>0$, it follows from the choice of $t_d$ and the lower bound in~\eqref{eq:improvelb} that the first term is negligible compared to $\E{(W/(\theta-1+W)^d}$ when $\tau>1$ and $\eps=K_1d^{-\gamma/2}$ and also when $\tau=1$ and $\eps$ is fixed. 
		
		We thus focus on the integral only from now on. We bound the final term in the second integral from above by $C_2 d^{1-2\gamma}$ for some constant $C_2>0$. The remainder in the exponent is increasing for $y<1+t_d$. With the same reasoning as in~\eqref{eq:const}, we can bound the integral from above for some $C_2'>C_2$ by 
		\be \label{eq:nofub}
		\theta^{-d}d\exp\bigg(-\Big((1+\eps)+\frac{1}{\tau(1+\eps)^\tau}\Big)\tau^\gamma \Big(\frac{(1-\theta^{-1})d}{c_1}\Big)^{1-\gamma}+C_2'd^{1-2\gamma}\bigg)
		\ee 
		Since $(1+\eps)+\tau^{-1}(1+\eps)^{-\tau}>1/(1-\gamma)$ for any $\eps>0$, it follows from the lower bound in~\eqref{eq:improvelb} that this upper bound is negligible compared to $\E{(W/(\theta-1+W)^d}$ for any $\tau\geq 1$ when $\eps$ is fixed. Combined with ~\eqref{eq:problblim} this yields, for $\tau=1$ and $\eps$ fixed,  
		\be \ba
		\mathbb E\bigg[{}&\Big(\frac{W}{\theta-1+W}\Big)^{d}\Pf{X<\Big(1+\frac{W}{\theta-1}\Big)\log(n/\ell)}\bigg]\\
		&\geq  \mathbb E\bigg[\Big(\frac{W}{\theta-1+W}\Big)^{d}\bigg](1-\Phi( x+\eps K_{\theta,c_1,1}))(1+o(1)).
		\ea\ee 
		Together with~\eqref{eq:asympub}, since $\eps$ can be taken arbitrarily small and by the continuity of $\Phi$, we finally arrive at 
		\be 
		\mathbb E\bigg[\Big(\frac{W}{\theta-1+W}\Big)^{d}\Pf{X<\Big(1+\frac{W}{\theta-1}\Big)\log(n/\ell)}\bigg]= \mathbb E\bigg[\Big(\frac{W}{\theta-1+W}\Big)^{d}\bigg](1-\Phi(x))(1+o(1)),
		\ee  
		which proves~\eqref{eq:asympgamma2} when $\tau=1$.
		
		To obtain the same result for $\tau>1$ with $\eps=K_1d^{-\gamma/2}$, we use a Taylor expansion to find that
		\be 
		(1+\eps)+\tau^{-1}(1+\eps)^{-\tau}=\frac{1}{1-\gamma}+\frac{\tau+1}{2}\eps^2(1+o(1))>\frac{1}{1-\gamma}+\frac{\tau+1}{4}\eps^2, \qquad \text{as }\eps \downarrow 0.
		\ee 
		Using this in~\eqref{eq:nofub} yields, for some constant $\wt K_1>0$,  the upper bound 
		\be 
		\theta^{-d}\exp\bigg(-\frac{\tau^\gamma}{1-\gamma} \Big(\frac{(1-\theta^{-1})d}{c_1}\Big)^{1-\gamma}-\Big(\frac{\tau+1}{4}K_1^2 \tau^\gamma\frac{(1-\theta^{-1})}{c_1}\Big)^{1-\gamma}- C_2'\Big)d^{1-2\gamma}\bigg)
		\ee  
		As in the proof of the upper bound, we conclude that~\eqref{eq:improvelb} implies that choosing $K_1$ large enough yields for $\tau>1$ and $\eps=K_1d^{-\gamma/2}$, 
		\be 
		\mathbb E\bigg[\Big(\frac{W}{\theta-1+W}\Big)^{d}\ind_{\big\{0<W<1-\frac{1+\eps}{t_d}\big\}}\bigg]=o\bigg(\mathbb E\bigg[\Big(\frac{W}{\theta-1+W}\Big)^{d}\bigg]\bigg).
		\ee 
		Combined with~\eqref{eq:problim2} in~\eqref{eq:pknormlb}, we thus arrive at
		\be
		\mathbb E\bigg[\Big(\frac{W}{\theta-1+W}\Big)^{d}\Pf{X<\Big(1+\frac{W}{\theta-1}\Big)\log(n/\ell)}\bigg]\geq  \mathbb E\bigg[\Big(\frac{W}{\theta-1+W}\Big)^{d}\bigg](1-\Phi( x)(1+o(1)),
		\ee 
		Together with~\eqref{eq:asympub2}, this completes the proof of~\eqref{eq:asympgamma2} for $\tau>1$, and concludes the proof.
	\end{proof}	
	
	\begin{lemma}\label{lemma:asymp}
		Consider the same conditions as in Lemma~\ref{lemma:maxdeg}, let $\eps\in(0\vee (c(1-\theta^{-1})-(1-\mu)),\mu)$ and $\wt X\sim\text{Gamma}(d_n+\lfloor d_n^{1/4}\rfloor +1,1)$. Then,
		\be 
		\E{\Big(\frac{W}{\theta-1+W}\Big)^{d_n}\Pf{\wt X\leq \Big(1+\frac{W}{\theta-1}\Big)\log (n^{1-\mu+\eps})}}\geq \E{\Big(\frac{W}{\theta-1+W}\Big)^{d_n}}(1-o(1)).
		\ee   
	\end{lemma}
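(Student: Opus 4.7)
Writing $I_n := \E{(W/(\theta-1+W))^{d_n}}$, $\lambda_W := 1+W/(\theta-1)$, and $f_n := \theta^{d_n} I_n \in (0,1]$, the lemma is equivalent to
\[ E_n := \E{\Big(\frac{W}{\theta-1+W}\Big)^{d_n} \Pf{\wt X > \lambda_W(1-\mu+\eps)\log n}} = o(I_n), \]
since $\E{(W/(\theta-1+W))^{d_n}\Pf{\wt X \leq \lambda_W(1-\mu+\eps)\log n}} = I_n - E_n$. I will estimate $E_n$ by splitting according to whether $W$ lies in the ``bulk'' $[1-1/t_n,1]$ or in the ``tail'' $[0,1-1/t_n)$, for a carefully chosen sequence $t_n \to \infty$. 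The calibration of $t_n$ rests on the \emph{universal} auxiliary estimate $I_n^{1/d_n} \to 1/\theta$, equivalently $|\log f_n| = o(d_n)$: the upper bound $I_n^{1/d_n}\leq 1/\theta$ is immediate from $W/(\theta-1+W) \leq 1/\theta$ a.s., and for the matching lower bound, for any fixed $\delta\in(0,1)$ the essential-supremum-one assumption gives $\P{W\geq 1-\delta}>0$, so $I_n \geq ((1-\delta)/(\theta-\delta))^{d_n}\P{W\geq 1-\delta}$; taking $d_n$-th roots, sending $n\to\infty$ and then $\delta\downarrow 0$ yields $\liminf I_n^{1/d_n} \geq 1/\theta$.

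Set $h_n := |\log f_n| + \log d_n$, so $h_n = o(d_n)$ and $h_n \to \infty$, and define $t_n := (1-\theta^{-1})d_n/(2h_n)$, which satisfies $t_n \to \infty$ and $d_n/t_n^2 = O(h_n^2/d_n) = o(h_n)$. Since $w\mapsto w/(\theta-1+w)$ is increasing, a direct Taylor expansion of $(1-1/t)/(\theta-1/t)$ around $t=\infty$ yields the tail bound
\[ \E{\Big(\frac{W}{\theta-1+W}\Big)^{d_n}\ind_{W<1-1/t_n}} \leq \Big(\frac{1-1/t_n}{\theta-1/t_n}\Big)^{d_n} = \theta^{-d_n}\exp\Big(-\frac{(1-\theta^{-1})d_n}{t_n} + O(d_n/t_n^2)\Big), \]
which by the choice of $t_n$ equals $\theta^{-d_n}\exp(-2h_n + o(h_n)) \leq \theta^{-d_n} e^{-h_n} = I_n/d_n = o(I_n)$ for $n$ large (using $e^{-|\log f_n|} = f_n$ since $f_n \leq 1$).

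For the bulk, $W \geq 1-1/t_n$ gives $\lambda_W \geq \theta/(\theta-1) - 1/((\theta-1)t_n)$, hence
\[ \lambda_W(1-\mu+\eps)\log n \geq (c + \gamma - o(1))\log n \geq d_n + (\gamma/2)\log n \]
for $n$ large, where $\gamma := 1/\log\theta + \theta\eps/(\theta-1) - c > 0$; this is precisely where the hypothesis $\eps > c(1-\theta^{-1}) - (1-\mu)$ enters, as it forces $\gamma > 0$. Since $\wt X$ is a sum of $k_n := d_n + \lfloor d_n^{1/4}\rfloor + 1$ i.i.d.\ $\mathrm{Exp}(1)$ random variables and is independent of $W$, the standard Chernoff bound $\P{\wt X > (1+s)k_n} \leq ((1+s)e^{-s})^{k_n}$, applied with $s := \lambda_W(1-\mu+\eps)\log n/k_n - 1$, which is bounded below by a positive constant uniformly over $W \in [1-1/t_n, 1]$ (both when $d_n = \Theta(\log n)$ and when $d_n = o(\log n)$), gives $\Pf{\wt X > \lambda_W(1-\mu+\eps)\log n} \leq \beta^{k_n}$ for some $\beta\in(0,1)$. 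The bulk contribution is therefore at most $\beta^{k_n} I_n = o(I_n)$, and combined with the tail estimate this yields $E_n = o(I_n)$, as desired.

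The principal obstacle is that the lemma imposes no distributional hypothesis on $W$ beyond boundedness and essential-supremum one, so the distribution-specific decay rates of $I_n$ computed case-by-case in Lemma~\ref{lemma:expasymp} cannot be invoked directly; the universal estimate $|\log f_n| = o(d_n)$ is the key ingredient that permits a choice of $t_n$ simultaneously large enough to make the Chernoff bound uniform in the bulk and small enough to render the tail contribution negligible.
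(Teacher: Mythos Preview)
Your proof is correct and follows essentially the same bulk/tail decomposition as the paper: restrict $W$ to a neighbourhood of $1$, show the complementary contribution is $o(I_n)$ via the estimate $I_n^{1/d_n}\to\theta^{-1}$, and show the gamma tail probability vanishes on the bulk using that $\eps>c(1-\theta^{-1})-(1-\mu)$ forces the threshold to exceed $d_n$ by a constant multiple of $\log n$.

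The differences are only technical. The paper works with a \emph{fixed} window $(1-\delta,1]$ and replaces $\wt X$ by the stochastically larger $\hat X\sim\Gamma(c\log n+\lfloor(c\log n)^{1/4}\rfloor+1,1)$, after which the law of large numbers immediately gives $\P{\hat X\leq(1+(1-\delta)/(\theta-1))(1-\mu+\eps)\log n}\to1$; the tail estimate then follows by citing \cite[Lemma~5.5]{EslLodOrt21} for $I_n\geq(\theta+\xi)^{-d_n}$. Your version instead takes a shrinking window $[1-1/t_n,1]$ with a data-dependent $t_n$ calibrated via $h_n=|\log(\theta^{d_n}I_n)|+\log d_n$, applies Chernoff directly to $\wt X$, and proves the $(\theta+\xi)^{-d_n}$ bound from scratch. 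Your argument is self-contained and avoids the external citation, at the cost of a slightly more intricate choice of $t_n$; the paper's fixed-$\delta$ route is marginally simpler because no calibration is needed. Either way the mechanism is identical.
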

	
	We observe that this result is of a similar nature as~\eqref{eq:genasymp} in Lemma~\ref{lemma:expasymp}. However, as $\ell=n^{\mu-\eps}$ here, rather than the a precise parametrisation in terms of $d_n$ as is the case in Lemma~\ref{lemma:expasymp}, we can make a more general statement here (though not as precise and useful) that does not require Condition~\ref{item:c2} of Assumption~\ref{ass:weights}.
	
	\begin{proof}
		Fix $\delta\in(0,(1-(\theta-1)(c/(1-\mu+\eps)-1)\wedge 1))$. It is readily checked that by the choice of $\eps$, such a $\delta$ exists. We bound the expected value from below by writing
		\be \label{eq:xhat}
		\E{\Big(\frac{W}{\theta-1+W}\Big)^{d_n}\ind_{\{1-\delta<W\leq1\}}}\P{\hat X\leq \Big(1+\frac{1-\delta}{\theta-1}\Big)\log (n^{1-\mu+\eps})},
		\ee 
		where $\hat X\sim\text{Gamma}(c\log n+\lfloor (c\log n)^{1/4}\rfloor +1,1)$, which stochastically dominates $\wt X$ as $d_n\leq c\log n$. It thus remains to prove two things: the probability converges to one, and the expected value is asymptotically equal to $\E{(W/(\theta-1+W))^{d_n}}$. Together, they prove the lemma. We start with the former. By the choice of $\delta$, it follows that 
		\be 
		c_{\delta,\theta,\eps}:=\Big(1+\frac{1-\delta}{\theta-1}\Big)\frac{1-\mu+\eps}{c} >1.
		\ee 
		Thus, as $\hat X/(c\log n)\toas 1$, the probability in~\eqref{eq:xhat} equals $1-o(1)$. It remains to prove that 
		\be 
		\E{\Big(\frac{W}{\theta-1+W}\Big)^{d_n}\ind_{\{1-\delta<W\leq1\}}}=	\E{\Big(\frac{W}{\theta-1+W}\Big)^{d_n}}(1-o(1)),
		\ee 
		which is equivalent to showing that 
		\be \label{eq:expineq}
		\E{\Big(\frac{W}{\theta-1+W}\Big)^{d_n}\ind_{\{W\leq 1-\delta\}}}=o\bigg(	\E{\Big(\frac{W}{\theta-1+W}\Big)^{d_n}}\bigg).
		\ee 
		By Theorem~\ref{thrm:pkasymp}, for any $\xi>0$ and $n$ sufficiently large, 
		\be 
		\E{\Big(\frac{W}{\theta-1+W}\Big)^{d_n}}=p_{\geq d_n}\geq (\theta+\xi)^{-d_n}.
		\ee 
		So, take $\xi\in(0,\delta(\theta-1)/(1-\delta))$. Then, as $x\mapsto x/(\theta-1+x)$ is increasing in $x$, 
		\be 
		\E{\Big(\frac{W}{\theta-1+W}\Big)^{d_n}\ind_{\{W\leq 1-\delta\}}}\leq \Big(\frac{1-\delta}{\theta-\delta}\Big)^{d_n}=\Big(\theta+\frac{\delta(\theta-1)}{1-\delta}\Big)^{-d_n}=o\big((\theta+\xi)^{-d_n}\big),
		\ee 
		so that~\eqref{eq:expineq} follows. Combined with the lower bound on the probability in~\eqref{eq:xhat}, it yields the desired lower bound.
	\end{proof}
	
	\begin{lemma}\label{lemma:littleo}
		Consider the same definitions and assumptions as in Proposition~\ref{prop:deglocwrt} $($but without indices$)$. Let $c:=\limsup_{n\to\infty} d/\log n$ and assume that $c\in[0,\theta/(\theta-1))$. Then, 
		\be
		\frac{1}{n^\gamma}=o\bigg(\E{\frac{\theta-1}{\theta-1+W}\Big(\frac{W}{\theta-1+W}\Big)^{d}\Pf{X<\Big(1+\frac{W}{(\theta-1)}\Big)\log(n/\ell)}}\bigg).
		\ee 
		holds for $\gamma=1$ when $c\in[0,1/(\theta-1)]$ and for $\gamma$ sufficiently large when $c\in(1/(\theta-1),\theta/(\theta-1))$. 
	\end{lemma}
	
	\begin{proof}
		We first consider the case $c\in[0,1/(\theta-1)]$, for which we can set $\gamma=1$. We consider two sub-cases: $(i)$ $d$ is bounded from above, and $(ii)$ $d$ diverges (but $d$ is at most $(1/(\theta-1))\log n(1+o(1))$ for all $n$ large). For $(i)$ we immediately have that 
		\be \label{eq:dtbdd}
		\Pf{X<\Big(1+\frac{W}{(\theta-1)}\Big)\log(n/\ell)}\geq \P{X<\log(n/\ell)}\geq \P{X<(1-\xi)(1-\theta^{-1})(d+1)}, 
		\ee 
		when $n$ is sufficiently large and $\xi$ small, since $\ell\leq n\exp(-(1-\xi)(1-\theta^{-1})(d+1))$ for any $\xi>0$. Since $X$ is finite almost surely for all $n\in\N$ as $d$ is bounded, the probability on the right-hand side is strictly positive. The expected value that remains is again bounded from below by a positive constant, since $d$ is bounded from above. It thus follows that $1/n$ negligible compared to the expected value.
		
		For $(ii)$, we obtain a lower bound by restricting the weight $W$ in the expected value to $(1-\delta,1]$ for some small $\delta>0$. This yields the lower bound
		\be \ba \label{eq:neglb}
		\mathbb E\Bigg[{}&\frac{\theta-1}{\theta-1+W}\Big(\frac{W}{\theta-1+W}\Big)^{d}\Pf{X<\Big(1+\frac{W}{(\theta-1)}\Big)\log(n/\ell)}\ind_{\{W\in(1-\delta,1]\}}\Bigg]\\
		&\geq (1-\theta^{-1})\Big(\frac{1-\delta}{\theta-\delta}\Big)^{d}\P{X<\frac{\theta-\delta}{\theta-1}\log(n/\ell)}\P{W\in(1-\delta,1]}.
		\ea \ee  
		Note that $\P{W\in(1-\delta,1]}$ is strictly positive for any $\delta\in(0,1)$ by Condition~\ref{item:c1}. Furthermore, since $\ell\leq n\exp(-(1-\xi)(1-\theta^{-1})(d+1))$ for any $\xi>0$,  
		\be 
		\frac{\theta-\delta}{\theta-1}\log(n/\ell)\geq (1-\delta/\theta)(1-\xi)(d+1)=:(1-\eps)(d+1).
		\ee 
		Applying this inequality to the probability on the right-hand side of~\eqref{eq:neglb} together with the equivalence between sums of exponential random variables and Poisson random variables via Poisson processes, we conclude that
		\be \label{eq:problb}
		\P{X<\frac{\theta-\delta}{\theta-1}\log\Big(\frac{n}{\ell}\Big)}\geq \P{X<(1-\eps)(d+1)}=\P{P_1\geq d+1}\geq\P{P_1=d+1},
		\ee 
		where $P_1\sim \text{Poi}((1-\eps)(d+1))$. With Stirling's formula this yields 
		\be \ba \label{eq:poiprob}
		\P{P_1=d+1}&=\e^{-(1-\eps)(d+1)}\frac{((1-\eps)(d+1))^{d+1}}{(d+1)!}\\
		&=(1+o(1))\e^{\eps(d+1)}(1-\eps)^{d+1}\frac{1}{\sqrt{2\pi d}}\\
		&=(1+o(1))\frac{(1-\eps)\e^\eps}{\sqrt{2\pi d}}\e^{d(\log(1-\eps)+\eps)},
		\ea \ee
		where we observe that the exponent is strictly negative for any $\eps\in(0,1)$. 
		Finally, combining~\eqref{eq:poiprob} with~\eqref{eq:problb} in~\eqref{eq:neglb} and since $(1-\delta)/(\theta-\delta)\geq (1-\delta)/\theta$, we arrive at the lower bound
		\be \label{eq:ctbound}
		(1+o(1))\frac{(1-\theta^{-1})\P{W\in(1-\delta,1]}(1-\eps)\e^\eps}{\sqrt{2\pi d}}\exp(d(\log(1-\eps)+\eps+\log((1-\delta)/\theta))).
		\ee
		By choosing $\delta$ and $\xi$ (used in the definition of $\eps$) sufficiently small, $\log(1-\eps)+\eps$ can be set arbitrarily close to zero (though negative), and $\log((1-\delta)/\theta)=\log(1-\delta)-\log\theta$ can be set arbitrarily close to (though smaller than) $-\log \theta$. Since $-\log \theta >-(\theta -1)$ and $c\in[0,1/(\theta-1)]$, it follows that for some small $\kappa>0$ and $\delta,\xi$ sufficiently small, that for all $n$ sufficiently large,
		\be \label{eq:1/nneg}
		\frac{1}{\sqrt{d}}\exp(d(\log(1-\eps)+\eps+\log((1-\delta)/\theta)))\geq \exp(-(1-\kappa)\log n)=n^{-(1-\kappa)},
		\ee 
		which, together with~\eqref{eq:neglb} yields the desired result
		
		For the case $c\in(1/(\theta-1),\theta/(\theta-1))$, we use the same approach but now use that $d\leq (\theta/(\theta-1))\log n$ for all $n$ large. We thus obtain the lower bound 
		\be 
		\E{\frac{\theta-1}{\theta-1+W}\Big(\frac{W}{\theta-1+W}\Big)^{d}\Pf{X<\Big(1+\frac{W}{(\theta-1)}\Big)\log(n/\ell)}}\geq \e^{-C d}\geq n^{-C\theta/(\theta-1)}, 
		\ee 
		for some large constant $C>0$. The desired result holds for $\gamma>C\theta/(\theta-1)$, which concludes the proof.
	\end{proof}
	
	\begin{lemma}\label{lemma:sumint}
		Fix $\ell,n\in \N$ such that $\ell<n$. Suppose $f:\R\to \R$ is a positive integrable function,  increasing on $[\ell,x^*]$ and decreasing on $[x^*,n]$, 
		where $x^*\in(\ell,n)$ is not necessarily an integer. Then,
		\be 
		\int_{\ell}^n f(x)\, \d x-f(x^*)\leq \sum_{j = \ell +1}^n f(j) \leq \int_{\ell}^n f(x)\, \d x + f(x^*).
		\ee
	\end{lemma}

	\begin{proof}
		As $f$ is increasing on $[\ell,\lfloor x^* \rfloor]$ and decreasing on $[\lceil x^*\rceil,n]$, we directly have that
		\be \ba 
		\sum_{j=\ell+1}^n f(j)=f(\lceil x^*\rceil)+\sum_{j=\ell+1}^{\lfloor x^*\rfloor} f(j)+\sum_{j=\lceil x^*\rceil+1}^n f(j)\leq f(x^*)+\int_\ell^{\lfloor x^*\rfloor}f(x)\,\d x+\int_{\lceil x^*\rceil}^nf(x)\,\d x.
		\ea \ee 
		The final two terms can be combined into a single integral from $\ell$ to $n$ to yield an upper bound, since $f(x)$ is positive for all $x\in\R$.
		
		For the lower bound, we use an equivalent approach and that 
		\be 
		\int_{\lfloor x^*\rfloor}^{\lceil x^*\rceil}f(x)\,\d x\leq f(x^*),
		\ee  
		to obtain the desired lower bound.
	\end{proof}

\end{document}